\newtheorem{theorem}{Theorem}[section]
\newtheorem{lemma}[theorem]{Lemma}
\newtheorem{proposition}[theorem]{Proposition}
\newtheorem{corollary}[theorem]{Corollary}
\newtheorem{claim}[theorem]{Claim}
\theoremstyle{definition}
\newtheorem{definition}[theorem]{Definition}
\newtheorem{example}[theorem]{Example}
\newtheorem{remark}[theorem]{Remark}
\newtheorem{general remarks}[theorem]{General remarks}
\newtheorem{note}[theorem]{Note}
\newtheorem{terminology}[theorem]{Terminology}
\newtheorem{explanation}[theorem]{Explanation}
\newcommand{\Id}{\operatorname{Id}}
\newcommand{\Par}{\operatorname{Par}}
\newcommand{\ben}{\begin{enumerate}}
\newcommand{\een}{\end{enumerate}}
\begin{document}

\title[Kemer's theorem for affine PI algebras over a field of characteristic zero]
{Kemer's theorem for affine PI algebras over a field of
characteristic zero}

\author{Eli Aljadeff}
\address{Department of Mathematics, Technion-Israel Institute of
Technology, Haifa 32000, Israel}
\email{aljadeff@tx.technion.ac.il}

\author{Alexei Kanel-Belov}
\address{Department of Mathematics, Bar-Ilan University, Ramat-Gan, Israel}
\email{kanel@mccme.ru}

\author{Yaakov Karasik}
\address{Department of Mathematics, Technion-Israel Institute of
Technology, Haifa 32000, Israel}
\email{yaakov@tx.technion.ac.il}
\date{Feb. 15, 2015}





\keywords{PI algebras, polynomial identity}

\thanks {The first author was partially supported by the ISRAEL SCIENCE FOUNDATION
(grant No. 1017/12) and by the E.SCHAVER RESEARCH FUND. The second
author was partially supported by the ISRAEL SCIENCE FOUNDATION
(grant No. 1178/06). The second author is grateful to the Russian Fund of Fundamental
Research for supporting his visit to India in 2008 (grant  $\ RFBR 08-01-91300-IND_a$).}

\begin{abstract} We present a proof of Kemer's representability theorem for affine PI algebras over a field of characteristic zero.

\end{abstract}

\maketitle

\maketitle

\begin{section}{Introduction} \label{Introduction}

The purpose of this article is to provide a proof, as complete as
possible, of the representability theorem for affine PI algebras
over a field of characteristic zero which is due to Kemer.

\begin{theorem}

Let $W$ be an affine PI algebra over a field $F$ of characteristic
zero. Then there exists a field extension $L$ of $F$ and a finite
dimensional algebra $A$ over $L$ which is PI equivalent to $W$
$($i.e. $\Id_{L}(W_{L})=\Id_{L}(A)$, where $W_{L}=W\otimes_{F}L$$)$.
\end{theorem}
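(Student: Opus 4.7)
The plan is to follow Kemer's original strategy via the combinatorial invariants of T-ideals. Since $W$ is affine PI, a theorem of Kemer (based on Shirshov's height theorem) guarantees that $\Id(W)$ contains some Capelli identity $c_n$; this lets us work within the framework of multilinear identities as modules over the symmetric groups $S_m$ and to speak meaningfully of alternations of bounded size.

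The first step is to introduce the \emph{Kemer index} $\Index(\Gamma) = (\alpha,s) \in \BN^{2}$ of a T-ideal $\Gamma$: here $\alpha$ is the largest integer such that there exist multilinear nonidentities modulo $\Gamma$ with arbitrarily many disjoint alternating sets of cardinality $\alpha$, and $s$ is the largest number of additional alternating sets of cardinality $\alpha+1$ that can be added on top while keeping the polynomial a nonidentity. The multilinear nonidentities realizing this extremal shape are the \emph{Kemer polynomials} of $\Gamma$. A direct calculation then shows that if $B = B_{ss} \oplus J$ is the Wedderburn--Malcev decomposition of a finite-dimensional algebra $B$, with $B_{ss} = \prod_{i} \Mat_{d_{i}}(L_{i})$ split and $J^{s_{0}+1}=0$, then $\Index(\Id(B)) = \bigl(\sum_{i} d_{i}^{2},\, s_{0}\bigr)$.

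Setting $(\alpha,s) = \Index(\Id_{L}(W_{L}))$ after passing to a finite splitting extension $L/F$, I would construct a finite-dimensional $L$-algebra $A$ of matching Kemer index by taking a suitable direct sum of generalized upper block-triangular matrix algebras whose semisimple block data and nilpotency index are dictated by $(\alpha,s)$ and whose supply of Kemer polynomials reproduces that of $\Id_{L}(W_{L})$. This arranges that $\Id_{L}(A)$ and $\Id_{L}(W_{L})$ share the same Kemer polynomials and the same Kemer index, and, by a leading-term argument on the Kemer polynomials, one obtains directly that $\Id_{L}(W_{L}) \subseteq \Id_{L}(A)$. The reverse inclusion is then established by induction on the Kemer index, lexicographically ordered: a hypothetical polynomial $f \in \Id_{L}(A) \setminus \Id_{L}(W_{L})$ yields, via the \emph{Phoenix property} described next, a T-ideal of strictly smaller Kemer index to which the induction applies, producing the desired contradiction.

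The crux and main obstacle is the Phoenix property, formalized by Kemer's two lemmas: \emph{existence}, that a T-ideal of Kemer index $(\alpha,s)$ admits Kemer polynomials of maximal shape in unbounded supply; and \emph{propagation}, that such polynomials survive certain natural quotient operations in a controlled way, strictly decreasing the Kemer index. Both rely on a delicate use of symmetric group representation theory together with Shirshov's height theorem; it is precisely the availability of Shirshov height in the affine setting that lets the argument bypass the $\Z/2$-graded Grassmann-envelope machinery required in Kemer's treatment of the general (non-affine) representability theorem.
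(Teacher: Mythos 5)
Your outline reproduces the general skeleton (Capelli identity, Kemer index, Kemer polynomials, induction on the index, Phoenix property), but it contains a concrete false step and hides the actual mathematical core behind a phrase. The false step is your ``direct calculation'' that for every finite dimensional algebra $B=B_{ss}\oplus J$ with $B_{ss}=\prod_i \Mat_{d_i}(L_i)$ and $J^{s_0+1}=0$ one has $\Ind(\Id(B))=(\sum_i d_i^2,\,s_0)$. This is not true: taking $B(r)=B\times\cdots\times B$ ($r$ copies of a simple algebra $B$) gives $\Id(B(r))=\Id(B)$, hence the same Kemer index, while $\sum_i d_i^2$ grows with $r$. In general one only has the inequality $\Ind(B)\leq(\dim_F \overline{B},\,n_B-1)$, and equality holds precisely for the \emph{basic} algebras. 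Proving that basic algebras realize equality is the content of Kemer's Lemmas 1 and 2, which require decomposing any finite dimensional algebra, up to PI equivalence, into a product of algebras that are \emph{full} and satisfy \emph{property $K$}, and then a delicate bordering/alternation argument with semisimple and radical substitutions. Your proposal assumes exactly the statement that this machinery exists to prove, and your description of the Phoenix property (as ``existence'' plus ``propagation under quotients'') is not the actual property needed, which is a statement about regeneration of Kemer polynomials inside the $T$-ideal they generate: if $f$ is Kemer for $\Gamma$ and $f'\in\langle f\rangle$ is not in $\Gamma$, then some $f''\in\langle f'\rangle$ is again Kemer for $\Gamma$.

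The second, larger gap is the construction of the algebra matching $\Gamma=\Id_L(W_L)$. You propose to write down ``a suitable direct sum of generalized upper block-triangular matrix algebras whose semisimple block data and nilpotency index are dictated by $(\alpha,s)$ and whose supply of Kemer polynomials reproduces that of $\Gamma$,'' and to get $\Gamma\subseteq\Id_L(A)$ by a ``leading-term argument.'' There is no reason such a finite dimensional algebra exists with exactly the right nonidentities, and no argument is offered; this is precisely what must be proved. The actual proof does not produce $A$ in one shot from the index data: it first finds a finite dimensional algebra $B$ with $\Id(B)\subseteq\Gamma$ having the same index and the same Kemer polynomials as $\Gamma$ (by repeatedly modding a relatively free algebra by evaluations of Kemer polynomials lying in $\Gamma$), then constructs a representable algebra $B_p$ satisfying $\Id(B_p)\supseteq\Gamma$ on which all Kemer polynomials of $\Gamma$ survive --- this uses Zubrilin--Razmyslov trace identities, an interpretation lemma, integrality of a Shirshov base over a Noetherian polynomial ring, and Beidar's representability theorem --- and finally obtains the answer as $A'\times B_p$, where $A'$ comes from the induction applied to $\Gamma'=\Gamma+S_p$ ($S_p$ the $T$-ideal generated by the Kemer polynomials), the Phoenix property being what guarantees that every polynomial of $S_p\setminus\Gamma$ is a nonidentity of $B_p$. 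None of this apparatus is present or replaced by an alternative argument in your proposal, so as it stands the proof does not go through.
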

\end{section}

Kemer's proof may be found in his original article \cite{Kemer1.5} and in his monograph \cite{Kemer2}.
Our exposition here follows the steps of the proof as in
\cite{Al-Bel}. The paper \cite{Al-Bel} provides a generalization
of Kemer's Theorem for PI $G$-graded algebras $W$ over a field of
zero characteristic and $G$ an arbitrary finite group. As had been
mentioned in \cite{Al-Bel}, the proof there follows (at least
partially) the general idea of the proof in the ungraded case
which appears in \cite{BR}. 

Besides \cite{Kemer1.5}, \cite{Kemer2}, \cite{BR} and \cite{Al-Bel}, let us also mention
\cite{Sviridova1} and \cite{Sviridova2} which provide a generalization of Kemer's
theorem for PI $G$-graded algebras where $G$ is finite abelian and for affine PI
algebras with involution.

\begin{section}{Sketch of the proof} \label{Sketch of the proof}

In this short section we outline the main steps of the proof of Kemer's theorem.

\begin{enumerate}

\item
Step $1$. Show there exists a finite dimensional algebra $A$ with
$\Id(A) \subseteq \Gamma= \Id(W)$. This is a highly nontrivial result which
uses Rasmyslov, Kemer, Braun theorem on the nilpotency of the Jacobson Radical of an affine PI algebra, Kaplansky' theorem on primitive PI algebras and Lewin's theorem on products of $T$-ideals.

\item
Step $2$. Definition of $Ind(\Gamma)$, the Kemer index of any
$T$-ideal $\Gamma$ which contains the $T$-ideal of a finite
dimensional algebra $A$. In case $\Gamma = \Id(A)$, where $A$ is
a finite dimensional algebra, we refer to the index of $\Id(A)$ as the index
of $A$ and denote it by  $Ind(A)$. The Kemer index of $\Gamma$
consists of two parameters (nonnegative integers) $(\alpha, s)$,
which provide a ``measure'' of the extreme alternating properties
of polynomials which are not in $\Gamma$. It is clear that any
multilinear polynomial which is alternating on a set of
cardinality $> dim_{F}(A)$ must be in $\Id(A)$ and hence in
$\Gamma$. It follows that the cardinality of alternating sets of
any multilinear polynomial $f$ \textit{which is not} in $\Gamma$
is uniformly bounded (e.g. by $dim_{F}(A)$). Strictly speaking non
of the parameters $\alpha$ or $s$ ( where $Ind(\Gamma)=(\alpha,s)$) measures the
maximal cardinality of an alternating set in a polynomial $f
\notin \Gamma$. They measure more subtle invariants related to
alternation in polynomials $f \notin \Gamma$.

As mentioned above, the Kemer index $Ind(\Gamma)=(\alpha,s)$ is an
element in $\Omega=\mathbb{Z}^{\ge0}\times \mathbb{Z}^{\ge0}$. We
consider the lexicographic ordering  on $\Omega$ and denote it by
$\leq$. From the definition of $Ind(\Gamma)$ it will be easy to
conclude that if $\Gamma_{1} \subseteq \Gamma_{2}$ then
$Ind(\Gamma_{2}) \leq Ind(\Gamma_{1})$ (reverse ordering). In
particular $Ind(\Gamma) \leq Ind(A)$.

\item
Step $3$. Definition of Kemer polynomials of a $T$-ideal $\Gamma$.
These are extremal polynomials which are not in $\Gamma$ whose
alternation realize the Kemer index $Ind(\Gamma)$.

\item

Step $4$. Construction of basic algebras. These are finite
dimensional algebras that on one hand they are ``abundant'' enough
so that any finite dimensional algebra is PI equivalent to the
direct product of finitely many basic algebras and on the other
hand they are ``special'' enough so that the parameters $\alpha$
and $s$ (of $Ind(A)$) coincide respectively with the integers
$\dim_{F}(\overline{A})$ and $n_{A}-1$, where $\overline{A}$ is
the semisimple subalgebra of $A$ which supplements the radical
$J(A)$ and $n_{A}$ is the nilpotency index of $J(A)$.

\item

Step $5$. From the connection between the parameters of the Kemer
index of any basic algebra $A$ and its geometrical properties (namely
$\dim_{F}(\overline{A})$ and $n_{A}-1$) we obtain the Phoenix
property of Kemer polynomials of $A$. By definition, Kemer
polynomials satisfy the Phoenix property if for any Kemer polynomial $f$ of $\Gamma$ (in particular $f \notin \Gamma$) and any
polynomial $f' \in \langle f \rangle$ which is not in $\Gamma$,
there is $f'' \in \langle f' \rangle$ which is Kemer of $\Gamma$
(i.e. Kemer polynomials regenerate themselves). By the fact that
any finite dimensional algebra $A$ is PI equivalent to the product
of some basic algebra we conclude the Phoenix property of Kemer
polynomials of finite dimensional algebras.

\item
Step $6$. Find a finite dimensional algebra $B$ with $\Id(A)
\subseteq \Id(B) \subseteq \Gamma$ so that $\Id(B)$ and $\Gamma$
have the same Kemer index and have the same Kemer polynomials.
From that we obtain the Phoenix property of any Kemer polynomial
of $\Gamma$.

\item

Step $7$. Construction of a representable algebra $B_{(\alpha,s)}$ over $F$ (i.e. contained in the algebra of $n \times n$-matrices over $L$ for some $n$, and $L$ is a field extension of $F$)  with
$\Id(B_{(\alpha,s)}) \supseteq  \Gamma$ and such that all Kemer
polynomials of $\Gamma$ are nonidentities of $B_{(\alpha,s)}$.

\item

Step $8$. We consider $\Gamma'= \Gamma + S $ where $S$ is the
$T$-ideal \textit{generated} by all Kemer polynomials of $\Gamma$.
This will imply that $Ind(\Gamma') < Ind(\Gamma)$ and hence by
induction on the Kemer index there exists a finite dimensional
algebra $A'$ (over a field extension $L$ of $F$) with
$\Gamma'=\Id(A')$.

\item

Step $9$. We show that \textit{all} polynomials of $S$ (which are
not in $\Gamma$) are nonidentities of $B_{(\alpha,s)}$ (that is,
not just elements in $S$ which are Kemer polynomials as shown in
Step $7$). This is achieved by the Phoenix property of Kemer
polynomials. Since any nonidentity $f'$ of $\Gamma$ which is in
$S$, produces (by the $T$-operation) a Kemer polynomial which by
Step $7$ is not in $\Id(B_{(\alpha,s)})$ we have also that $f'
\notin \Id(B_{(\alpha,s)})$. From that one concludes that $\Gamma =
\Id(A' + B_{(\alpha,s)})$. Being the algebra $A' + B_{(\alpha,s)}$ sum of representable algebras, it is representable and so we are done.

\end{enumerate}
\end{section}

\begin{section}{Start} \label{Start}

The starting point of the Representability Theorem for affine PI algebras, is the existence of a finite dimensional $F$-algebra $B$ (here $F$ is a field of characteristic zero) whose ideal of identities $\Id_{F}(B)$ is contained in
$\Gamma$ (the given $T$-ideal of an affine PI $F$-algebra $W$). The existence of such algebra $B$ relies on three fundamental theorems of PI theory. In what follows
we prove this assertion and state (without proof) along the way the
theorems when they are needed. For future reference let us formulate the main theorem of this section.

\begin{theorem} \label{affine PI satisfies a Capelli}
Let $\Gamma$ be the $T$-ideal of identities of an affine PI
algebra $W$ over a field $F$ of characteristic zero. Then there
exists a finite dimensional
algebra $B$ over $F$ such that $\Id(B) \subseteq \Gamma$.

\end{theorem}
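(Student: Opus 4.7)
The plan is to construct $B$ by combining the three theorems named at the start of this section. First I would reduce from $W$ to its semiprimitive quotient $\overline{W} = W/J(W)$ via the Razmyslov--Kemer--Braun nilpotency theorem; then I would use Kaplansky's theorem to identify a single finite-dimensional matrix algebra whose $T$-ideal is contained in $\Id(\overline{W})$; finally I would use Lewin's theorem to lift that matrix algebra back through the nilpotent radical.

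For the radical step, since $W$ is affine PI, the Razmyslov--Kemer--Braun theorem provides an integer $n$ with $J(W)^n = 0$. I claim this forces $\Id(\overline{W})^n \subseteq \Id(W) = \Gamma$, where the left-hand side is the $n$-fold product of $T$-ideals in the free algebra. Indeed, a generator of the product has the form $g_1 g_2 \cdots g_n$ with each $g_i \in \Id(\overline{W})$; under any substitution in $W$, each factor lies in $J(W)$ (because $g_i$ vanishes modulo $J(W)$), so the product lies in $J(W)^n = 0$. Thus it will suffice to produce a finite-dimensional $F$-algebra $B$ with $\Id(B) \subseteq \Id(\overline{W})^n$.

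For the semiprimitive step, I write $\overline{W}$ as a subdirect product of its primitive quotients $\overline{W}_\lambda$. Each $\overline{W}_\lambda$ is primitive PI, so by Kaplansky's theorem it is central simple over its center of dimension at most $(d/2)^2$, where $d$ is the PI-degree of $W$; after extension of scalars to a splitting field it embeds into $M_{k_\lambda}$ with $k_\lambda \leq d/2$. Let $k$ be this uniform upper bound on the $k_\lambda$. Because $F$ is infinite in characteristic zero, the polynomial identities of $M_k(F)$ coincide with those of $M_k$ over any extension field, and $M_{k_\lambda}(F)$ embeds into $M_k(F)$; combining these observations gives $\Id(M_k(F)) \subseteq \Id(\overline{W}_\lambda)$ for every $\lambda$, hence $\Id(M_k(F)) \subseteq \Id(\overline{W})$ upon intersecting over $\lambda$, and therefore $\Id(M_k(F))^n \subseteq \Gamma$.

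The final step is to realize the $T$-ideal $\Id(M_k(F))^n$ as the ideal of identities of an actual finite-dimensional $F$-algebra. Lewin's theorem on products of $T$-ideals will do exactly this: for finite-dimensional $F$-algebras $A_1,\dots,A_n$, it produces a finite-dimensional $F$-algebra $B$ (essentially an upper triangular block matrix algebra with the $A_i$ on the diagonal and suitable free bimodules off the diagonal) satisfying $\Id(B) = \Id(A_1) \cdots \Id(A_n)$. Applying this with $A_1 = \cdots = A_n = M_k(F)$ yields the desired $B$ with $\Id(B) \subseteq \Gamma$. I expect the three black-box inputs to carry all the depth of the argument; within the proof itself the one delicate point is to invoke the equality direction of Lewin (rather than the trivial inclusion $\Id(B) \supseteq \Id(A_1) \cdots \Id(A_n)$ satisfied by any upper triangular construction), since it is that direction which places $\Id(B)$ inside $\Gamma$.
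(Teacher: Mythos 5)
Your proof is correct, and it rests on the same three pillars as the paper's (the Razmyslov--Kemer--Braun nilpotency theorem, Kaplansky's theorem, and Lewin's theorem, ending with the same algebra $UT_F(k,\ldots,k)$), but the intermediate reduction is organized differently. The paper works inside the relatively free algebra $\mathcal{W}=F\langle X\rangle/\Gamma$: it takes the preimage $J$ of $J(\mathcal{W})$, proves by an endomorphism argument that $J$ is a $T$-ideal, identifies it \emph{exactly} as $J=\Id(M_{n_0}(F))$ via Kaplansky applied to the primitive images of $\mathcal{W}/J(\mathcal{W})$, and only then uses that evaluations of $J$ on $W$ land in $J(W)$ to get $J^{r}\subseteq\Gamma$. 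You bypass the relatively free algebra entirely: you apply Kaplansky to the primitive quotients of $\overline{W}=W/J(W)$ to get the inclusion $\Id(M_k(F))\subseteq\Id(\overline{W})$, and you get $\Id(\overline{W})^{n}\subseteq\Gamma$ directly by evaluating each factor of a product $g_1\cdots g_n$ into $J(W)$ and using $J(W)^n=0$. Since your argument needs only inclusions rather than the equality $J=\Id(M_{n_0}(F))$, you avoid the lemma that the radical preimage is a $T$-ideal, which makes your route somewhat leaner; what the paper's route buys is the precise identification of the ``semisimple part'' of $\Gamma$ with the identities of a single matrix algebra, which is in the spirit of the structural analysis carried out in later sections but is not needed for this particular theorem. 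One cosmetic remark: your statement of Kaplansky's bound (dimension at most $(d/2)^2$ over the center) is the correct one, and the degenerate case $\overline{W}=0$ (so that there are no primitive quotients) is harmless in your argument since then $\Id(\overline{W})$ is the whole free algebra, but it is worth a sentence.
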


Consider the relatively free algebra $\mathcal{W}=F\left\langle X\right\rangle /\Gamma$.
Denote by $\mathcal{J}$ its Jacobson radical and by $J$ the pre-image
of $\mathcal{J}$ inside $F\left\langle X\right\rangle $. It is clear
that $J$ contains $\Gamma$.

First, we claim that $J$ is a $T$-ideal (an ideal of $F\langle X \rangle$ is a $T$-ideal if it is invariant under any endomorphism of the $F$-algebra $F\langle X \rangle$). For this recall that $r\in J(C)$ (the Jacobson radical of an $F$-algebra $C$) if and only if any element of $rC$ is right-quasi-invertible
(i.e. for every $x\in rC$ there is $y_{x}\in C$ (quasi-inverse of $x$) such that $xy_{x}-x-y_{x}=0$).
Therefore, $J(C)$ is the unique maximal right-quasi-regular right-ideal
of $C$ (a right ideal $U$ of $C$ is right-quasi-regular if every element $z \in U$ is quasi-invertible). It follows that if $I\triangleleft\mathcal{W}$ is a right-quasi-regular
right-ideal, then so is the image of $I$ (inside $\mathcal{W}$) by any surjective endomorphism of $\mathcal{W}$. Hence, by the maximality of $\mathcal{J}$, we deduce that the image of $\mathcal{J}$ under any epimorphism  $\mathcal{W}\rightarrow \mathcal{W}$
is contained in $\mathcal{J}$. We conclude that the
image of $J$ under any surjective endomorphism of $F\left\langle X\right\rangle $
is contained $J$. We have proved the invariance of the ideal $J$ under endomorphisms of $F\left\langle X\right\rangle $ which are epic. In order to show the invariance of $J$ under arbitrary endomorphisms of $F\left\langle X\right\rangle $, let $\phi\in End_{F}F\left\langle X\right\rangle $, and define
$\phi_{n}\in End_{F}F\left\langle X\right\rangle $ by
\begin{eqnarray*}
\phi_{n}(x_{i}) & = & \phi(x_{i}),\,\, i=1,2,...,n\\
\phi_{n}(x_{i}) & = & x_{i-n},\,\, i=n+1,n+2,...
\end{eqnarray*}
Note that $\phi_{n}$ is onto $F\langle X \rangle$ and $\phi_{n}|_{F\left\langle x_{1},...,x_{n}\right\rangle }=\phi|_{F\left\langle x_{1},...,x_{n}\right\rangle }$.
Thus, $\phi\left(J\cap F\left\langle x_{1},...,x_{n}\right\rangle \right)\subseteq J$.
This holds for every $n$, so the claim follows.

Next, consider the algebra $\mathcal{W}^{\prime}=\mathcal{W}/\mathcal{J}=F\left\langle X\right\rangle /J$.
This is the relatively free algebra associated with $J$. Since clearly
the Jacobson radical of $\mathcal{W}^{\prime}$ is zero, this algebra
is a sub-direct product of $A_{\mbox{Ind}}=\prod_{i\in\mbox{Ind}}A_{i}$,
where $\mbox{Ind}$ is some index set and each $\ensuremath{A_{i}}$
is a primitive $F$-algebra. Note that all the $A_{i}$ satisfy the
identities of $\mathcal{W}$ and $\Id_{F}(A_{\mbox{Ind}})=\Id_{F}(\mathcal{W}^{\prime})$.
\begin{theorem}
[Kaplansky]Suppose $A$ is a primitive $F$-algebra satisfying a
PI of degree $d$. Then $A$ is a central simple algebra and $\mbox{dim}_{\mbox{Cent}(A)}A\leq\left\lfloor d/2\right\rfloor $.
\end{theorem}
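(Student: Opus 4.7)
The plan is to combine the Jacobson density theorem with the elementary non-identity lemma that $M_n(R)$, for any commutative unital ring $R$, satisfies no nontrivial multilinear polynomial identity of degree below $2n$. Once that lemma is in hand, both the structure of $A$ and the dimension bound fall out mechanically.

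First I would fix a faithful simple right $A$-module $M$ (which exists by primitivity of $A$) and set $D := \End_A(M)^{\op}$, so that $M$ is a right $D$-module and $A$ embeds as a dense subring of $\End_D(M)$. To bound $n := \dim_D M$, observe that for any $n$-dimensional $D$-subspace $V \subseteq M$ the subring $\{a \in A : aV \subseteq V\}$ of $A$ surjects onto $\End_D(V) \cong M_n(D^{\op})$, so $M_n(D^{\op})$ inherits the PI of degree $d$, forcing $n \leq \lfloor d/2 \rfloor$ by the non-identity lemma. Consequently $\dim_D M$ is finite, density degenerates to equality, and $A \cong M_n(D^{\op})$. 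Since $D^{\op}$ embeds in $A$, $D$ itself satisfies the PI of degree $d$; a maximal subfield $L \subseteq D$ (which exists and has finite degree over $Z(D)$ once one knows $D$ is locally finite over its center) is a splitting field, so $D \otimes_{Z(D)} L \cong M_k(L)$ with $k^2 = [D : Z(D)]$, and the same lemma applied to $M_k(L)$ yields $k \leq \lfloor d/2 \rfloor$. Combining, $A$ is central simple over $\mbox{Cent}(A) = Z(D)$ with $\dim_{\mbox{Cent}(A)} A = n^2 [D : Z(D)] \leq \lfloor d/2 \rfloor^2$. (The squared form is the standard Kaplansky bound; the exponent appears to be missing from the displayed statement.)

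The main obstacle is the non-identity lemma itself, which is the combinatorial heart of the theorem. Its proof substitutes carefully chosen matrix units $e_{i_1 i_2}, e_{i_2 i_3}, \ldots$ into a hypothetical multilinear identity, arranging indices so that exactly one monomial survives; its coefficient is then forced to vanish. A subsidiary point is the bootstrap needed before invoking a finite-degree splitting field: one first shows via the same lemma that every element of $D$ is algebraic over $Z(D)$ of degree at most $\lfloor d/2 \rfloor$, so $D$ is locally finite over $Z(D)$, and only then extracts a maximal subfield of finite degree. Neither step is deep, but the bootstrapping requires a little care to avoid circularity.
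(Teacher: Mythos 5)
The paper states Kaplansky's theorem as a classical input and gives no proof of it, so your attempt must be measured against the standard argument. The first half of your outline is sound: primitivity, the density theorem, and the staircase non-identity lemma applied to $\{a\in A: aV\subseteq V\}\twoheadrightarrow \End_D(V)$ for finite-dimensional $V$ do give $\dim_D M=n\le\lfloor d/2\rfloor$ and $A\cong M_n(D^{\op})$. (One small repair: you state the lemma for $M_n(R)$ with $R$ commutative but apply it to $M_n(D^{\op})$; the staircase substitution works over any unital ring, so state it that way.)

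The genuine gap is in the division-ring step. You propose to show ``via the same lemma'' that every element of $D$ is algebraic over $Z(D)$ of degree at most $\lfloor d/2\rfloor$, deduce that $D$ is locally finite over $Z(D)$, and only then extract a maximal subfield of finite degree that splits $D$. No mechanism is indicated for the first claim, and I do not see one: the staircase lemma constrains matrix subquotients, while $Z(D)[a]$ is commutative and satisfies every identity of degree $\ge 2$, so the lemma says nothing about a single element; in fact element-wise algebraicity is usually obtained as a corollary of the theorem, not an ingredient. Moreover, ``algebraic of bounded degree implies locally finite'' is a nontrivial theorem of Jacobson, not a subsidiary remark, and even granting local finiteness you still need $[D:Z(D)]<\infty$ before you may assert that a maximal subfield $L$ satisfies $D\otimes_{Z(D)}L\cong M_k(L)$ with $k^2=[D:Z(D)]$. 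The standard way to close the gap, which makes your bootstrap unnecessary, is: take any maximal subfield $L$ of $D$ (Zorn, no finiteness needed); then $D\otimes_{Z(D)}L$ acts faithfully and densely on $D$ viewed as an $L$-vector space, with commutant $L$; multilinear identities of $D$ pass to $D\otimes_{Z(D)}L$ because $L$ is commutative; now rerun your density-plus-staircase argument to get $\dim_L D\le\lfloor d/2\rfloor$, hence $[D:Z(D)]=(\dim_L D)^2<\infty$. Finally, your closing estimate does not follow from what you proved: $n\le\lfloor d/2\rfloor$ and $k\le\lfloor d/2\rfloor$ only give $\dim_{Z(A)}A=n^2k^2\le\lfloor d/2\rfloor^4$. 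To get the intended sharp bound (degree of $A$ at most $\lfloor d/2\rfloor$, i.e. $\dim_{Z(A)}A\le\lfloor d/2\rfloor^2$, which is what the displayed statement should read), apply the staircase lemma once more to $A\otimes_{Z(A)}\overline{Z(A)}\cong M_{nk}\bigl(\overline{Z(A)}\bigr)$, which still satisfies the multilinear identity of degree $d$, to conclude $nk\le\lfloor d/2\rfloor$ directly.
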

Hence, $\mathcal{W}^{\prime}$ has the same identities as $\prod_{i\in\mbox{Ind}}M_{n_{i}}(F_{i})$
, where $F_{i}$ are fields containing $F$ and all the $n_{i}$ are
uniformly bounded. The ideal of identities of this algebra is equal
to $\cap_{i\in\mbox{Ind}}\Id_{F}(M_{n_{i}}(F))$ ($\Id_{F}(M_{n_{i}}(F_i))=\Id_{F}(M_{n_{i}}(F)))$, thus there is some
$n_{0}$ such that this ideal is equal to $\Id_{F}(M_{n_{0}}(F))$.
We conclude that $\Id_{F}(M_{n_{0}}(F))=\Id_{F}(\mathcal{W}^{\prime})=J$.

In order to continue we need to know that $\mathcal{J}$ is nilpotent.
\begin{theorem}
[RKB]\label{Nilp_Jacobson}If $A$ is an affine PI $F$-algebra, then its Jacobson radical
is nilpotent.
\end{theorem}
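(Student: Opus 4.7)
The plan is to follow the Razmyslov--Braun approach, combining Shirshov's Height Theorem with trace identities for matrices.

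First, I would set up the reduction. By the Kaplansky-based argument preceding this statement, $A/J(A)$ satisfies the identities of $M_{n_0}(F)$ for some integer $n_0$, so $A$ satisfies a Capelli identity of size $n_0^2+1$ modulo $J(A)$. Fix a finite generating set $a_1,\ldots,a_m$ of $A$. Shirshov's Height Theorem then produces finitely many words $w_1,\ldots,w_t$ in the $a_i$ of length bounded in terms of $n_0$ and $m$, together with a Shirshov height $H=H(n_0,m)$, such that $A$ is spanned over $F$ by all products $w_{i_1}^{k_1}\cdots w_{i_h}^{k_h}$ with $h\le H$ and exponents $k_j\ge 0$.

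The heart of the argument is to upgrade this spanning property to the assertion that $A$ is a finitely generated module over an affine commutative subalgebra. For this I would invoke Razmyslov's Cayley--Hamilton-type trace identity in the free algebra of $M_{n_0}$: every element $w$ satisfies a monic relation $w^{n_0}=\sum_{k<n_0}c_k(w)\,w^k$ whose coefficients $c_k(w)$ are polynomials in the ``formal traces'' of monomials built from the variables. The key point, due to Braun, is that through Formanek--Razmyslov central polynomials these coefficients can be realized as genuine central elements of $A$ up to an error in $J(A)$, so that the exponents $k_j$ in the Shirshov expansion are bounded by $n_0-1$ modulo a nil ideal. This presents $A$, modulo a nil ideal $\mathfrak{n}\subseteq J(A)$, as finitely generated over a finitely generated commutative subalgebra $C\subseteq A$.

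To conclude, $C$ is Noetherian by Hilbert's basis theorem, so $A/\mathfrak{n}$ is a Noetherian $C$-module and hence Noetherian as a ring on both sides. Combined with Amitsur's theorem that $J(A)$ is nil for every affine PI algebra, the descending chain $J(A)\supseteq J(A)^2\supseteq\cdots$ of two-sided ideals in $A/\mathfrak{n}$ must stabilize, and nilness then forces $J(A)^k\subseteq\mathfrak{n}$ for some $k$; a Levitzki-style argument inside the nil ideal $\mathfrak{n}$ then gives $\mathfrak{n}^{\ell}=0$, so $J(A)^{k\ell}=0$. The main obstacle is the middle step: the Cayley--Hamilton identity lives naturally in generic matrix algebras over a polynomial ring of traces, and transferring it to $A$ itself, rather than merely to $A/J(A)$, requires the delicate construction of central polynomials and the trace ring that constitute the technical core of Braun's argument.
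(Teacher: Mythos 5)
The paper does not prove this statement at all: Theorem \ref{Nilp_Jacobson} is one of the three classical inputs (Kaplansky, Razmyslov--Kemer--Braun, Lewin) that Section \ref{Start} explicitly states ``without proof'' and uses as a black box. So there is no proof in the paper to compare yours against, and your proposal has to be judged on its own terms. As such, it correctly names the standard Razmyslov--Braun strategy (Shirshov height, Cayley--Hamilton trace relations internalized via Formanek--Razmyslov central polynomials, reduction to a finite module over an affine commutative subalgebra), but it is an outline with genuine gaps rather than a proof.

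Two gaps are substantive. First, the step you yourself flag --- realizing the characteristic coefficients as honest central elements of $A$ (not merely of $A/J(A)$ or of a trace ring constructed outside $A$), with controlled error --- \emph{is} Braun's theorem; asserting that it ``can be done'' through central polynomials does not prove it, and nothing in your sketch indicates how the induction that makes this work is organized. Second, your endgame is circular: you reduce to showing that the auxiliary nil ideal $\mathfrak{n}$ is nilpotent and appeal to ``a Levitzki-style argument inside $\mathfrak{n}$'', but Levitzki's theorem needs an ACC hypothesis and $A$ itself is not known to be Noetherian; the assertion ``every nil ideal of an affine PI algebra is nilpotent'' is equivalent to the theorem being proved (nil ideals lie in $J(A)$, and $J(A)$ is nil by Amitsur), so this step just moves the problem from $J(A)$ to $\mathfrak{n}$. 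There is also a smaller slip in the same passage: Noetherianity gives the \emph{ascending} chain condition, so the chain $J\supseteq J^{2}\supseteq\cdots$ need not stabilize for that reason; the correct tool in the Noetherian quotient $A/\mathfrak{n}$ is again Levitzki's theorem applied to the image of $J(A)$, which is fine there but unavailable for $\mathfrak{n}$ itself. Braun's actual argument escapes the circularity by a careful induction (on the matrix size governing the identities), working with the ideal generated by evaluations of a central polynomial together with Artin--Procesi/Razmyslov-type results; without that inductive structure the reduction you describe does not close.
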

Note that we cannot apply Theorem \ref{Nilp_Jacobson} directly to $\mathcal{W}$ since (in general) it is
nonaffine. However, since an element $c$ of an algebra $C$ is in
$J(C)$ if and only if $cC$ consists of right-quasi-regular elements,
we deduce that the evaluations of $J$ on $W$ are inside $J(W)$.
Since $W$ is affine, there is an integer $r$ for which $J(W)^{r}=0$,
thus $J^{r}\subseteq \Id_{F}(W)=\Gamma$.

We complete the proof by showing that $J^{r}$ is the $T$-ideal of a certain finite dimensional algebra over $F$. Recall that $J$ is the $T$-ideal
of identities of $M_{n_{0}}(F)$.

\begin{theorem}
[Lewin]Let $B=UT_{F}(d_{1},...,d_{m})$ be the upper block triangular
matrix algebra over $F$. Then, $\Id_{F}(B)=\Id_{F}(M_{d_{1}}(F)\cdots \Id_{F}(M_{d_{m}}(F))$.
\end{theorem}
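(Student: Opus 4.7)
For the forward inclusion $\Id_{F}(M_{d_{1}}(F))\cdots\Id_{F}(M_{d_{m}}(F))\subseteq \Id_{F}(B)$, I would induct on $m$ via a Peirce-decomposition calculation. Let $e_{1},\ldots,e_{m}$ be the block-diagonal idempotents of $B$, so that $e_{i}Be_{i}\cong M_{d_{i}}(F)$, $e_{i}Be_{j}=0$ for $i>j$, and the Jacobson radical $J=\bigoplus_{i<j}e_{i}Be_{j}$ is nilpotent with $J^{m}=0$. For the base case $m=2$, I would take multilinear $f\in \Id_{F}(M_{d_{1}}(F))$ and $g\in \Id_{F}(M_{d_{2}}(F))$ and evaluate $fg$ on $B$. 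Decomposing each input as $x=e_{1}xe_{1}+e_{1}xe_{2}+e_{2}xe_{2}$ and expanding by multilinearity, $f(x_{1},\ldots,x_{n})$ becomes a sum indexed by Peirce-type assignments; the all-$(1,1)$ assignment vanishes because $f$ is an identity of $M_{d_{1}}(F)\cong e_{1}Be_{1}$, while every other surviving monomial lies in $e_{2}Be_{2}\oplus e_{1}Be_{2}$. Symmetrically $g(y_{1},\ldots,y_{k})\in e_{1}Be_{1}\oplus e_{1}Be_{2}$, and each of the four cross-products in $fg$ is killed either by $e_{i}e_{j}=0$ for $i\neq j$ or by $(e_{1}Be_{2})(e_{1}Be_{2})=0$. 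Since $\Id_{F}(M_{d_{i}}(F))$ are $T$-ideals the same analysis handles arbitrary substitutions, giving the base case after $T$-ideal closure; the inductive step realizes $B$ as a $2\times 2$ block upper triangular algebra with corners $M_{d_{1}}(F)$ and $UT_{F}(d_{2},\ldots,d_{m})$ and invokes the base case together with the inductive hypothesis.

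For the reverse inclusion $\Id_{F}(B)\subseteq \Id_{F}(M_{d_{1}}(F))\cdots\Id_{F}(M_{d_{m}}(F))$, which is the genuine content of the theorem, I would reduce to $m=2$ and follow Lewin's universal-model strategy. Setting $U_{i}=F\langle X\rangle/\Id_{F}(M_{d_{i}}(F))$, form the generalized upper triangular algebra $R=\begin{pmatrix} U_{1} & N \\ 0 & U_{2} \end{pmatrix}$ where $N$ is the free $U_{1}$-$U_{2}$-bimodule on countably many generators. Testing a multilinear identity of $R$ on elements carrying exactly one $N$-generator forces a factorization into an identity of $U_{1}$ times an identity of $U_{2}$, giving $\Id_{F}(R)=\Id_{F}(M_{d_{1}}(F))\cdot \Id_{F}(M_{d_{2}}(F))$. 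One then identifies $\Id_{F}(R)$ with $\Id_{F}(UT_{F}(d_{1},d_{2}))$ by verifying that the entries of generic block upper triangular matrices over $F$ behave with enough algebraic independence to realize the universal bimodule $N$.

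The main obstacle is the reverse inclusion, specifically the passage from the universal model $R$ back to the concrete algebra $B$; this is the combinatorial core of Lewin's argument and the reason the theorem is nontrivial. Because the present paper uses Lewin's theorem only as a tool on the way to Kemer's representability theorem, in practice I would present the Peirce-decomposition argument for the forward inclusion in detail and cite Lewin's original paper for the reverse direction, which is the standard treatment of this classical result in the PI literature.
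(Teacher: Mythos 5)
The paper does not actually prove this statement: Lewin's theorem is one of the three classical inputs (together with Kaplansky's theorem and the Razmyslov--Kemer--Braun theorem) that Section~\ref{Start} explicitly states \emph{without proof} and uses as a black box, so there is no in-paper argument to compare yours against; your plan to prove the easy inclusion and cite Lewin for the hard one is therefore consistent with (and slightly more detailed than) the paper's own treatment. Judged on its own, the part you do argue is correct: for the inclusion $\Id_{F}(M_{d_{1}}(F))\cdots\Id_{F}(M_{d_{m}}(F))\subseteq\Id_{F}(B)$ your Peirce-decomposition induction works, and it can be compressed further --- since $b\mapsto e_{i}be_{i}$ is an algebra homomorphism of $B$ onto $M_{d_{i}}(F)$, any evaluation on $B$ of $f_{i}\in\Id_{F}(M_{d_{i}}(F))$ has zero $(i,i)$ block, and a product of $m$ block upper triangular matrices whose $i$-th factor has vanishing $(i,i)$ block is zero by the standard staircase/pigeonhole argument on the nondecreasing chain of block indices, so neither multilinearity nor induction on $m$ is needed.

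For the reverse inclusion, which is the genuine content of the theorem, what you offer is a plan rather than a proof: both the identification $\Id_{F}(R)=\Id_{F}(M_{d_{1}}(F))\,\Id_{F}(M_{d_{2}}(F))$ for the free-bimodule model and, more delicately, the passage from $R$ back to the concrete algebra $B$ (which requires $F$ infinite and a genericity argument for the off-diagonal block) are precisely the steps you defer to Lewin's original paper. That deferral is legitimate in this context, but be aware that as written your text establishes only the easy inclusion. A minor remark: the statement as printed in the paper has a typographical slip --- the right-hand side should be the product of the $T$-ideals $\Id_{F}(M_{d_{1}}(F))\cdots\Id_{F}(M_{d_{m}}(F))$; a closing parenthesis is missing.
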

Therefore, $J^{r}$ is the ideal of identities of the finite dimensional
$F$-algebra $UT_{F}(\underset{r\mbox{ times}}{\underbrace{n_{0},...,n_{0}}})$.

It follows that there exists an $n$ such that any
polynomial which alternates on a set of cardinality $n$ is an
identity of $W$. In that case we say that $c_{n}$, the $n$th Capelli polynomial, is in $\Gamma$ or that $W$ satisfies the $n$th Capelli identity.

\begin{remark}
It is well known (and easy to prove) that any $T$-ideal over a field of characteristic zero is generated by multilinear polynomials and hence $T$-ideals are stable under field extensions. This means that if $\Gamma = \Id(W)$ where $W$ is an algebra over $F$, then $\Gamma \otimes_{F}L=\Id_{L}(W \otimes_{F}L)$. It follows that we may assume, by extensions of scalars, that the algebra $A$ (appearing in \ref{affine PI satisfies a Capelli}) is finite dimensional over $F$ (rather than $K$). In addition, we assume as we may that $F$ is algebraically closed.

\end{remark}

\end{section}
\begin{section}{the index of $T$-ideals}\label{the index of $T$-ideals}

The following terminology will be used frequently.

\begin{definition}
Let $f(X,Y)$ be a polynomial in noncommuting variables where $X=\{x_1,\ldots,x_{n}\}$ and $Y$ is an arbitrary finite set. Suppose further that $f$ is multilinear on the set $X$. We say that $f$ is alternating on $X$ if there exists a polynomial $h=h(X,Y)$ (multilinear on the set $X$) such that
$$
f(x_1,\ldots,x_{n},Y)=\sum_{\sigma \in Sym(n)}(-1)^{\sigma}h(x_{\sigma(1)},\ldots,x_{\sigma(n)},Y).
$$
\end{definition}

\begin{terminology}\textit{Multialternating polynomial} We consider polynomials with $\nu$ disjoint
sets of alternating variables of cardinality $d$ and $\mu$ disjoint sets (also disjoint
to the previous sets) of alternating variables of cardinality $l$. In particular we consider the case $l=d+1$.

\end{terminology}

Given a $T$-ideal of identities $\Gamma$ of an affine PI algebra
$W$ we have by the previous section that there exists an integer $k$ such that any polynomial
$f$ with an alternating set of $k$ variables is necessarily in
$\Gamma$ (indeed, any $k > dim_{F}(A)$ will do). So if $d$ is an integer for which there
exist polynomials with alternating sets of cardinality $d$ which are not in $\Gamma$,
then clearly $d < k$. Fix a nonnegative integer $\nu$ and
consider the largest integer $d_{\nu}$ for which there exist
multilinear polynomials \textbf{not} in $\Gamma$  with $\nu$
disjoint alternating sets of cardinality $d_{\nu}$. Clearly, the
function $d_{\nu}$ is nonnegative, integer valued and
nonincreasing. Denote by $d= \lim d_{\nu}$.
Furthermore, we let $\mu$ be the integer $\nu$ for which the
function $d_{\nu}$ stabilizes (i.e. reaches the limit). Note that
$d$ may be zero as it is in the case of nilpotent algebras.

By the
definition of the integer $d$ it follows that there exist multilinear
polynomials not in $\Gamma$ with arbitrary many disjoint
alternating sets of variables of cardinality $d$. On the other hand (again from the definition of $d$)
there is a bound on the number of alternating
disjoint sets of cardinality $d+1$ that one can find in
multilinear polynomials outside $\Gamma$. Let us denote by
$s_{\nu}$ the maximal number of alternating sets of cardinality
$d+1$ which appear in polynomials not in $\Gamma$ which have $\nu$
alternating sets of cardinality $d$. Again, the function $s_{\nu}$
is nonincreasing, nonnegative integer valued and
hence has a limit which we denote by $s$. So we have constructed
three integers, namely $d$, $\mu$ and $s$ (we take $\mu$, to be
large enough so that the two functions $d_{\nu}$ and $s_{\nu}$
reach the limit). By construction we can find multilinear
polynomials outside $\Gamma$ with arbitrary many alternating
disjoint sets of cardinality $d$ and precisely $s$ sets of
alternating sets of cardinality $d+1$. Furthermore, as long as the
number of alternating sets of cardinality $d$ exceeds (or equal)
$\mu$ we will not find multilinear polynomials outside $\Gamma$
with more than $s$ alternating sets of cardinality $d+1$.

\begin{example}

Let $W$ be a PI algebra and denote by $\Gamma=\Id(W)$ its
$T$-ideal of identities. Then $W$ is nilpotent with
\textit{nilpotency index} $=n$ ($W^{n-1}\neq 0$ and $W^{n}=0$) if
and only if $d(\Gamma)=0$ and $s(\Gamma)=n-1$.
\end{example}

\begin{proof}
Note that $W$ is a nilpotent algebra with nilpotency index $n
\geq 1$ if and only if $x_1\cdots x_n \in \Gamma$, and $n$ is
minimal. It follows there are no multilinear polynomials outside
$\Gamma$ with arbitrary many alternating sets of cardinality $1$
and hence $d=0$. On the other hand we have at most $n-1$
(alternating sets) of cardinality $1$ (in a nonidentity) and hence
$s\leq n-1$. Since $x_1\cdots x_{n-1} \notin \Gamma$,
$s(\Gamma)=n-1$. Conversely, if $d=0$, there is a bound on the
number of alternating sets of cardinality $1$ and hence there is a
bound on the length of multilinear monomials. If $m$ is the
largest length, it is clear that the nilpotency index is $m+1$ as
required.
\end{proof}

\begin{remark}
Note that if $W$ is an algebra over $F$, affine and nilpotent, then it is clearly finite dimensional over $F$ and hence representable. This allows us to focus (in certain proofs) on affine PI algebras with index $(\alpha, \gamma)$ where $\alpha \geq 1$.
\end{remark}

\begin{definition}
We say that a multilinear polynomial $f$ is a Kemer polynomial of
$\Gamma$ if (a) $f$ is not in $\Gamma$ (b) $f$ has at least $\mu$
alternating disjoint sets of cardinality $d$ (\textit{small sets}) (c) $f$ has exactly
$s$ alternating disjoint sets of cardinality $d+1$ (\textit{big sets}).

\end{definition}

Given a $T$-ideal of identities of an affine PI algebra we extract
two parameters, namely $d$ and $s$ and consider the pair
$Ind(\Gamma)=(d, s)$ (Kemer index of $\Gamma$) as a point in the
set $\Omega=Z^{\geq 0} \times Z^{\geq 0}$ with the (left)
lexicographic ordering (denoted by $\leq$). So any $T$-ideal of
identities of an affine PI algebra determines a point in $\Omega$.

\begin{remark}
If $W$ is non-PI, we have $\Gamma=0$ and $d$ is not bounded. We
are assuming this is not the case. If $W=0$, the $T$-ideal is the
free algebra and hence $Ind(\Gamma)=(0,0)$. If $W$ is an affine PI
algebra and nonzero, $Ind(\Gamma)> (0,0)$ and is in $Z^{\geq 0}
\times Z^{\geq 0}$. If $W$ is PI but nonaffine, then it may
satisfy a Capelli identity and it may not. If it satisfies a
Capelli identity then it has an index in $\Omega$ and if it
doesn't satisfy any Capelli identity it does not have an index in
$\Omega$. Note that by Theorem \ref{affine PI satisfies a Capelli}
any affine PI algebra satisfies a Capelli identity $c_{n}$ for
some $n$.

\end{remark}

\begin{lemma}\label{comparison}

If $\Gamma_{1} \subseteq \Gamma_{2}$ then $Ind(\Gamma_{1}) \geq
Ind(\Gamma_{2})$ i.e. the order is reversed.
\end{lemma}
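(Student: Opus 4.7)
The plan is to exploit the straightforward observation that $\Gamma_1\subseteq\Gamma_2$ forces every multilinear \emph{non}identity of $\Gamma_2$ to be a multilinear nonidentity of $\Gamma_1$ (taking the contrapositive of $f\in\Gamma_1\Rightarrow f\in\Gamma_2$). Since the Kemer index is defined purely in terms of alternating structure on multilinear polynomials outside the $T$-ideal, enlarging the supply of nonidentities can only push the two invariants $d$ and $s$ upward.

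Concretely, I would first compare the $d$-coordinates. Writing $d_\nu(\Gamma)$ for the largest cardinality of disjoint alternating sets appearing $\nu$ times in some multilinear polynomial outside $\Gamma$, the observation above gives $d_\nu(\Gamma_1)\ge d_\nu(\Gamma_2)$ for every $\nu$, because a witness for $d_\nu(\Gamma_2)$ lies outside $\Gamma_2$ and hence outside $\Gamma_1$. Passing to the limit yields $d(\Gamma_1)\ge d(\Gamma_2)$. If this inequality is strict, the lexicographic order immediately gives $\mathrm{Ind}(\Gamma_1)\ge\mathrm{Ind}(\Gamma_2)$ and we are done.

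In the remaining case $d(\Gamma_1)=d(\Gamma_2)=d$, I would compare the $s$-coordinates in the same spirit. Let $\mu_2$ be the stabilization threshold for $\Gamma_2$. By the definition of $s(\Gamma_2)$, for every $\nu\ge\mu_2$ there exists a multilinear polynomial $f_\nu\notin\Gamma_2$ with $\nu$ disjoint alternating sets of cardinality $d$ and exactly $s(\Gamma_2)$ disjoint alternating sets of cardinality $d+1$. Since $\Gamma_1\subseteq\Gamma_2$, we also have $f_\nu\notin\Gamma_1$, so $s_\nu(\Gamma_1)\ge s(\Gamma_2)$ for all $\nu\ge\mu_2$. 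Because $s_\nu(\Gamma_1)$ is nonincreasing and integer-valued, its limit $s(\Gamma_1)$ satisfies $s(\Gamma_1)\ge s(\Gamma_2)$, completing the lexicographic comparison.

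There is no genuine obstacle; the only point requiring a little care is the bookkeeping in the second case — one has to take $\nu$ larger than both the stabilization threshold $\mu_2$ of $\Gamma_2$ and (eventually) the threshold $\mu_1$ of $\Gamma_1$, so that the witnesses $f_\nu$ simultaneously certify the limiting value of $s_\nu(\Gamma_1)$ and the value $s(\Gamma_2)$. Once this is handled, the argument is essentially a one-line contrapositive applied twice.
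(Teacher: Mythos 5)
Your proof is correct and follows essentially the same route as the paper: the paper phrases the key step as the containments $\Delta^{(\Gamma_1)}_{\nu}\supseteq\Delta^{(\Gamma_2)}_{\nu}$ and (when $d_{\Gamma_1}=d_{\Gamma_2}$) $S^{(\Gamma_1)}_{\nu}\supseteq S^{(\Gamma_2)}_{\nu}$, which is exactly your witness-passing argument, giving $d_\nu$ and $s_\nu$ inequalities termwise and then passing to the limit. Your closing worry about also exceeding the threshold $\mu_1$ is unnecessary (a nonincreasing integer sequence eventually $\geq s(\Gamma_2)$ has limit $\geq s(\Gamma_2)$), but it does no harm.
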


Before proving the lemma (below) let us explain once again (in more formal terminology) the functions $d_{\nu}$ and $s_{\nu}$ considered above.

\begin{explanation} \textit{{The function $d_{\nu}$, $s_{\nu}$}}

We are assuming that there exists a finite dimensional algebra $A$
(say of dimension $n$) such that $\Id(A) \subseteq \Gamma$. The
algebra $A$ satisfies $c_{n+1}$ meaning that any multilinear
polynomial $p(X)$ which is alternating on a set of cardinality
$n+1$ is in $\Gamma$. In other words, outside $\Gamma$ there is no
multilinear polynomial which is alternating on a set of
cardinality $n+1$.

Fix an integer $\nu=1,2,\ldots,$. Let $d_{\nu}$ be the maximal
nonnegative integer such that there exists \textit{outside}
$\Gamma$ a multilinear polynomial which is alternating on $\nu$
disjoint sets of cardinality $d_{\nu}$. Note that $d_{\nu} < n+1$
since any polynomial with at least one alternating set of
cardinality $n+1$ is already in $\Gamma$. This says that we can
find outside $\Gamma$ a multilinear polynomial with $\nu$
alternating (disjoint) sets of cardinality $d_{\nu}$ but any
polynomial with $\nu$ alternating sets of cardinality $d_{\nu}+1$
is already in $\Gamma$. In that sense $d_{\nu}$ is maximal.

Now take $\rho > \nu$. Then, by definition, there is a multilinear
polynomial $q(X) \notin \Gamma$ with $\rho$ alternating sets of
cardinality $d_{\rho}$. Let us show $d_{\rho} \leq d_{\nu}$.
Suppose $d_{\rho} > d_{\nu}$. This would contradict the maximality
of $d_{\nu}$ since the polynomial $q(X)$ has $\rho$ sets (and
hence $\nu$ sets) of cardinality $d_{\rho}$ which are alternating.
Consequently, the function $d: \mathbb{N}\rightarrow
\mathbb{N}\cup \{0\}$ is nonincreasing and hence has a limit which
we denote by $d$. We denote by $\mu$ the minimal integer with
$d_{\mu}=d_{\mu+1}=d_{\mu+2}=\dots$.

Here is the interpretation of $d$ and $\mu$. There exist
polynomials outside $\Gamma$ with arbitrary many alternating sets
(disjoint) of cardinality $d$. On the other hand we will not find
polynomials outside $\Gamma$ with arbitrary many alternating sets
of cardinality $d+1$. By the definition of $\mu$ above, the number
of alternating sets of cardinality $d+1$ we can find in
polynomials outside $\Gamma$ is bounded by $\mu-1$.

\end{explanation}

Let us return to the above considerations with somewhat more
formal notation and prove Lemma \ref{comparison}. For $\nu \in
\mathbb{N}$ consider the set of nonnegative integers

\bigskip

$\Delta_{\nu} = \{r \in \mathbb{N} \cup \{0\}: \exists p(X) \notin
\Gamma,$ alternating on $\nu$ disjoint sets of cardinality $r$\}.

\bigskip

The set $\Delta_{\nu}$ is bounded by $n$ (the dimension of $A$).
We denote by $d_{\nu}$ its maximum. Since any $\rho$ alternating
set contains a $\nu$ alternating set for any $\nu < \rho$ we have
that $\Delta_{\rho} \subseteq \Delta_{\nu}$ and hence $d_{\rho}
\leq d_{\nu}$. We denote by $d$ the limit of $d_{\nu}$.

\bigskip
In order to construct the second parameter of the Kemer index of
$\Gamma=\Id(W)$ ($W$ affine and PI), we know by the definition of
the parameter $d$, that for any $\nu \geq 1$, there is a
multilinear polynomial $p_{\nu}(X) \notin \Gamma$ which alternates
on $\nu$ sets of cardinality $d$. Clearly, the parameter $d$
depends on the $T$-ideal $\Gamma$ so in what follows we may write
$d_{\Gamma}$. From the definition of the parameter $d_{\Gamma}$ we
know that the set

$S^{(\Gamma)}_{\nu} = \{j \in \mathbb{N} \cup \{0\}: \exists p(X)
\notin \Gamma,$ alternating on $\nu$ disjoint sets of cardinality
$d_{\Gamma}$ and $j$ disjoint sets (and disjoint to the previous
sets) of cardinality $d_{\Gamma}+1$ \} is nonempty. Furthermore,
by the maximality of $d_{\Gamma}$, we know the set
$S^{(\Gamma)}_{\nu}$ is bounded for every $\nu$ and we denote by
$s_{\nu} = max(S^{(\Gamma)}_{\nu})$. As it is for the sequence
$\{d_{\nu}\}$, also the sequence $\{s_{\nu} \}$ is nonincreasing
and we let $s=s_{\Gamma}=lim_{\nu \rightarrow \infty} s_{\nu}$. We
let $\mu$ be an integer where the sequences $\{d_{\nu}\}$ and
$\{s_{\nu}\}$ reach the limit.

\begin{remark}

In what follows it \textit{will not be important} to keep the
precise value $\mu$ (i,e, the minimal value) where the sequences
$\{d_{\nu}\}$ and $\{s_{\nu}\}$ stabilize. We can take larger
integers.
\end{remark}

\begin{proof}{(of Lemma \ref{comparison})}

Consider the sets $\Delta^{(\Gamma_1)}_{\nu}$ and
$\Delta^{(\Gamma_2)}_{\nu}$ which correspond to integer $\nu$ and
the $T$-ideals $\Gamma_{1}$ and $\Gamma_{2}$ respectively. Since
$\Gamma_{1} \subseteq \Gamma_{2}$ we have
$\Delta^{(\Gamma_1)}_{\nu} \supseteq \Delta^{(\Gamma_2)}_{\nu}$.
Consequently, $(d_{\Gamma_1})_{\nu}=max(\Delta^{(\Gamma_1)}_{\nu})
\geq (d_{\Gamma_2})_{\nu}=max(\Delta^{(\Gamma_2)}_{\nu})$ for
every $\nu$ and hence $d_{\Gamma_1} \geq d_{\Gamma_2}$. In order
to complete the proof of the lemma, we need to show that if
$d_{\Gamma_1} = d_{\Gamma_2}$ then $s_{\Gamma_1} \geq
s_{\Gamma_2}$. To see this, note that in that case
$S^{(\Gamma_1)}_{\nu} \supseteq S^{(\Gamma_2)}_{\nu}$ for every
$\nu$ and hence $(s_{\Gamma_1})_{\nu}=max(S^{(\Gamma_1)}_{\nu})
\geq (s_{\Gamma_2})_{\nu}=max(S^{(\Gamma_2)}_{\nu})$ for every
$\nu$. Taking the limit we have $s_{\Gamma_1} \geq s_{\Gamma_2}$
and we are done.
\end{proof}

\end{section}

\begin{section}{The index of finite dimensional
algebras}\label{The index of finite dimensional
algebra}


We start this section with the definition of the Phoenix property.

\begin{definition}(The Phoenix property)
Let $\Gamma$ be a $T$-ideal as above. Let $P$ be any property
which may be satisfied by polynomials (e.g. being Kemer). We say
that $P$ is ``\textit{$\Gamma$-Phoenix}'' (or in short
``\textit{Phoenix}'') if given a multilinear polynomial $f$
satisfying $P$ which is not in $\Gamma$ and \textit{any} $f^{'}$
in $\langle f \rangle$ (the $T$-ideal generated by $f$) which is
not in $\Gamma$ as well, there exists a multilinear polynomial
$f^{''}$ in $\langle f^{'} \rangle$ which is not in $\Gamma$ and
satisfies $P$. We say that $P$ is ``\textit{strictly
$\Gamma$-Phoenix}'' if any multilinear polynomial $f^{'} \in
\langle f \rangle$ which is not in $\Gamma$, satisfies $P$.

\end{definition}

\begin{remark}

Given a polynomial $g$, there exists a multilinear polynomial
$f'$ such that $\langle f'\rangle = \langle g \rangle$. It follows
that in order to verify the Phoenix property it is sufficient to
consider multilinear polynomials $f'$  in  $\langle f \rangle$.

\end{remark}

\begin{example}
Multilinearization implies that ``multilinearity'' is Phoenix:
indeed, if $f$ is any multilinear polynomial not in $\Gamma$ and
$f' \in \langle f \rangle$ which is not in $\Gamma$ there exists a
multilinear polynomial $f'' \in \langle f' \rangle$ which is not
in $\Gamma$.

\end{example}

Let us pause for a moment and summarize what we have at this
point. We are given a $T$-ideal $\Gamma$ (the $T$-ideal of
identities of an affine algebra $W$). We assume that $W$ is
\textit{PI} and hence as shown in Section \ref{Start} there exists
a finite dimensional algebra $A$ with $\Gamma \supseteq \Id(A)$.
To the $T$-ideal $\Gamma$ we attach the corresponding Kemer index
in $Z^{\geq 0} \times Z^{\geq 0}$. Similarly, we may consider the
Kemer index of $\Id(A)$ which by abuse of notation we denote it by
$Ind(A)$. By Lemma \ref{comparison}, we have  $Ind(\Gamma) \leq
Ind(A)$.

One of our main goals (in the first part of the proof) is to
replace the algebra $A$ by a representable algebra $A^{'}$ with a larger
$T$-ideal such that

\begin{enumerate}

\item

$\Gamma \supseteq \Id(A^{'})$

\item

$\Gamma$ and $\Id(A^{'})$ have the same Kemer index.

\item

$\Gamma$ and $\Id(A^{'})$ have the ``same'' Kemer polynomials.

\end{enumerate}

\begin{remark}
The terminology ``the same Kemer polynomials'' needs a
clarification. Suppose $\Gamma_{1} \supseteq \Gamma_{2}$ are
$T$-ideals with $Ind(\Gamma_{1})=Ind(\Gamma_{2})$. We say that
$\Gamma_{1}$ and $\Gamma_{2}$ have the same Kemer polynomials if
there exists an integer $\mu$ such that all Kemer polynomials of
$\Gamma_{2}$ with at least $\mu$ alternating small sets are not in
$\Gamma_{1}$.
\end{remark}

\begin{remark}
Statements $(1)-(3)$ above will establish the important connection between the
combinatorics of the Kemer polynomials of $\Gamma$ and the
structure of finite dimensional algebras. The
``Phoenix'' property for the Kemer polynomials of $\Gamma$ will
follow from that connection.
\end{remark}

Let $A$ be a finite dimensional algebra over $F$ and let $J(A)$ be its Jacobson radical.
We know that $\overline{A}=A/J(A)$ is
semisimple. Moreover by the Wedderburn-Malcev
Principal Theorem there
exists a semisimple subalgebra $\overline{A}$
of $A$ such that $A= \overline{A} \oplus J(A)$ as
vector spaces. In addition, the subalgebra $\overline{A}$
may be decomposed as an algebra into the
direct product of (semisimple) simple algebras
$ \overline{A}\cong A_{1}\times A_{2}\times\cdots\times A_{q}$.

\begin{remark} This decomposition enables us to consider ``semisimple'' and
``radical'' substitutions. More precisely, since in order to check
whether a given multilinear polynomial is an identity of $A$ it is
sufficient to evaluate the variables on any (given) spanning set, we may
take a basis consisting of elements of $\overline{A} \cup J(A)$. We refer to such evaluations as semisimple or radical
evaluations respectively. Moreover, the semisimple substitutions
may be taken from the simple components.

\textit{In what follows, whenever
we evaluate a polynomial on a finite dimensional algebra, we
consider only evaluations of that kind.}
\end{remark}
\bigskip

For any finite dimensional algebra $A$ over $F$ we let $d(A)$ be
the dimension of the semisimple subalgebra and $n_{A}$ the
nilpotency index of $J(A)$. We denote by $Par(A)= (d(A), n_{A}-1)$
the parameter of the algebra $A$.

\begin{lemma} \label{polynomials with many alternating sets are identities}

Let $A$ be a finite dimensional algebra over $F$ and $Par(A)=
(d(A), n_{A}-1)$ its parameter. If $f$ is a multilinear polynomial
with at least $n_{A}$ alternating sets $\{X_{l}\}_{l}$, each of
cardinality $d(A)+1$, then $f$ is an identity of $A$.

\end{lemma}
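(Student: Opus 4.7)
The plan is to show that $f$ vanishes on every admissible evaluation, where (using the preceding remark) an \emph{admissible} evaluation assigns each variable to a basis element coming from a fixed basis $\mathcal{B}=\mathcal{B}_{ss}\cup\mathcal{B}_{rad}$ of $A$, with $\mathcal{B}_{ss}$ a basis of $\overline{A}$ and $\mathcal{B}_{rad}$ a basis of $J(A)$. Call a substitution \emph{semisimple} or \emph{radical} accordingly.

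First I would analyze a single alternating set $X_{l}=\{x^{(l)}_1,\ldots,x^{(l)}_{d(A)+1}\}$. Fix an admissible evaluation $\varphi$. Writing $f=\sum_{\sigma\in Sym(d(A)+1)}(-1)^{\sigma}h(x^{(l)}_{\sigma(1)},\ldots,x^{(l)}_{\sigma(d(A)+1)},\ldots)$ for that alternating slot, if two of the values $\varphi(x^{(l)}_i),\varphi(x^{(l)}_j)$ coincide the corresponding pair of terms in the alternating sum cancel, so $\varphi(f)$ vanishes unless $\varphi$ assigns pairwise distinct basis elements to $X_l$. Since $|\mathcal{B}_{ss}|=\dim_F\overline{A}=d(A)<d(A)+1=|X_l|$, the pigeonhole principle forces at least one variable of $X_l$ to be assigned a radical basis element whenever $\varphi(f)\ne 0$.

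Next I would repeat this for each of the $n_{A}$ alternating sets. Because the sets $X_{1},\ldots,X_{n_{A}}$ are pairwise disjoint, a nonvanishing admissible evaluation must contain at least $n_{A}$ distinct variables evaluated in $J(A)$. I then examine a single monomial $c_{1}c_{2}\cdots c_{m}$ produced by such an evaluation: letting $i_{1}<i_{2}<\cdots<i_{n_{A}}$ be indices with $c_{i_{j}}\in J(A)$, I group the product as $d_{0}\cdot d_{1}\cdots d_{n_{A}}$ with $d_{0}=c_{1}\cdots c_{i_{1}-1}$ and $d_{j}=c_{i_{j}}c_{i_{j}+1}\cdots c_{i_{j+1}-1}$ (for $j=1,\ldots,n_{A}-1$) and $d_{n_{A}}=c_{i_{n_{A}}}\cdots c_{m}$. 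Since $J(A)$ is a two-sided ideal, each $d_{j}$ with $j\geq 1$ lies in $J(A)$; hence $d_{1}\cdots d_{n_{A}}\in J(A)^{n_{A}}=0$, and the monomial vanishes. Summing over all monomials of $f$ one concludes $\varphi(f)=0$.

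I do not anticipate a genuine obstacle; the only place one needs to be a bit careful is the reduction to evaluations split into semisimple and radical basis elements (justified by the remark preceding the lemma) and the grouping argument showing that the presence of $n_{A}$ radical factors anywhere in a product of elements of $A$ forces the product into $J(A)^{n_{A}}$. Once those two observations are in place the rest is immediate pigeonholing.
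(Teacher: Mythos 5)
Your proof is correct and follows the same route as the paper: reduce to semisimple/radical basis evaluations, use the pigeonhole principle and alternation to force at least one radical value in each of the $n_{A}$ disjoint big sets, and then kill the resulting monomials by $J(A)^{n_{A}}=0$. The only difference is that you spell out the cancellation and the grouping of radical factors explicitly, which the paper leaves implicit.
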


\begin{proof}
If one of the sets $X_{l}$ is evaluated only with semisimple
elements, by the pigeonhole principle we must have repetitions,
and hence by the alternation the polynomial vanishes. Otherwise
all sets $X_{l}$ get at least one radical evaluation and again the
polynomial vanishes since their number is at least the nilpotency
index of $J(A)$.
\end{proof}

\begin{proposition}\label{index-par inequality}

Let $(\alpha, s)$ be the index of $A$. Then $(\alpha, s) \leq
(d(A), n_{A}-1)$.

\end{proposition}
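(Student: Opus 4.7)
The plan is to use Lemma \ref{polynomials with many alternating sets are identities} as the sole ingredient and derive $(\alpha, s) \leq (d(A), n_A - 1)$ by contradicting the definition of the Kemer index of $A$. Unpacking the lexicographic order, the negation $(\alpha, s) > (d(A), n_A - 1)$ splits into two alternatives: either $\alpha \geq d(A) + 1$, or else $\alpha = d(A)$ together with $s \geq n_A$. I would handle these two alternatives separately.

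In the first alternative, suppose $\alpha \geq d(A) + 1$. By the construction of $\alpha = d_{\Id(A)}$, for every $\nu$ there is a multilinear polynomial $f_\nu \notin \Id(A)$ carrying $\nu$ disjoint alternating sets of cardinality $\alpha$. I would take $\nu = n_A$, obtaining $n_A$ disjoint alternating sets of cardinality $\alpha \geq d(A)+1$ inside $f_\nu$. The proof of Lemma \ref{polynomials with many alternating sets are identities} now goes through verbatim: a purely semisimple evaluation of any one of these sets forces a repeated basis element by pigeonhole (there are more than $d(A)$ slots in a $d(A)$-dimensional semisimple subalgebra) and antisymmetry kills the contribution; otherwise each of the $n_A$ sets contributes at least one radical factor, and the product lies in $J(A)^{n_A} = 0$. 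Hence $f_\nu \in \Id(A)$, a contradiction.

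In the second alternative, suppose $\alpha = d(A)$ and $s \geq n_A$. By the definition of $s$, for any $\nu \geq \mu$ there is a multilinear polynomial $f \notin \Id(A)$ with $\nu$ disjoint alternating sets of cardinality $d(A)$ together with $s \geq n_A$ further disjoint alternating sets of cardinality $d(A) + 1$. The latter $n_A$ sets meet the hypothesis of Lemma \ref{polynomials with many alternating sets are identities} directly, forcing $f \in \Id(A)$, again a contradiction. Both alternatives being excluded, the desired inequality $(\alpha, s) \leq (d(A), n_A - 1)$ follows.

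There is essentially no obstacle; the only mildly nontrivial point is the tacit extension of Lemma \ref{polynomials with many alternating sets are identities} to alternating sets of cardinality strictly greater than $d(A) + 1$ used in the first alternative. This extension is immediate because the lemma's proof only relies on the inequality "cardinality $> d(A)$" for the pigeonhole step, so no new argument is needed.
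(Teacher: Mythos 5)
Your proof is correct and follows essentially the same route as the paper: both arguments split the lexicographic negation into the cases $\alpha > d(A)$ and $\alpha = d(A),\ s \geq n_A$, and in each case invoke Lemma \ref{polynomials with many alternating sets are identities} to force the witnessing nonidentity to vanish, a contradiction. Your explicit remark that the lemma extends to alternating sets of cardinality strictly larger than $d(A)+1$ is a point the paper uses tacitly, and your justification of it is accurate.
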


\begin{proof}
By the definition of the parameter $\alpha$, there exist
nonidentity polynomials with arbitrary large number of alternating
sets of cardinality $\alpha$. This says that $\alpha \leq d(A)$
for if $\alpha > d(A)$, by the previous lemma we cannot have in a
nonidentity more than $n(A)-1$ alternating sets of cardinality
$\alpha$. In order to complete the proof of the proposition we
need to show (by the lexicographic ordering) that if $\alpha=d(A)$
then $s \leq n_{A}-1$. If not, by the definition of $Ind(A)$ there
exists a nonidentity of $A$ with $s > n_{A}-1$ alternating sets of
cardinality $\alpha+1$ ($=d(A)+1$) which is again impossible by
the previous lemma. This proves the proposition.

\end{proof}

In the next example we show that the index of $A$ may be
quite far from $\Par(A)=(d(A), n_{A}-1)$.

\begin{example}

Let B be a finite dimensional simple algebra, $r > 1$,  $B(r)= B \times\cdots\times B$  ($r$ times). Clearly $\Id(B)=\Id(B(r))$
and hence $B$ and $B(r)$ have the same Kemer index. On the other hand $Par(B(r))$ increases with $r$:
$Par (B)=(d(B),1)< (r\cdot d(B),1)=Par(B(r))$.

\end{example}

In order to establish a precise
relation between the index of a finite dimensional
algebra $A$ and its structure we need to find appropriate finite
dimensional algebras which will serve as \underline{minimal
models} for a given Kemer index.

\begin{definition}
Let $A$ be a finite dimensional algebra over $F$. We say $A$ is
\textit{basic} if $A$ is not PI equivalent to an algebra $B$ where $B=B_1
\times \cdots \times B_r$, $B_i$ are finite dimensional algebras
over $F$ and $Par(B_i) < Par(A)$ for $i=1,\ldots,r$.
\end{definition}

\begin{remark}
The above definition of a basic algebra, as well as some definitions below, are different from those in \cite{Al-Bel}.

\end{remark}

In Proposition \ref{index-par inequality} we showed that $Ind(A)
\leq Par(A)$ for any finite dimensional algebra. In the next lemma
we show that if $A$ is not basic then the inequality is strict.

\begin{lemma}
Let $A$ be a finite dimensional nonbasic algebra. Then $Ind(A) <
Par(A)$.
\end{lemma}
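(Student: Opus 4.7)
The plan is to use the nonbasic hypothesis directly: it gives us a PI-equivalent decomposition $A \sim B = B_1 \times \cdots \times B_r$ with $\Par(B_i) < \Par(A)$ for every $i$, so $\Id(A)=\Id(B)$ forces $\Ind(A)=\Ind(B)$. The problem therefore reduces to understanding how the Kemer index behaves under a finite direct product and comparing it with Proposition \ref{index-par inequality}. My claim is that $\Ind(B)=\max_{i} \Ind(B_i)$ in the lexicographic ordering, from which the result is immediate: by Proposition \ref{index-par inequality} each $\Ind(B_i) \leq \Par(B_i) < \Par(A)$, so taking the maximum preserves strict inequality against $\Par(A)$.

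First I would establish the easy observation that a multilinear $f$ is a nonidentity of $B$ iff it is a nonidentity of some $B_i$, since $\Id(B)=\bigcap_i \Id(B_i)$. Consequently, in the notation of Section \ref{the index of $T$-ideals},
$$
\Delta^{(\Id(B))}_{\nu}=\bigcup_{i=1}^{r}\Delta^{(\Id(B_i))}_{\nu}, \qquad d_{\nu}(\Id(B))=\max_{i} d_{\nu}(\Id(B_i)).
$$
Each sequence $\{d_{\nu}(\Id(B_i))\}_{\nu}$ is a nonincreasing sequence of nonnegative integers and there are only finitely many indices $i$; hence for all sufficiently large $\nu$ every $d_{\nu}(\Id(B_i))$ has reached its limit $d_{\Id(B_i)}$, and therefore $d_{\Id(B)}=\max_i d_{\Id(B_i)}$. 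Call this common limiting value $d^{\ast}$.

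Next I would run the same argument on the second parameter, restricting attention to the indices $I:=\{i: d_{\Id(B_i)}=d^{\ast}\}$: a polynomial realizing arbitrarily many alternating sets of cardinality $d^{\ast}$ outside $\Id(B)$ must be a nonidentity of some $B_i$ with $i\in I$ (for $i\notin I$ the limit $d_{\Id(B_i)}$ is strictly smaller, so for $\nu$ large no such polynomial exists outside $\Id(B_i)$). Hence $S^{(\Id(B))}_{\nu}=\bigcup_{i\in I} S^{(\Id(B_i))}_{\nu}$ for large $\nu$, and again by finiteness of $I$ and monotonicity the limits commute with the maximum, giving $s_{\Id(B)}=\max_{i\in I} s_{\Id(B_i)}$. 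Combining the two parameters in lex order yields $\Ind(B)=\max_i \Ind(B_i)$.

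Finally, from Proposition \ref{index-par inequality} we have $\Ind(B_i)\leq \Par(B_i)$, and the nonbasic hypothesis provides $\Par(B_i)<\Par(A)$ for every $i$. Taking the lexicographic maximum over the finite index set,
$$
\Ind(A)=\Ind(B)=\max_{i}\Ind(B_i)\leq \max_{i}\Par(B_i) < \Par(A),
$$
which is the desired conclusion. The only subtle step is the commutation of limit and maximum used to get $\Ind(B)=\max_i\Ind(B_i)$; this is the main (and really only) point requiring care, and it rests entirely on the finiteness of the number of factors together with the integer-valued, bounded, nonincreasing character of the sequences $\{d_{\nu}\}$ and $\{s_{\nu}\}$.
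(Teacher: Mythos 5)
Your proof is correct, but it takes a different route from the paper's. The paper argues by contradiction: assuming $Ind(A)=Par(A)$, it takes a Kemer polynomial $f$ of $A$ with sufficiently many small sets of cardinality $d(A)$ and exactly $n_A-1$ big sets, and then applies Lemma \ref{polynomials with many alternating sets are identities} directly to each factor $B_i$ (splitting into the cases $d(B_i)<d(A)$, and $d(B_i)=d(A)$ with $n_{B_i}<n_A$) to conclude that $f\in \Id(B_i)$ for all $i$ once $\mu\geq\max_i n_{B_i}$, contradicting the PI equivalence of $A$ and $B$. You instead prove the general product formula $Ind(B_1\times\cdots\times B_r)=\max_i Ind(B_i)$ (in the lexicographic order), which you obtain purely from the $T$-ideal combinatorics $\Id(B)=\cap_i\Id(B_i)$, $\Delta^{(\Id(B))}_\nu=\cup_i\Delta^{(\Id(B_i))}_\nu$, and the commutation of limits with a maximum over finitely many nonincreasing integer sequences (with the correct restriction to the factors realizing the limiting $d^{\ast}$ when treating the second parameter); Lemma \ref{polynomials with many alternating sets are identities} then enters only through Proposition \ref{index-par inequality} applied to each $B_i$. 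The paper's argument is shorter and produces an explicit separating polynomial, which also foreshadows how Kemer polynomials with many small sets are used later; your argument isolates a clean, reusable statement about how the Kemer index behaves under finite direct products (a fact the paper uses implicitly later, e.g.\ when writing $d=\max_r d_r$ and $s=\max_{q\in\Psi}(n_{A_q}-1)$ for a product of basic algebras), at the cost of the limit/maximum bookkeeping, which you handle correctly.
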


\begin{proof}
If $A$ is nonbasic, there exists an algebra $B=B_1 \times \cdots
\times B_r$, PI equivalent to $A$, where $B_i$ is a finite
dimensional algebra over $F$ and $Par(B_i) < Par(A)$ for
$i=1,\ldots,r$. We know by Lemma \ref{polynomials with many
alternating sets are identities} that $Ind(B_i) \leq Par(B_i)$.
Suppose $Ind(A)=Par(A)=(d(A), n_{A}-1)$. Then for any $\mu$ (by
definition of the Kemer index), there exists a multilinear
polynomial $f$, nonidentity of $A$, with $\mu$-folds of
alternating sets of cardinality $d(A)$ and precisely $n_{A}-1$
alternating sets of cardinality of $d(A)+1$ (a Kemer polynomial).
We claim however, that if $\mu$ is sufficiently large the
polynomial $f$ is an identity of $B_{i}$, $i=1,\ldots,r$, and
hence an identity of $B=B_1 \times \cdots \times B_r$ showing that
the algebras $A$ and $B$ are not PI equivalent. Indeed, since
$Par(B_i) < Par(A)$ we either have $d(B_i) < d(A)$ or else $d(B_i)
= d(A)$ and $n_{B_{i}} < n_{A}$. In the first case we have
$d(B_i)+1 \leq d(A)$ and hence by Lemma \ref{polynomials with many
alternating sets are identities} $f$ is an identity as long as
$\mu$, the number of alternating sets of cardinality $d(A)$, is
$\geq n_{B_{i}}$. If $d(B_i) = d(A)$ and $n_{B_{i}} < n_{A}$, then
again by Lemma \ref{polynomials with many alternating sets are
identities}, $f$ is an identity of $B_{i}$ since it has $n_{A}-1
\geq n_{B_{i}}$ alternating sets of cardinality
$d(B_{i})+1=d(A)+1$. We see that any Kemer polynomial of $A$ with
$\mu$ alternating sets of cardinality $d(A)$, $\mu \geq \max
\{n_{B_{1}},\dots,n_{B_{r}}\}$, is an identity of $B$. This completes the proof of the lemma.

\end{proof}

Our main task in this section and the next two is to show that if
$A$ is basic then $Ind(A)=Par(A)$. For the proof of that statement
we introduce two properties (of finite dimensional algebras),
named \textit{full} and \textit{property $K$}. We'll show that any
basic algebra $A$ must satisfy both conditions. Then the main task
will be to show that an algebra $A$ which is full and satisfies
property $K$ has $Ind(A)=Par(A)$. Finally, applying the previous
lemma we'll obtain the following equivalences.

\begin{proposition} \label{equivalence conditions-basic}
The following conditions are equivalent for a finite dimensional algebra $A$.

\begin{enumerate}

\item
$A$ is basic

\item

$A$ is full and satisfies property $K$

\item

$Ind(A)=Par(A)$

\end{enumerate}

\end{proposition}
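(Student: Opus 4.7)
The plan is to close the triangle via (1) $\Rightarrow$ (2) $\Rightarrow$ (3) $\Rightarrow$ (1). The implication (3) $\Rightarrow$ (1) is free: the lemma just proved says that a nonbasic algebra has $\Ind(A)<\Par(A)$, so $\Ind(A)=\Par(A)$ forces basicness by contraposition. The remaining two implications carry the real content.

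For (1) $\Rightarrow$ (2) I would argue by contrapositive and structural surgery. If $A$ fails to be full, some Wedderburn block of $\bar A$ never appears in any nonidentity substitution (in the sense captured by whatever precise definition of ``full'' is given), so it can be peeled off; what is left is PI equivalent to a product of pieces of strictly smaller $d$, contradicting basicness. If $A$ fails property $K$, then no chain $\bar a_0 r_1 \bar a_1 r_2 \cdots r_{n_A-1} \bar a_{n_A-1}$ with $\bar a_j$ semisimple and $r_k\in J(A)$ is nonzero; concatenating radicals across certain pairs of simple components must always collapse, and this allows one to split $A$, up to PI equivalence, into finitely many pieces whose nilpotency index is strictly less than $n_A$. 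In either case the decomposition $A\sim_{PI} B_1\times\cdots\times B_r$ with $\Par(B_i)<\Par(A)$ contradicts basicness.

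The heart of the proposition is (2) $\Rightarrow$ (3). In view of Proposition \ref{index-par inequality} we already have $\Ind(A)\le\Par(A)$, so it suffices, for arbitrarily large $\mu$, to produce a multilinear polynomial $f\notin\Id(A)$ that is alternating on $\mu$ disjoint sets of cardinality $d(A)$ and on exactly $n_A-1$ further disjoint sets of cardinality $d(A)+1$. Property $K$ supplies a non-vanishing chain $\bar a_0 r_1 \bar a_1 \cdots r_{n_A-1}\bar a_{n_A-1}$. The polynomial $f$ is built by attaching to each ``radical position'' a block of $d(A)+1$ alternating variables which, on evaluation, run over a basis of $\bar A$ together with the radical $r_k$; these are the $n_A-1$ big sets. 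Into a distinguished semisimple position one stacks $\mu$ further alternating blocks of $d(A)$ variables each, to be substituted by bases of $\bar A$; these are the small sets, and their combined effect is a product of $\mu$ Regev-type central determinants on $\bar A$. Fullness is used precisely to guarantee that the rigged evaluation hits every simple component of $\bar A$ in a way that does not cancel.

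The main obstacle, as expected, is the non-vanishing verification for the constructed $f$. Under the alternations one has an enormous signed sum, and one must argue that the intended term survives. The standard mechanism is a Peirce/pigeonhole analysis on the semisimple and radical slots: any permutation inside a big set that displaces $r_k$ from its designated slot either forces two semisimple basis elements in the same simple component to coincide (killing the term by alternation), or else strings more than $n_A-1$ radical elements together (killing the term by nilpotency). Only the rigged term remains, and property $K$ ensures it is nonzero. Once (2) $\Rightarrow$ (3) is proved, the cycle closes and the three conditions are equivalent. I expect that the bookkeeping in (1) $\Rightarrow$ (2) will be routine once ``full'' and ``property $K$'' are formally in hand, and the technical work will be entirely concentrated in the construction and the Peirce argument of (2) $\Rightarrow$ (3).
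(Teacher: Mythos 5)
The serious gap is in your (2) $\Rightarrow$ (3), i.e. in your sketch of Kemer's Lemma 2. Your non-vanishing dichotomy --- every nontrivial permutation either repeats two semisimple basis elements of the same simple component (dies by alternation) or strings more than $n_A-1$ radical factors together (dies by nilpotency) --- does not cover the decisive case: swapping a radical variable whose bordering idempotents lie in the \emph{same} simple component, say $(1_{A_t},1_{A_t})$, with a semisimple variable of a small set bordered by that same idempotent. Such a term has no repeated semisimple values and still contains only $n_A-1$ radical factors, so neither alternation nor nilpotency kills it. In the paper this is exactly where property $K$ enters: after the swap the value can be rewritten as an evaluation of the original polynomial with at most $n_A-2$ radical substitutions, hence zero by $K$. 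You instead invoke property $K$ only to assert that the rigged term is nonzero, which is not its role (that term is nonzero because it reproduces the fixed nonvanishing evaluation realizing fullness). A second unaddressed point in the same step: you need fullness and property $K$ witnessed by the \emph{same} polynomial and evaluation before you can stack the small sets and border the radical slots; merging two separate witnesses is not automatic --- in the paper this is Lemma \ref{Full and property K}, whose proof uses basicness, and the subsequent proof of Kemer's Lemma 2 requires a case analysis (unital or not, one or several simple components, mixed versus equal bordering pairs) that your uniform recipe glosses over. Since your cycle does pass through (1), you could repair this by proving (1) $\Rightarrow$ (3) directly, but as written the step from (2) alone is unjustified.

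In (1) $\Rightarrow$ (2) the fullness half matches the paper (peeling off a missed simple component, Lemma \ref{decomposition into product of full}), but the property-$K$ half both misreads the negation and proposes an unsupported mechanism. Failure of $K$ means that \emph{every} multilinear nonidentity of $A$ has some nonzero evaluation with fewer than $n_A-1$ radical substitutions; it does not mean that all chains with $n_A-1$ radical factors vanish (indeed $J(A)^{n_A-1}\neq 0$). Moreover, no splitting of $A$ itself into pieces of smaller nilpotency index is available: the paper instead constructs a single new algebra $\widehat{A}_{n_A-1}=\overline{A}*F\{x_1,\dots,x_n\}/(I_1+I_2^{\,n_A-1})$, shows $\Id(\widehat{A}_{n_A-1})=\Id(A)$ by a multihomogeneity argument (a nonidentity admitting a nonzero evaluation with $k<n_A-1$ radical values stays outside $I_1+I_2^{\,n_A-1}$ because it is multihomogeneous of degree $k$ in the new variables), and notes that $\Par(\widehat{A}_{n_A-1})<\Par(A)$, contradicting basicness (Proposition \ref{property K}). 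Your (3) $\Rightarrow$ (1), by contraposition of the lemma that nonbasic implies $\Ind(A)<\Par(A)$, is correct and is essentially the paper's own argument.
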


\begin{definition}

We say that a finite dimensional algebra $A$ is \textit{full} if there exists a nonidentity multilinear polynomial $f$ such that every simple
component is represented (among the semisimple substitutions) on \textit{every} nonvanishing evaluation of $f$ on $A$. A
finite dimensional algebra $A$ is said to be full if it is full
with respect to some multilinear polynomial $f$.
\end{definition}

We wish to show that any finite dimensional algebra may be decomposed (up to
\textit{PI}-equivalence) into the direct product of full algebras.

\begin{lemma}\label{decomposition into product of full}

Let $A$ be a finite dimensional algebra over $F$ with $q$ simple components. If the algebra
$A$ is not full, then $A$ is \textit{PI}-equivalent to a finite dimensional algebra $B= B_1 \times \cdots \times B_q$, where
$($1$)$ $B_{i}$ has fewer than $A$
simple components for $i=1,\ldots,q$.
$($2$)$ $d(B_{i})< d(A)$ for $i=1,\ldots,q$ and hence $Par(B_{i})< Par(A)$.

\end{lemma}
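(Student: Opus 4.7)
The construction is forced by the definition of ``not full''. For each $i = 1, \ldots, q$ define the subspace
\[
B_i \;=\; \bigoplus_{j \neq i} A_j \;\oplus\; J(A) \;\subseteq\; A
\]
by deleting the $i$-th simple component from the Wedderburn--Malcev decomposition $A = A_1 \oplus \cdots \oplus A_q \oplus J(A)$. That $B_i$ is a subalgebra follows from two observations: $A_j A_k = 0$ for $j \neq k$ (and $A_j^2 \subseteq A_j$), so $\bigoplus_{j \neq i} A_j$ is a subalgebra of $\overline{A}$; and $J(A)$ is a two-sided ideal of $A$, hence absorbed on either side by any subspace of $A$.

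Next I read off the Wedderburn--Malcev data of $B_i$. The ideal $J(A) \subseteq B_i$ is nilpotent and the quotient $B_i / J(A) \cong \bigoplus_{j \neq i} A_j$ is semisimple with exactly $q - 1$ simple components; hence $J(B_i) = J(A)$ and $\overline{B_i}$ may be taken to be $\bigoplus_{j \neq i} A_j$. This yields both structural assertions at once: $B_i$ has $q - 1 < q$ simple components, and
\[
d(B_i) \;=\; \sum_{j \neq i} \dim_F A_j \;<\; \sum_{j=1}^q \dim_F A_j \;=\; d(A),
\]
which forces $\Par(B_i) < \Par(A)$ in the lexicographic ordering.

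To finish I verify that $\Id(A) = \bigcap_{i=1}^q \Id(B_i) = \Id(B_1 \times \cdots \times B_q)$. The inclusion ``$\subseteq$'' is immediate since each $B_i$ sits inside $A$. For the reverse, let $f$ be a multilinear nonidentity of $A$ (reducing to multilinear polynomials is allowed in characteristic zero). By the Remark on basis evaluations, $f$ admits a nonvanishing evaluation on $A$ in which every variable takes a value in $\overline{A} \cup J(A)$, with the semisimple substitutions drawn from individual components $A_j$. Because $A$ is not full, this evaluation may be chosen so that some simple component $A_i$ is never used among the semisimple substitutions; then every substituted value lies in $\bigoplus_{j \neq i} A_j \cup J(A) \subseteq B_i$, giving a nonvanishing evaluation of $f$ on $B_i$, so $f \notin \Id(B_i)$.

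The only delicate point is conceptual rather than technical: the index $i$ for which $f$ escapes $\Id(B_i)$ is allowed to depend on $f$. The ``not full'' hypothesis only provides, for each multilinear nonidentity separately, an offending evaluation that misses some simple component; no single $i$ need work uniformly across all $f$. This is exactly why the conclusion is phrased as PI-equivalence to the direct product $B_1 \times \cdots \times B_q$ rather than to any one $B_i$.
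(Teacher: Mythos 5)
Your proof is correct and follows essentially the same route as the paper: the paper's $B_i=\langle A_j, J : j\neq i\rangle$ is exactly your $\bigoplus_{j\neq i}A_j\oplus J(A)$, with $J(B_i)=J(A)$, and the PI-equivalence is proved by the same argument that a multilinear nonidentity of $A$ has (by non-fullness) a nonvanishing evaluation missing some $A_i$ and hence survives on that $B_i$. Your closing remark about the index $i$ depending on $f$ is precisely why the paper, too, passes to the product $B_1\times\cdots\times B_q$.
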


\begin{proof}

Note that if $q=0$, i.e. $A$ is radical (nonzero) then $A$ is
full with respect to any multilinear polynomial,
nonidentity of $A$. We therefore may assume that $q > 0$ and
suppose $A$ is not full. This means that any multilinear
polynomial, nonidentity of $A$, has a nonvanishing evaluation which
misses the simple component $A_{i}$ of $A$ for some $i$.

For any $i=1,\ldots, q$, consider the subalgebra $B_{i}=\langle
A_{j},J: j\neq i\rangle$ (i.e. the subalgebra generated by all
elements of $J$ and $A_{j}$, $j\neq i$). Note that $J(B_i)=J(A)=J$
and $B_i/J\cong \overline{B_i}=\prod_{j\neq i}A_{j}$. Consider the
product $B=B_{1} \times \cdots \times B_{q}$. We know that $\Id(A) \subseteq
\Id(B_{i})$, any $i$, and hence $\Id(A) \subseteq
\cap_{i}\Id(B_i)=\Id(B)$. For the converse, let $f$ be a
multilinear polynomial, nonidentity of $A$. By our assumption
above $f$ has a nonzero evaluation on $A$ which misses $A_{i}$ for
some $i$ and hence $f$ is a nonidentity of $B_{i}$. We obtain that
$f$ is a nonidentity of $B$ as desired.

\end{proof}

\begin{corollary}\label{product of Full algebras}
Every finite dimensional algebra $A$ is PI equivalent to a finite dimensional algebra $B=B_1\times \cdots \times B_m$, where $B_{i}$ is full for $i=1,\ldots,m$.

\end{corollary}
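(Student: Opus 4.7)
The plan is to prove the corollary by induction on $d(A)$, the dimension of the semisimple part of $A$, using Lemma \ref{decomposition into product of full} as the engine of the induction.

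For the base case $d(A)=0$ the algebra $A$ equals its Jacobson radical. If $A=0$ there is nothing to do (take the empty product, or observe that $A$ is trivially PI-equivalent to any full algebra with the same vanishing identities). If $A\neq 0$, then by the observation made at the start of the proof of Lemma \ref{decomposition into product of full}, any nonzero radical algebra is full with respect to every nonidentity multilinear polynomial, so one may take $m=1$ and $B_1=A$.

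For the inductive step, suppose the statement holds for every finite dimensional algebra whose semisimple part has dimension strictly less than $d(A)$. If $A$ is itself full, take $m=1$ and $B_1=A$. Otherwise, apply Lemma \ref{decomposition into product of full} to obtain a PI-equivalent finite dimensional algebra $B_1\times\cdots\times B_q$ whose factors satisfy $d(B_i)<d(A)$. The inductive hypothesis applied to each $B_i$ yields a finite decomposition $B_i\sim_{PI} C_{i,1}\times\cdots\times C_{i,m_i}$ where every $C_{i,j}$ is full. Since the $T$-ideal of a finite direct product is the intersection of the $T$-ideals of the factors, PI equivalence is compatible with direct products; combining the decompositions therefore gives
\[
A\sim_{PI}\prod_{i=1}^{q}\prod_{j=1}^{m_i}C_{i,j},
\]
which is a finite product of full finite dimensional $F$-algebras, completing the induction.

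The proof is essentially bookkeeping once Lemma \ref{decomposition into product of full} is in hand; the only point that requires any care is justifying that PI equivalence is preserved under direct products when one replaces a factor by a PI-equivalent one, which is immediate from $\Id(C\times D)=\Id(C)\cap\Id(D)$. There is no real obstacle since the lemma already provides a strict decrease in $d(\cdot)$, guaranteeing termination of the recursion.
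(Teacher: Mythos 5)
Your proof is correct and follows essentially the same route as the paper, which simply invokes Lemma \ref{decomposition into product of full} and leaves the induction (on the number of simple components or on $d(A)$, which strictly drops) implicit; you have merely made that induction, its radical base case, and the compatibility of PI equivalence with finite direct products explicit.
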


\begin{proof}

This follows from the corollary above (either from part (1) or (2)).
\end{proof}

\begin{corollary}

Let $A$ be a finite dimensional algebra. If $A$ is basic then $A$ is full.

\end{corollary}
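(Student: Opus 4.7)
The plan is to prove the contrapositive: if $A$ is not full, then $A$ is not basic. This is essentially a direct appeal to Lemma \ref{decomposition into product of full}, so the work is already done and only needs to be assembled correctly.

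First I would invoke Lemma \ref{decomposition into product of full}: assuming $A$ is not full (and noting that the case of $A$ being radical is vacuous since radical algebras are automatically full), the lemma produces a decomposition $B = B_1 \times \cdots \times B_q$ which is PI-equivalent to $A$ and whose factors $B_i$ satisfy $Par(B_i) < Par(A)$ for every $i$. This is exactly the configuration forbidden by the definition of a basic algebra.

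Therefore $A$ fails the defining condition of ``basic.'' Taking the contrapositive gives the statement: if $A$ is basic, then $A$ must be full. The only subtlety — and it is not really an obstacle, just a matter of careful reading — is to make sure the strict inequality $Par(B_i)<Par(A)$ that the lemma delivers matches the strict inequality used in the definition of \emph{basic}; both refer to the lexicographic ordering on the pair $(d(\cdot),n_{(\cdot)}-1)$, so no conversion is needed. No further ingredients are required beyond Lemma \ref{decomposition into product of full}.
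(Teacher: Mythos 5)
Your proof is correct and is essentially the paper's own argument: the paper likewise deduces non-basicness of a non-full $A$ from the PI-equivalent decomposition $B=B_1\times\cdots\times B_q$ with $Par(B_i)<Par(A)$ supplied by Lemma \ref{decomposition into product of full}. Your added remark that a radical algebra is automatically full (so the lemma's hypothesis $q>0$ causes no loss) is a harmless clarification already contained in that lemma's proof.
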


\begin{proof}
If $A$ is not full, by the previous corollary, $A$ is PI
equivalent to $B=B_1\times \cdots \times B_m$, where
$d(B_{i})<d(A)$ and hence, by definition, $A$ is not basic.

\end{proof}

Roughly speaking, a finite dimensional algebra $A$ is
\textit{full} if its simple components are ``connected'' via all
nonzero evaluations of a suitable nonidentity. ``Property K'' (see
Section \ref{Section: Kemer's Lemma $2$}), concerns with the
number of radical evaluations of nonidentity polynomials.

\end{section}

\begin{section}{Kemer's Lemma $1$}\label{Section: Kemer's Lemma $1$}

The task in this section is to show that if $A$ a finite
dimensional algebra which is full then the first parameter of
$Ind(A)$ and the first parameter of $Par(A)$ coincide.

\begin{proposition}\label{unique point}

Let $A$ be a finite dimensional algebra which is full. Let
$Ind(A)=(\alpha, s)$ and $Par(A)=(d(A), n_{A}-1)$. Then
$\alpha=d(A)$.

\end{proposition}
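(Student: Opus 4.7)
By Proposition~\ref{index-par inequality} we already have $\alpha \leq d(A)$, so the real content is the reverse inequality $\alpha \geq d(A)$. Equivalently, for every $\nu \geq 1$ I need to exhibit a multilinear polynomial $f_{\nu} \notin \Id(A)$ which alternates on $\nu$ pairwise disjoint sets of cardinality $d(A)$. The plan is to take the fullness witness $f_0$ and amplify it by many disjoint copies of an auxiliary polynomial $p$ that (a) alternates on a single block of $d(A)$ variables and (b) whose semisimple evaluations on $\overline{A}$ land in $Z(\overline{A})$ as elements having nonzero scalar image in every simple component.

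The construction of $p$ is the technical core. Write $\overline{A} = A_1 \times \cdots \times A_q$ with $d_i = \dim_F A_i$, so $d(A) = d_1 + \cdots + d_q$. For each simple component $A_i \cong M_{n_i}(F)$ (we may assume $F$ algebraically closed), invoke a Formanek--Razmyslov central polynomial: a multilinear $c_i$ with a distinguished alternating set of $d_i = n_i^2$ variables and a substitution yielding a nonzero central (scalar) element of $A_i$. Partition a joint alternating set $X = \{x_1,\ldots,x_{d(A)}\}$ into blocks of sizes $d_1,\ldots,d_q$ feeding into $c_1,\ldots,c_q$, and antisymmetrize over the full set:
\[
p(X;Y) \;=\; \sum_{\sigma \in \Sym(d(A))} (-1)^{\sigma}\, (c_1 c_2 \cdots c_q)\bigl(x_{\sigma(1)},\ldots,x_{\sigma(d(A))};Y\bigr),
\]
which is alternating on $X$ by construction. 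Evaluate $X$ on the basis of $\overline{A}$ obtained by concatenating bases of $A_1,\ldots,A_q$, and the $Y_i$-variables of $c_i$ in $A_i$. The orthogonality $A_iA_j = 0$ for $i \neq j$ inside $\overline{A}$ kills every summand for which $\sigma$ does not preserve the block partition; the surviving permutations factor as $\prod_i \Sym(d_i)$ and contribute $\prod_i d_i! \cdot c_i(\text{basis of }A_i;Y_i)$, which is a nonzero central element $\zeta \in Z(\overline{A})$ with nonzero scalar in each simple component.

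Assembling, I would set $f_{\nu} := p^{(1)} p^{(2)} \cdots p^{(\nu)} \cdot f_0$ on pairwise disjoint variable sets. By construction $f_{\nu}$ has $\nu$ disjoint alternating sets of cardinality $d(A)$. To see it is a nonidentity, evaluate $f_0$ on the fullness substitution and each $p^{(k)}$ on the substitution above: the $p^{(k)}$-factors multiply to $\zeta^{\nu} \in Z(\overline{A})$, which acts on the simple-component decomposition of the $f_0$-output by a nonzero scalar on each $A_i$. Fullness is essential here: it guarantees that the $f_0$-evaluation has contributions in every $A_i$, so scaling by $\zeta^{\nu}$ cannot annihilate it.

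The main obstacle I anticipate is the non-vanishing analysis of $p$ on $\overline{A}$: the full antisymmetrization must allow only the block-preserving permutations to contribute, and this rests sensitively on the orthogonality of the simple components together with the centrality of each $c_i$ on $A_i$. A secondary subtlety is the passage from ``$\zeta^{\nu}$ is a nonzero central element'' to ``$\zeta^{\nu} f_0(\cdot)$ is nonzero as an element of $A$'', which is precisely where fullness is indispensable and cannot be relaxed.
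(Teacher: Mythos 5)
The reduction to producing, for every $\nu$, a nonidentity with $\nu$ disjoint alternating sets of cardinality $d(A)$ is the right target, but your construction of the prefactor $p$ fails at its central claim. In $\overline{A}=A_{1}\times\cdots\times A_{q}$ the simple components are orthogonal, $A_{i}A_{j}=0$ for $i\neq j$ --- the very fact you invoke to kill the block-violating permutations. That same fact kills the block-preserving ones: each surviving summand is the \emph{product} of the values $c_{i}(\text{basis of }A_{i};Y_{i})=\lambda_{i}1_{A_{i}}$, and $1_{A_{1}}1_{A_{2}}\cdots 1_{A_{q}}=0$ as soon as $q\geq 2$. So the element you call $\zeta$ is $0$, not a central element with nonzero scalar in every component; indeed no product of evaluations can ever be supported on more than one simple component, since a nonzero product in $\overline{A}$ forces all factors into the same $A_{i}$. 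An element such as $\sum_{i}\lambda_{i}1_{A_{i}}$ can only arise as a \emph{sum} over components, and a single polynomial prefactor multiplied in front of $f_{0}$ cannot deliver it. There is also a secondary gap: even granted such a $\zeta$, one has $\zeta^{\nu}a\neq 0$ if and only if $1_{\overline{A}}a\neq 0$, and fullness does not give this --- it constrains the \emph{inputs} of nonzero evaluations (every component is visited among the substituted values), not the left support of the output $a$, which may well be a radical element annihilated by $1_{\overline{A}}$ when $A$ is not unital.

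This is exactly the difficulty the paper's proof of Lemma \ref{full-folds} is designed to circumvent: instead of placing the alternating material in front as a standalone factor, one inserts a separate polynomial $Z_{i}$, alternating in $\nu$ sets of cardinality $\dim_{F}(\overline{A}_{i})$ and taking the value of a diagonal idempotent $E^{i}_{j_i,j_i}$, at the position of the variable $x_{i}$ of the full polynomial $f$ that is evaluated in the $i$-th component, and only afterwards alternates the corresponding $p$-th sets \emph{across} the different $Z_{i}$'s. The connection between the orthogonal components is thus supplied by the fixed nonzero evaluation of $f$ itself (including its radical substitutions), and nonvanishing is checked on that explicit evaluation, where any nontrivial cross-component permutation dies against the bordering frame idempotents. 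If you want to salvage your approach you must likewise thread your central polynomials through the body of $f_{0}$ (one per component, at the positions witnessing fullness) rather than multiply their values together; as written, $f_{\nu}$ evaluates to $0$ on the proposed substitution whenever $q\geq 2$, and the argument does not establish $\alpha\geq d(A)$.
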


\begin{proof}

For the proof we need to show that for an arbitrary large integer
$\nu$ there exists a multilinear nonidentity $f$ that contains
$\nu$ folds of alternating sets of cardinality
$\dim_{F}(\overline{A})$.

Since the algebra $A$ is full, there is a multilinear polynomial
$f(x_{1},\ldots,x_{q}, \overrightarrow{y})$, which does not vanish
upon an evaluation of the form $x_{j}= \overline{x}_{j} \in
\overline{A}_{j}$, $j=1,\ldots,q$ and the variables of
$\overrightarrow{y}$ get values in $A$ (either in simple
components or in the radical). The idea is to produce polynomials
$\widehat{f}$'s in the $T$-ideal generated by $f$ which remain
nonidentities of $A$ and that reach eventually the desired form.
The way one checks that the polynomials $\widehat{f}$'s are
nonidentities is by presenting suitable evaluations on which they
do not vanish. Let us reformulate what we need in the following
lemma.

\begin{lemma}[Kemer's Lemma $1$ for finite dimensional algebras] \label{full-folds}

Notation as above. Let $A$ be a finite dimensional algebra which is full with respect to the polynomial $f=f(x_{1},\ldots,x_{q},
\overrightarrow{y})$. Then for any integer $\nu$ there
exists a multilinear polynomial $f^{'}$ in the $T$-ideal generated by $f$ with the following properties:
\begin{enumerate}
\item $f^{'} \notin \Id(A)$
\item $f^{'}$ has $\nu$-folds of alternating sets of cardinality
$\dim_{F}(\overline{A})$.

\end{enumerate}

\end{lemma}

\begin{proof}
We note that if the algebra is radical, then the lemma is clear.

Let $f_{0}$ be the polynomial obtained from $f$ by multiplying (on
the left say) each one of the variables $x_{1},\ldots,x_{q}$ by
variables $z_{1},\ldots,z_{q}$ respectively. Note that the
polynomial obtained, denoted by $f_{1}$, is a nonidentity since
the variables $z_{i}$'s may be evaluated by the elements
${1_{\overline{A}_i}}$'s where

$${1_{\overline{A}_i}} = E^{i}_{1,1}+\cdots + E^{i}_{k_i,k_i}.$$
(Here we use the notation, $\overline{A}_i= M_{k_i}(F)$.)

By linearity there exists a nonzero evaluation where the variables
$z_{1},\ldots,z_{q}$ take values of the form
$E^{1}_{j_1,j_1},\ldots,E^{q}_{j_q,j_q}$ where $1\leq j_i \leq
k_i$ and $i=1,\ldots,q$.

Our aim is to replace each one of the variables
$z_{1},\ldots,z_{q}$ by polynomials $Z_{1},\ldots,Z_{q}$ such
that:

\begin{enumerate}

\item

For every $i=1,\ldots,q$, the polynomial $Z_{i}$ is alternating in
$\nu$-folds of sets of cardinality $\dim_{F}(\overline{A}_{i})$.

\item

For every $i=1,\ldots,q$, the polynomial $Z_{i}$ assumes the value
$E^{i}_{j_i,j_i}$.

\end{enumerate}

Once this is accomplished, we complete the construction by
alternating the corresponding $p$th sets, $p=1,\ldots,\nu$, which come from different $Z_{i}$'s. Clearly,
the polynomial $f^{'}$ obtained

\begin{enumerate}

\item

is a nonidentity since any nontrivial alternation of the evaluated
variables (as described above) vanishes.

\item

$f^{'}$ has $\nu$-folds of alternating sets of cardinality
$\dim_{F}(\overline{A})$.

\end{enumerate}

We now show how to construct the polynomials $Z_{i}$.

In order to simplify the notation we put
$\widehat{A}=\overline{A}_{i}$ ($\cong M_{k}(F)$) where
$\overline{A}_{i}$ is the $i$-th simple component.

Fix $1\leq t \leq k $ and consider a product of the $k^{2}$
different matrix units $E_{i,j}$ of $M_{k}(F)$ with value
$E_{t,t}$ (it is not difficult to show the such a product exists).
We refer to these matrix units $E_{i,j}$ as \textit{designated
matrices}. Next, we border each matrix unit $E_{i,j}$ with
idempotents $E_{i,i}$ and $E_{j,j}$. We refer to these idempotent
matrices as \textit{frames}. Clearly the product of all matrices,
namely designated matrices and frames is $E_{t,t}$. Now we
construct a Capelli polynomial which corresponds to that word of
matrix units: We construct a monomial with variables which
are in $1-1$ correspondence with the word of matrix units just mentioned, namely the designated and frame matrices. We make a
distinction between variables that correspond to designated
matrices ($u_{1}, \ldots, u_{n}, \ldots$) and variables that
correspond to frame matrices ($y_{1}, \ldots, y_{m}, \ldots$). We
denote that monomial by $\Sigma_{1}{t}$. The subscript $1$ stands
for the fact that the monomial has exactly one set of designated
variables whereas the subscript $t$ stands for the fact that there
is an evaluation with value $E_{t,t}$. Next we consider the product of
$\nu$ monomials $\Sigma_{1}{t}$ (with different variables). We denote the long monomial
obtained by $\Sigma_{\nu}{t}$. In order to complete the construction of the polynomials $Z_{i}$, we consider the polynomial $\hat{Z}_{\nu}{t}$ obtained by alternating each set of
designated variables \textit{separately}. We
let $Z_{i}=\hat{Z}_{\nu}{j_{i}}$, for each $i=1,\ldots,q$. Once again for each
$p=1,\dots,\nu$, we consider the $p$th set of designated variables
in each polynomial $Z_{i}$ and as indicated above we alternate
these variables among the different $i=1, \ldots, q$. It is clear
that if we evaluate the variables accordingly, any nontrivial
permutation yields a zero value and hence get a nonidentity of $A$
of the desired form.

\end{proof} \end{proof}

\begin{remark}
Let us return once again to the definition of the \textit{full}
property of a finite dimensional algebra $A$. By definition, a
finite dimensional algebra $A$ is full if there exists a
multilinear polynomial, nonidentity of $A$ such that all simple
components are represented in any nonzero evaluation on $A$.
Nevertheless, for the proof of Kemer's Lemma $1$ we used a
seemingly weaker condition, namely the existence of a multilinear
nonidentity of $A$ which has a nonzero evaluation which visits all
simple components of $A$. These two condition are indeed
nonequivalent for a given polynomial. However, as we see below, it follows from
Kemer's Lemma 1, the $T$-ideal generated by a
nonidentity $f$ which satisfies the weaker condition contains a
polynomial $f'$ which satisfies the stronger condition.

\end{remark}

\begin{corollary}

Let $A$ be a finite dimensional algebra. Then $A$ is full if and
only if there exists a multilinear polynomial $f$, nonidentity of $A$ which \textit{admits} a nonzero evaluation which visits every simple components of $A$.

\end{corollary}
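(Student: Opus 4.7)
The forward direction is immediate from the definition of fullness, so all effort goes into the converse. Suppose $f$ is a multilinear nonidentity of $A$ admitting \emph{some} nonzero evaluation that visits every simple component of $A$; the goal is to exhibit a multilinear nonidentity $f'$ such that \emph{every} nonzero evaluation of $f'$ visits every simple component. The plan is to feed $f$ into the construction inside the proof of Kemer's Lemma 1 (Lemma \ref{full-folds}). As the preceding remark points out, that construction uses only the weaker hypothesis: a single nonzero evaluation of $f$ of the form $x_j \mapsto \bar{x}_j \in \overline{A}_j$, with the remaining variables taking semisimple or radical values. Applying Lemma \ref{full-folds} with $\nu = n_A$, the nilpotency index of $J(A)$, produces a polynomial $f' \in \langle f \rangle$ which is a nonidentity of $A$ and carries $n_A$ disjoint alternating sets $X_1,\ldots,X_{n_A}$, each of cardinality $d = \dim_F(\overline{A})$.

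The key step is then to verify that such an $f'$ automatically satisfies the stronger fullness condition. Consider any nonzero evaluation of $f'$ on the standard basis of $A = \overline{A} \oplus J(A)$, and let $r_p$ be the number of radical substitutions inside $X_p$. By multilinearity every monomial of $f'$ uses the same set of variables, so the whole evaluation lies in $J(A)^{R}$ with $R = \sum_{p} r_{p}$. Nonvanishing forces $R \le n_A - 1$, and since there are $n_A$ alternating sets, the pigeonhole principle yields some index $p_0$ with $r_{p_0} = 0$. Alternation on $X_{p_0}$ then forces its $d$ variables to be substituted by $d$ pairwise distinct semisimple basis elements---that is, a basis of $\overline{A}$---and this basis necessarily contains an element of every simple component $A_i$, as required.

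The only subtle point, and the expected obstacle, is confirming that the construction inside Lemma \ref{full-folds} genuinely depends only on the weaker hypothesis, so that it may legitimately be applied to our $f$. This is a matter of reinspecting that proof rather than supplying new ideas; once it is granted, the remainder is the short pigeonhole argument above, whose sole purpose is to convert ``many small alternating sets'' into ``at least one fully semisimple alternating set''.
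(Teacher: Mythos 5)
Your proposal is correct and follows essentially the same route as the paper: apply Kemer's Lemma 1 (which, as the preceding remark notes, only needs the single full evaluation of $f$) to obtain $f'\in\langle f\rangle$ with at least $n_A$ alternating sets of cardinality $d(A)$, then observe that in any nonzero evaluation not all of these sets can contain a radical value, so some set is purely semisimple and by alternation its values form a basis of $\overline{A}$, hence visit every simple component. Your pigeonhole formulation with $R=\sum_p r_p\le n_A-1$ is just a slightly more explicit phrasing of the paper's same counting argument.
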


\begin{proof}
Clearly, if $A$ is full the condition is satisfied. In order to prove the
opposite direction we need to show that if $f$ is
a multilinear polynomial, nonidentity of $A$ with a nonzero
evaluation on $A$ which visits any simple component then there
exists a multilinear polynomial $f'$, nonidentity of $A$, which
visits every simple component of $A$ on every nonzero evaluation.
Indeed, by Kemer lemma $1$ we know there exists a multilinear
polynomial $f'$ in the $T$-ideal $\langle f \rangle$, nonidentity
of $A$, which alternates on $\nu$ disjoint sets of variables of
cardinality $d(A)$, where $\nu$ is arbitrary. Now, if we take $\nu
\geq n_{A}$, then in any nonzero evaluation of $f'$, at least one
alternating set of cardinality $d(A)$ must be evaluated by only
semisimple elements for otherwise in each each alternating set
there is a variable which gets a radical value. Of course any such
evaluation vanishes since at least $n_{A}$ variables of $f'$ get
radical values. But if in any nonzero evaluation of $f'$, there is
one alternating set of cardinality $d(A)$ which gets only
semisimple values, by the alternation, these values must be
linearly independent over $F$ and hence consist of a basis of the
semisimple subalgebra $\overline{A}$. This shows that any nonzero
evaluation of $f'$ visits all simple components as desired.

\end{proof}
\end{section}

\begin{section}{Kemer's Lemma $2$}\label{Section: Kemer's Lemma $2$}

In this section we prove Kemer's Lemma $2$. Before stating the
precise statement we need an additional reduction which enables us
to control the number of radical evaluations in certain
nonidentities.

Let $f$ be a multilinear polynomial which is not in $\Id(A)$.
Clearly, any nonzero evaluation cannot have more than $n_{A}-1$
radical evaluations.

\begin{lemma} Let $A$ be an algebra which is full. Let $Ind(A)=(\alpha, s)$ be its Kemer index.
Then $s \leq n_{A}-1$.

\end{lemma}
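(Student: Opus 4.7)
The plan is to combine the lexicographic inequality of Proposition \ref{index-par inequality} with the first-coordinate equality just proved in Kemer's Lemma 1. By Proposition \ref{index-par inequality} we already know $(\alpha,s) \leq (d(A), n_A-1)$ in the lexicographic order on $\Omega = \mathbb{Z}^{\geq 0}\times \mathbb{Z}^{\geq 0}$. Since $A$ is full, Proposition \ref{unique point} gives $\alpha = d(A)$. With equal first coordinates, the lexicographic inequality collapses to $s \leq n_A - 1$, which is the desired conclusion.

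Alternatively one can give a direct argument that makes the geometric content visible, without going through the lexicographic comparison. By Kemer's Lemma 1 the first coordinate of $Ind(A)$ is $\alpha = d(A)$. Hence, by the very definition of $s = s_\Gamma$ (with $\Gamma = \Id(A)$), for every sufficiently large $\nu$ there exists a multilinear polynomial $f \notin \Id(A)$ having $\nu$ disjoint alternating sets of cardinality $d(A)$ together with exactly $s$ disjoint alternating sets of cardinality $d(A)+1$. Suppose for contradiction that $s \geq n_A$. Then $f$ has at least $n_A$ disjoint alternating sets, each of cardinality $d(A)+1$, so Lemma \ref{polynomials with many alternating sets are identities} applies and forces $f \in \Id(A)$: on any admissible evaluation (semisimple or radical substitutions from a fixed basis), every such alternating set either receives only semisimple values and then vanishes by the pigeonhole principle together with the alternation, or else contributes at least one radical value, and having $n_A$ such sets produces at least $n_A$ radical substitutions, annihilating $f$ by nilpotence of $J(A)$. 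This contradicts $f \notin \Id(A)$, so $s \leq n_A - 1$.

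There is no real obstacle here: the lemma is a short corollary packaging together the bound $Ind(A) \leq Par(A)$ (already proved) and the equality of first coordinates for full algebras (just proved). The substance of the section begins with the next step, namely controlling also the number of radical substitutions that appear in nonidentities of Kemer-index shape, which will lead to property $K$ and eventually to the full Kemer Lemma 2 asserting $s = n_A - 1$ under the additional structural hypothesis.
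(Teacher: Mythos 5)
Your proof is correct and takes essentially the same route as the paper: fullness together with Kemer's Lemma 1 (Proposition \ref{unique point}) gives $\alpha=d(A)$, and then $s\le n_A-1$ follows from Lemma \ref{polynomials with many alternating sets are identities}, exactly as in the paper's argument. Your first variant, citing the second half of Proposition \ref{index-par inequality}, is just a repackaging of the same reasoning, since that half of the proposition is proved by the identical counting of radical substitutions.
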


\begin{proof}

By Kemer's Lemma $1$ we can find nonidentity
polynomials with arbitrary many alternating sets of cardinality
$d(A)$ and since this is the maximum possible, we have that
$\alpha=d(A)$. It follows that in alternating sets of cardinality
$d(A)+1$ we must have at least one radical evaluation and hence we
cannot have more than $n_{A}-1$ in a nonidentity.

\begin{remark}
Although not needed later in the paper, it worth noting that the result of the lemma above holds also for arbitrary
finite dimensional algebras. Indeed we know (Corollary
\ref{product of Full algebras}) that the algebra $A$ is PI equivalent to the
direct product of algebras $B_{1} \times \cdots \times B_{m}$,
where $B_{i}$ is full for $i=1,\ldots,m$. For each $B_{i}$ we
consider the dimension of the semisimple part $d(B_{i})$. Applying
Kemer lemma 1 we have that $\alpha \geq max_{i}(d(B_{i}))$. On the
other hand if $\alpha > d(B_{i})$, any multilinear polynomial with
more than $n_{B_{i}}-1$ alternating sets of cardinality $\alpha$
is in $\Id(B_{i})$ (any alternating set must have at least one
radical evaluation) and hence if $ \alpha > max_{i}(d(B_{i}))$,
any polynomial as above is an identity of $B_{1} \times \cdots
\times B_{m}$ and hence of $A$. This contradicts the definition of
the parameter $\alpha$ and hence $\alpha = max_{i}(d(B_{i}))$. Now
take an alternating set of cardinality $\alpha +1$. In every such
set we must have a radical evaluation or elements from different
full algebras. If they come from different full algebras we get
zero. If we get a radical element then we cannot pass $n_{A}-1$.
\end{remark}
\end{proof}

The next definition is key in the proof of Kemer's Lemma $2$ (see below).
\begin{definition}

Notation as above. Let $f$ be a multilinear polynomial which is not in
$\Id(A)$. We say that $A$ has property $K$ with respect to $f$ if $f$ vanishes on
any evaluation on $A$ with less than $n_{A}-1$ radical substitutions.

We say that a finite dimensional algebra $A$ has property
$K$ if it satisfies the property with respect to some nonidentity
multilinear polynomial.

\end{definition}

\begin{proposition}\label{property K}

Let $A$ be a finite dimensional basic
$F$-algebra. Then it has property $K$.

\end{proposition}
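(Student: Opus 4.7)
I propose to establish property $K$ by exhibiting an explicit multilinear polynomial $f\notin\Id(A)$ that carries exactly $n_A-1$ disjoint alternating sets of cardinality $d(A)+1$. Given such an $f$, property $K$ follows automatically from a squeezing argument: in any nonvanishing evaluation, each such alternating set must contain at least one radical substitution, for otherwise the $d(A)+1$ semisimple values it carries would be linearly dependent in $\overline A$ (which has dimension $d(A)$) and the alternation would vanish---so the total radical count is at least $n_A-1$; meanwhile, since $J^{n_A}=0$ and each monomial of a multilinear polynomial contains every variable exactly once, no nonzero monomial can tolerate $n_A$ or more radical substitutions. The two bounds pinch the count to exactly $n_A-1$, which is property $K$ with respect to $f$. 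The case $n_A=1$ (semisimple $A$) is vacuous, so I may assume $n_A\ge 2$ and in particular $J^{n_A-1}\ne 0$.

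\textbf{Construction of $f$.} Since $A$ is basic it is full, so take a fullness witness $f_0$ and feed it into the proof of Kemer's Lemma~1 to obtain, for any prescribed $\nu$, a multilinear nonidentity $f_1\in\langle f_0\rangle$ bearing $\nu$ disjoint alternating sets of cardinality $d(A)$, together with a witnessing evaluation $\phi_1$ in which each such set runs through the matrix-unit basis of $\overline A$ built from the designated matrices and frames of that construction. Next, pick radical elements $r_1,\ldots,r_{n_A-1}\in J$ with $r_1r_2\cdots r_{n_A-1}\ne 0$, which exist since $J^{n_A-1}\ne 0$. I would then introduce fresh variables $z_1,\ldots,z_{n_A-1}$, insert them into positions of a distinguished monomial of $f_1$ so that under $z_j\mapsto r_j$ they combine with the surrounding semisimple frames to deliver the factor $r_1r_2\cdots r_{n_A-1}$, and simultaneously adjoin each $z_j$ to one of the existing alternating sets---enlarging $n_A-1$ of them to cardinality $d(A)+1$. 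The resulting polynomial $f$ has the desired $n_A-1$ big alternating sets (plus the remaining $\nu-(n_A-1)$ small sets), and the natural extended evaluation $\phi$ sends the original variables as in $\phi_1$ and $z_j\mapsto r_j$.

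\textbf{Main obstacle.} The technical heart is to verify that $\phi(f)\neq 0$, i.e., that the enlarged alternations do not annihilate the designated monomial. Inside each enlarged set, $d(A)$ semisimple matrix units are alternated with a single radical $r_j$. The frame idempotents provided by the Kemer Lemma~1 construction must be shown to enforce that any nontrivial permutation of the semisimple entries either produces repeated matrix units (killed by alternation) or lands a matrix unit in a slot where its indices misalign with the flanking frames (producing zero), while any transposition exchanging $r_j$ with a semisimple matrix unit either makes the resulting monomial vanish by frame incompatibility or places $r_j$ where the surrounding structure annihilates it. If this is carried through, only the identity permutation on each enlarged set survives, and $\phi(f)$ reduces to a single nonzero product whose radical factors compose to $r_1\cdots r_{n_A-1}\neq 0$. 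Executing this combinatorial bookkeeping---in particular, choosing which small sets to enlarge and where to insert each $z_j$ to respect the flow of matrix indices dictated by fullness---is the delicate step, and it is the main place where basicness of $A$ (via fullness and the resulting matrix-unit structure from Kemer's Lemma~1) is essentially used. Once this verification is complete, property $K$ follows at once from the squeezing argument of the first paragraph.
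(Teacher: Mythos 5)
There is a genuine gap, and it sits exactly where you place your ``main obstacle.'' What you propose to construct --- a multilinear nonidentity of $A$ carrying $n_A-1$ disjoint alternating sets of cardinality $d(A)+1$ --- is precisely the content of Kemer's Lemma $2$ (Lemma \ref{Kemer's Lemma 2}), and in the paper that lemma is proved \emph{using} property $K$: when a radical variable bordered by two equal idempotents is adjoined to a small set, the vanishing of the nontrivial permutations is not a matter of frame bookkeeping at all --- such a permutation produces a legitimate evaluation of the original polynomial with \emph{fewer} radical substitutions, and the only reason it vanishes is property $K$. So your route inverts the logical order, and the verification you defer cannot be ``carried through'' with the tools you allow yourself. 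Indeed, your construction uses only fullness (via Kemer's Lemma $1$, Lemma \ref{full-folds}) and the existence of a nonzero product $r_1\cdots r_{n_A-1}\in J^{n_A-1}$, and these hypotheses are provably insufficient: take $A=UT_2(F)\times N$ with $N$ nilpotent of nilpotency index $10$. This $A$ is full (the standard polynomial $St_3$ witnesses it) and $J(A)^{9}\neq 0$, yet $Ind(A)=(2,1)<(2,9)=Par(A)$, so \emph{no} nonidentity of $A$ has $n_A-1=9$ big alternating sets; any completion of your construction would prove a false statement for this algebra. Hence basicness must enter in a way strictly stronger than ``fullness plus the matrix-unit structure,'' and your sketch never identifies how. (Also, at the technical level, ``inserting'' fresh variables $z_j$ into a distinguished monomial of $f_1$ is not an operation inside the $T$-ideal, and a nonzero product $r_1\cdots r_{n_A-1}$ need not be compatible with the idempotent frames of the fixed fullness evaluation --- the four-case analysis in the paper's proof of Lemma \ref{Kemer's Lemma 2} is exactly about this compatibility.)

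The paper avoids all of this by arguing indirectly. Assume property $K$ fails: then every multilinear nonidentity of $A$ has a nonzero evaluation with fewer than $n_A-1$ radical substitutions. Form $\widehat{A}_{n_A-1}=A'/(I_1+I_2^{\,n_A-1})$ as in Proposition \ref{Control on nilpotency index}; lifting such an evaluation to $A'=\overline{A}*F\{x_1,\ldots,x_n\}$ (replacing radical values by indeterminates) produces an element of degree $k<n_A-1$ in the $x$'s, which therefore cannot lie in $I_1+I_2^{\,n_A-1}$ unless it lies in $I_1$, and it does not. Hence $\Id(\widehat{A}_{n_A-1})=\Id(A)$, while $\widehat{A}_{n_A-1}$ has the same semisimple part and nilpotency index at most $n_A-1$, so $Par(\widehat{A}_{n_A-1})<Par(A)$, contradicting basicness. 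If you want to salvage your plan, you would have to supply an independent proof of the existence of the polynomial with $n_A-1$ big sets that genuinely exploits basicness (not just fullness); as the example above shows, no amount of combinatorial care with frames can substitute for that input.
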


Before proving the proposition we introduce a construction which
will enable us to put some ``control'' on the nilpotency index of
(the radical of) finite dimensional algebras which are PI
equivalent.

Let $B$ be any finite dimensional algebra and let
$B^{'}=\overline{B}*F\{x_{1},\dots,x_{n}\}$ be the algebra of
polynomials in the variables $\{x_{1},\dots,x_{n}\}$ with coefficients in
$\overline{B}$, the semisimple component of $B$ (in case
$\overline{B}=0$, $B^{'}=F\langle x_{1},\dots,x_{n}\rangle$ is the
nonunital free $F$-algebra generated by the variables
$\{x_{1},\dots,x_{n}\}$). The number of variables we take is at
least the dimension of $J(B)$. Let $I_{1}$ be the ideal of $B^{'}$
generated by all evaluations of polynomials of $\Id(B)$ on $B^{'}$
and let $I_{2}$ be the ideal generated by all variables
$\{x_{1},\ldots,x_{n}\}$. Consider the algebra $\widehat{B}_{u}=B^{'}/(I_{1} +
I_{2}^{u})$.

\begin{proposition}\label{Control on nilpotency index}
The following hold.
\begin{enumerate}

\item

$\Id(\widehat{B}_{u})= \Id(B)$ whenever $u \geq n_{B}$ $($$n_{B}$
denotes the nilpotency index of $J(B)$$)$. In particular
$\widehat{B}_{u}$ and $B$ have the same index.

\item

$\widehat{B}_{u}$ is finite dimensional.

\item

The nilpotency index of $J(\widehat{B}_{u})$ is $\leq u$.

\end{enumerate}

\end{proposition}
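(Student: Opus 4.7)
The plan is to handle the three parts using a few common observations about $B' = \overline{B}*F\{x_1,\ldots,x_n\}$ and the ideals $I_1$ (generated by values of identities of $B$) and $I_2$ (generated by the $x_i$). Items (2) and (3) are bookkeeping once one sets up the right framework, while item (1) is where the hypotheses $n \geq \dim_F J(B)$ and $u \geq n_B$ are actually used.

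For (2), the quotient $B'/I_2^u$ is spanned by monomials with fewer than $u$ occurrences of $x$-variables, i.e.\ by elements of the form $\overline{b}_0 x_{i_1} \overline{b}_1 \cdots x_{i_k} \overline{b}_k$ with $k \le u-1$, $i_j \in \{1,\ldots,n\}$, and $\overline{b}_j \in \overline{B}$. For each fixed $k < u$ there are finitely many index-tuples $(i_1,\ldots,i_k)$ and each such spot contributes a space of dimension at most $(\dim_F \overline{B})^{k+1}$; hence $B'/I_2^u$ is finite dimensional, and so is its further quotient $\widehat{B}_u$.

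For (3), let $\overline{I_2}$ be the image of $I_2$ in $\widehat{B}_u$. By construction $\overline{I_2}^{\,u}=0$, so $\overline{I_2}$ is a nilpotent (two-sided) ideal and is contained in $J(\widehat{B}_u)$. Conversely, I would identify $\widehat{B}_u/\overline{I_2}$ with $B'/(I_1+I_2)$. Since $B'/I_2 \cong \overline{B}$, the point is that $I_1$ has zero image in $\overline{B}$: each generator has the form $f(a_1,\ldots,a_k)$ with $f\in \Id(B)$, and its image in $\overline{B}$ is a value of $f$ on $\overline{B}\subseteq B$, which is zero. Thus $\widehat{B}_u/\overline{I_2}\cong \overline{B}$ is semisimple, so $\overline{I_2}=J(\widehat{B}_u)$ and its nilpotency index is at most $u$.

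For (1) the inclusion $\Id(B)\subseteq \Id(\widehat{B}_u)$ is immediate: given $f\in \Id(B)$ and lifts $\tilde a_i\in B'$ of elements $a_i\in \widehat{B}_u$, the element $f(\tilde a_1,\ldots,\tilde a_k)$ lies in $I_1$ by the very definition of $I_1$, hence maps to zero in $\widehat{B}_u$. The reverse inclusion is the main obstacle and requires producing a surjection $\widehat{B}_u \twoheadrightarrow B$. Here I would use $n\ge \dim_F J(B)$: choose a basis $e_1,\ldots,e_m$ of $J(B)$ (with $m\le n$) and define an $F$-algebra homomorphism $\phi\colon B'\to B$ as the inclusion on $\overline{B}$, with $\phi(x_i)=e_i$ for $i\le m$ and $\phi(x_i)=0$ otherwise. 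Because $B=\overline{B}+J(B)$ and its image contains $\overline{B}\cup\{e_1,\ldots,e_m\}$, the map $\phi$ is surjective. Moreover $\phi(I_1)=0$ since each generator is a value of an identity of $B$; and $\phi(I_2^u)\subseteq J(B)^u=0$ whenever $u\ge n_B$. Therefore $\phi$ factors through a surjection $\widehat{B}_u\twoheadrightarrow B$, so every identity of $\widehat{B}_u$ is an identity of $B$. Combining the two inclusions gives $\Id(\widehat{B}_u)=\Id(B)$, and in particular $\widehat{B}_u$ and $B$ have the same Kemer index.
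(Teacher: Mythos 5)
Your proposal is correct and matches the paper's own proof in all essentials: the same surjection $\phi\colon B'\to B$ (mapping $\overline{B}$ identically and the variables into $J(B)$, hence killing $I_1$ and, since $u\ge n_B$, also $I_2^u$) gives part (1), the same monomial-spanning count gives (2), and the same identification $\widehat{B}_u/\overline{I_2}\cong B'/(I_1+I_2)\cong\overline{B}$ gives (3). The only cosmetic point is that in (2) the spanning monomials should also allow some of the $\overline{b}_j$ to be absent (consecutive, leading or trailing $x$-variables), as the paper notes, which does not affect the finite-dimensionality conclusion.
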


\begin{proof}
Note that by the definition of $\widehat{B}_{u}$ (modding $B^{'}$
by an ideal that contains $I_{1}$), $\Id(\widehat{B}_{u}) \supseteq
\Id(B)$. On the other hand there is a surjection $\widehat{B}_{u}
\longrightarrow B$ which maps the variables $\{x_{1},\ldots,x_{n}\}$ onto a
spanning set of $J(B)$ and $\overline{B}$ is mapped
isomorphically. Indeed, we have such a map from $B^{'}$, namely
there is a surjective map $\phi: B^{'}\longrightarrow B$ which
maps $\overline{B}$ isomorphically and the variables $\{x_{1},\ldots,x_{n}\}$
onto a spanning set of $J(B)$. The ideal $I_{1}$ consists of all
evaluation of $\Id(B)$ on $B^{'}$ and hence is contained in
$ker(\phi)$. Also the ideal $I_{2}^{u}$ is contained in
$ker(\phi)$ since $u \geq n_{B}$ and $\phi(x) \in J$. This shows
(1).

To see
(2) observe that any element in $\widehat{B}_{u}$ is represented
by a sum of elements of the form $b_{1}z_1b_2z_2 \cdots
b_{j}z_jb_{j+1}$ where $j < u$, $b_{i}\in \overline{B}$ and $z_i
\in \{x_{i}\}$ (we allow also consecutive $z_{i}$'s). Clearly, the
subspace spanned by monomials for a given configuration of the
$z_i$'s (and arbitrary $b_i$'s) has finite dimension. On the other
hand the number of different configurations is finite and so the
result follows. In order to prove the 3rd statement, note that
$I_{2}$ generates a radical ideal in $\widehat{B}_{u}$  and since
$B^{'}/I_{2}\cong \overline{B}$ we have that
$\widehat{B}_{u}/I_{2} \cong
B^{'}/(I_{1}+I_{2}^{u}+I_{2})=B^{'}/(I_{1}+I_{2})\cong
(B^{'}/(I_{2}))/I_{1}= \overline{B}/I_{1}=\overline{B}$ (the last
equality follows from the fact that $\overline{B} \subseteq B$).
We therefore see that $I_{2}$ generates the radical in
$\widehat{B}_{u}$, and hence its nilpotency index is bounded by
$u$ as claimed.
\end{proof}

\begin{proof}(of Proposition \ref{property K})
Suppose $A$ is a basic algebra for which $K$ fails.
 This means that any multilinear
polynomial that vanishes on any evaluation with less than
$n_{A}-1$ radical evaluations must be in $\Id(A)$. Consider the
algebra $\widehat{A}_{u}=A^{'}/(I_{1}+I_{2}^{u})$ (from the
proposition above). We claim that
$\Id(\widehat{A}_{n_{A}-1})=\Id(A)$. This will show that $Par(\widehat{A}_{n_{A}-1}) < Par(A)$ (and hence $A$ is not basic) since the algebras $\widehat{A}_{n_{A}-1}$ and
$A$ have isomorphic semisimple parts and the nilpotency index of
$J(\widehat{A}_{n_{A}-1})$ is bounded by $n_{A}-1$.

To prove the claim, note that by construction $\Id(A) \subseteq
\Id(\widehat{A}_{n_{A}-1})$. For the converse take a multilinear
polynomial $f$ which is not in $\Id(A)$. Then by assumption, there
is a nonzero evaluation $\tilde{f}$ of $f$ on $A$ with
\textit{less} than $n_{A}-1$ radical substitutions (say $k$).
Following this evaluation we refer to the variables of $f$ that
get semisimple(radical) values as semisimple (radical) variables
respectively. Consider the evaluation $\hat{f}$ of $f$ on
$A^{'}=\overline{A}*\{x_{1},\dots,x_{n}\}$ where semisimple
variables are evaluated as in $\tilde{f}$ whereas the radical
variables are evaluated on $\{x_{1},\dots,x_{n}\}$ respecting the
surjection $\phi: A^{'}\rightarrow A$ (by abuse of language ``we
replace the radical values in $\tilde{f}$ by indeterminates'').

We claim $\hat{f} \notin I_{1}+I_{2}^{n_{A}-1}$. This will show $f
\notin \Id(\widehat{A}_{n_{A}-1})$ which is what we want. Clearly,
the map $\phi$ induces $\bar{\phi}: A^{'}/I_{1} \rightarrow A$ and
hence $\hat{f} \notin I_{1}$. Next, note (by multiplication with
central indeterminates) that an element in
$A^{'}=\overline{A}*\{x_{1},\dots,x_{n}\}$ is in $I_{1}$ if and
only if each one of its multihomogeneous components in the
variables of $\{x_{1},\dots,x_{n}\}$ is in $I_{1}$. But by
construction $\hat{f}$ is multihomogeneous of degree $k < n_{A}-1$
in the variables $\{x_{1},\dots,x_{n}\}$ whereas any element of
$I_{2}^{n_{A}-1} \subseteq A^{'}$ is the sum of multihomogeneous
elements degree $\geq n_{A}-1$. We therefore have that $\hat{f}
\in I_{1}+I_{2}^{n_{A}-1}$ if and only if $\hat{f} \in I_{1}$ and
we are done.

\end{proof}

Let $A$ be a basic algebra. Let $\Par_{A}=(d(A), n_{A}-1)$ where $d(A)$ is the
dimension of the semisimple part of $A$ and $n_{A}$ the nilpotency
index of $J(A)$. By Proposition \ref{property K} the algebra
satisfies property $K$ with respect to a nonidentity polynomial
$f$, that is $f$ vanishes on any evaluation whenever it has less
than $n_{A}-1$ radical substitutions. Furthermore, there is
\textit{possibly a different} nonidentity polynomial $h$ with respect to
which $A$ is full, that is $h$ has a nonzero evaluation which
``visits'' each one of the simple components of
$\overline{A}$. In order to proceed we need both properties to be
satisfied by the same polynomial.

\begin{lemma}\label{Full and property K}
Let $A$ be a basic algebra. Then there exists a multilinear
polynomial $f$, nonidentity of $A$, which visits every simple
component in any nonzero evaluation and has property $K$

\end{lemma}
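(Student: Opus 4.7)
The plan is to combine a polynomial $h$ witnessing fullness of $A$ with a polynomial $g$ witnessing property $K$: I would first strengthen $h$ via Kemer's Lemma 1 and then multiply the two on disjoint variables. The main obstacle I anticipate is showing that the resulting product is not an identity of $A$. (The case $\overline{A}=0$ is trivial, since then ``visits every simple component'' is vacuous and any nonidentity with property $K$ suffices, so I assume $\overline{A}\neq 0$.)

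First, I would apply Kemer's Lemma 1 to $h$ to obtain, for any chosen $\nu$, a multilinear nonidentity $h_\nu\in\langle h\rangle$ with $\nu$ disjoint alternating sets of cardinality $d(A)=\dim_F\overline{A}$. Taking $\nu\ge n_A$ and using that the number of radical substitutions in any nonvanishing evaluation is bounded by $n_A-1$, the pigeonhole argument from the proof of the corollary following Kemer's Lemma 1 shows that at least one alternating set of $h_\nu$ must consist entirely of semisimple substitutions; by the alternation these values form a basis of $\overline{A}$, so every nonzero evaluation of $h_\nu$ visits every simple component of $A$. Thus the \textit{existential} fullness enjoyed by $h$ is upgraded to a \textit{universal} one for $h_\nu$.

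Next I would form $f(X,z,Y):=h_\nu(X)\cdot z\cdot g(Y)$ on pairwise disjoint variable sets $X$, $\{z\}$, $Y$, with $z$ an auxiliary connector variable. Granting for the moment that $f\notin\Id(A)$, both desired conclusions about $f$ follow immediately: any nonvanishing evaluation of $f$ forces $h_\nu(X)$ and $g(Y)$ to evaluate to nonzero elements of $A$; the former sub-evaluation then visits every simple component (by the previous step), while the latter has at least $n_A-1$ radical substitutions by property $K$ of $g$. Consequently $f$ visits every simple component in every nonzero evaluation and has property $K$, as required.

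The hard part, then, is verifying $f\notin\Id(A)$. For this I would use the explicit form of $h_\nu$ coming from the proof of Kemer's Lemma 1, in which $h_\nu$ is obtained from $h$ by inserting polynomials $Z_i$ that evaluate on $A$ to primitive idempotents $E^i_{j_i,j_i}$ of each simple component $\overline{A}_i$. This construction offers considerable freedom in the nonzero values $v$ that $h_\nu$ attains: by tuning the indices $j_i$ and the ``inner'' evaluation of $h$ in the Kemer's Lemma 1 construction, one can force $v$ to have a prescribed nontrivial semisimple projection. Since $g$ has some nonzero value $w$ on $A$, I would then pick a simple component $\overline{A}_i$ on which the relevant projections of $v$ and $w$ are both nonzero and choose the connector $z$ inside $\overline{A}_i$ so that $vzw\neq 0$, using semisimplicity of $\overline{A}_i$. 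The subtlety is that arbitrary nonzero values $v,w$ may well satisfy $vAw=0$; the extra flexibility in the construction of $h_\nu$ (as opposed to $h$ itself) is exactly what is needed to avoid this.
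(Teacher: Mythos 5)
Your reduction of the whole problem to the nonvanishing of $f=h_{\nu}\cdot z\cdot g$ is where the argument breaks, and the breakdown cannot be repaired by tuning the Kemer's Lemma $1$ construction: for any basic algebra $A$ with at least two simple components the product is an identity of $A$, no matter how $h_{\nu}$, $g$ and the connector are chosen. Indeed, every nonzero value $w$ of a property-$K$ polynomial $g$ lies in $J^{n_A-1}$ (each monomial of $g$ contains at least $n_A-1$ variables with radical values). On the other hand, any polynomial with even one alternating set of cardinality $d(A)=\dim_F\overline{A}$ vanishes identically on $\overline{A}\cong A_1\times\cdots\times A_q$ when $q\geq 2$: evaluation on $\overline{A}$ is computed componentwise, and the projections to a simple component $A_r$ of the $d(A)$ alternating values are $d(A)>\dim_F A_r$ elements, hence linearly dependent, so every component vanishes. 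Since $A\rightarrow A/J$ is an algebra map, every value $v$ of $h_{\nu}$ therefore lies in $J$; the ``extra flexibility'' you invoke can never produce a value with nonzero semisimple projection, which is exactly what you would need. Consequently $v\,\hat{z}\,w\in J\cdot A\cdot J^{n_A-1}\subseteq J^{n_A}=0$, i.e. $h_{\nu}zg\in\Id(A)$. Concretely, for $A=UT_2(F)$ (basic, $q=2$, $n_A=2$) all values of $h_{\nu}$ and of $g$ lie in $Fe_{12}$ and $e_{12}Ae_{12}=0$. Note also that a basic algebra with $q\geq2$ necessarily has $J\neq0$, so this is the typical case, not a corner case.

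The paper avoids any external product construction and argues by contradiction, using basicness directly: if no single multilinear nonidentity had both properties, then every multilinear nonidentity of $A$ would have either a nonzero evaluation missing some simple component $A_j$ (hence be a nonidentity of the subalgebra $B_j=\langle A_i,J : i\neq j\rangle$) or a nonzero evaluation with fewer than $n_A-1$ radical substitutions (hence be a nonidentity of $\widehat{A}_{n_A-1}$). This would make $A$ PI-equivalent to $B_1\times\cdots\times B_q\times\widehat{A}_{n_A-1}$, all of whose factors have parameter strictly smaller than $\Par(A)$, contradicting that $A$ is basic. Observe that the polynomial produced this way is not required to carry any alternating sets of cardinality $d(A)$; those are introduced only afterwards, in the proof of Kemer's Lemma $2$, by inserting the alternating sets at the idempotents bordering the radical variables rather than by concatenating two witnesses, and it is precisely this interlacing that escapes the nilpotency obstruction which kills your product.
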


\begin{proof}

We prove the lemma by showing that there exists a multilinear
polynomial, nonidentity of $A$, which visits every simple
component in any nonzero evaluation and in addition realizes the
property $K$ of $A$, that is, vanishes on any evaluation with less
than $n_{A}-1$ radical evaluations.

Suppose first $A$ is a radical algebra. Then, any nonidentity of
$A$ which satisfies property $K$, visits (in an empty way) all
simple components of $A$ in every nonzero evaluation and so we are
done in that case. Suppose now $q>0$ (number of simple components
of $A$) and suppose by way of contradiction that the lemma is
false, that is any multilinear polynomial $f$, nonidentity of $A$,
has a nonzero evaluation with less than $n_{A}-1$ radical
evaluations or has a nonzero evaluation which does not visit all
simple components of $A$.

Consider the subalgebras $B_{i}$, $i=1,\ldots,q$ constructed in
the proof of Lemma \ref{decomposition into product of full}, the algebra $B=B_1\times \cdots \times B_q$
and the algebra $\widehat{A}_{n_{A}-1}$ (see proof of Prop. \ref{property K}). Let
$C=B_1\times \cdots B_q \times \widehat{A}_{n_{A}-1}$. We claim
the algebras $A$ and $C$ are PI equivalent. This will contradict
the fact that $A$ is basic since $Par(\widehat{A}_{n_{A}-1})<
Par(A)$ and $Par(B_i) < Par(A)$ for $i=1,\ldots,q$. To prove the
claim note that by the definition of the $B_i$'s and
$\widehat{A}_{n_{A}-1}$ we have $\Id(A) \subseteq
(\cap\Id(B_i))\cap \Id(\widehat{A}_{n_{A}-1})= \Id(C)$. For the
opposite direction let $f \notin \Id(A)$, multilinear. By
assumption, $f$ has either a nonvanishing evaluation on $A$ which
does not visit all simple components of $A$ (say $A_j$) or has a
nonvanishing evaluation with less than $n_{A}-1$ radical
evaluations. In the first case $f$ is a nonidentity of $B_{j}$
whereas in the other case $f$ is a nonidentity of
$\widehat{A}_{n_{A}-1}$. In both cases $f \notin
\Id(C)$ and the lemma is proved.
\end{proof}

\begin{example}
Let $A$ be the algebra over $F$ of upper triangular $2 \times 2$-matrices. Consider the polynomial $p(x,y,z)=xyz$. It is clear that $p$ has a nonzero evaluation which visits the two simple components of $A$ ($x=e_{11}, y=e_{12}, z=e_{22}$). On the other one can easily find a nonzero evaluation of $p$ which visits only one simple component ($x=e_{11}, y=e_{11}, z=e_{11}$). Next we construct a polynomial which visits both simple components on every nonzero evaluation. Let $q(x_1,x_2,x_3)=\sum (-1)^{\sigma}x_{\sigma(1)}x_{\sigma(2)}x_{\sigma(3)}$. The polynomial $q$ is alternating on a set of cardinality $3$, the variables $x_1, x_2, x_3$ must get linearly independent elements of $A$ in any nonzero evaluation. Since $A$ is of dimension $3$, the elements $e_{11},e_{12},e_{22}$ must appear as values and hence any nonzero evaluation visits all simple components. It remains to show that there is at least one nonzero evaluation. Indeed, it is easily verified that the evaluation $x_1=e_{11}, x_2=e_{12}, x_3=e_{22}$ is nonzero.

\end{example}

We can now state and prove Kemer's Lemma $2$.

\begin{lemma} [Kemer's Lemma $2$] \label{Kemer's Lemma 2}

Let $A$ be a finite dimensional. Suppose $A$ is basic whose index
is $(d=d(A), n_{A}-1)$. Then for any integer $\nu$ there exists a
multilinear, nonidentity polynomial $f$ with $\nu$-alternating
sets of cardinality $d$ $($small sets$)$ and precisely $n_{A}-1$
alternating sets of variables of cardinality $d+1$ $($big sets$)$.

\end{lemma}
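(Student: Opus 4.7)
The plan is to combine Lemma~\ref{Full and property K} with the construction from Lemma~\ref{full-folds} (Kemer's Lemma~$1$). By Lemma~\ref{Full and property K}, fix a multilinear polynomial $f(x_1,\ldots,x_q,\vec{y})\notin\Id(A)$ which simultaneously visits every simple component on every nonzero evaluation and has property $K$. Combined with the fact that any nonzero evaluation has at most $n_A-1$ radical substitutions (by nilpotency of $J(A)$), property $K$ forces every nonzero evaluation of $f$ to have \emph{exactly} $n_A-1$ radical substitutions. Fix a nonzero evaluation $\phi$ and, relabeling if needed, let $y_1,\ldots,y_{n_A-1}$ be the variables of $f$ that $\phi$ sends to radical values $r_1,\ldots,r_{n_A-1}\in J(A)$.

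Next I would run the construction of Lemma~\ref{full-folds} on $f$, keeping the variables $y_1,\ldots,y_{n_A-1}$ untouched. That construction replaces the semisimple variables $x_1,\ldots,x_q$ by polynomials $Z_1,\ldots,Z_q$ made up of ``designated'' matrix-unit variables bordered by idempotent ``frame'' variables, and alternates across the designated sets to produce $\nu+n_A-1$ disjoint small alternating sets of cardinality $d(A)$. Call the resulting polynomial $g\in\langle f\rangle$. The Kemer Lemma~$1$ construction provides a distinguished extension $\widehat{\phi}$ of $\phi$ on which $g$ does not vanish, because any nontrivial permutation within a small set produces a product of matrix units of the form $E_{a,b}E_{c,d}$ with $b\neq c$, which equals zero.

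The last step is to promote $n_A-1$ of the small sets to big sets. For each $j=1,\ldots,n_A-1$, let $X^{(j)}=\{u^{(j)}_1,\ldots,u^{(j)}_{d(A)}\}$ be the $j$-th small set of $g$, adjoin the radical variable $y_j$, and alternate over $\widetilde{X}^{(j)}=X^{(j)}\cup\{y_j\}$ (of cardinality $d(A)+1$). Leave the remaining $\nu$ small sets alone; call the outcome $f^{\ast}\in\langle f\rangle$. By construction $f^{\ast}$ has $\nu$ small alternating sets and exactly $n_A-1$ big ones, which is the desired shape.

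The main obstacle is to show $f^{\ast}\notin\Id(A)$ by checking that $f^{\ast}(\widehat{\phi})\neq 0$. I would argue that in the alternating sum over each $\widetilde{X}^{(j)}$ only the identity permutation survives. A nontrivial permutation $\sigma$ fixing $y_j$ kills its contribution via the same matrix-unit collision argument as in Lemma~\ref{full-folds}. A permutation $\sigma$ that sends $y_j$ into the slot of some $u^{(j)}_i$ causes $r_j$ to be sandwiched by idempotent frames $E_{a,a}\cdots E_{b,b}$ calibrated for a specific matrix-unit evaluation (which is typically zero on $J(A)$ by the frame structure) or, more importantly, the slot originally holding $r_j$ now receives a semisimple matrix unit, so the number of radical substitutions seen by $f$ drops to $n_A-2$, and property $K$ then forces the monomial to vanish on $A$. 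Hence $f^{\ast}(\widehat{\phi})$ reduces to a nonzero multiple of $g(\widehat{\phi})$, and $f^{\ast}$ has the required properties. Combined with Proposition~\ref{index-par inequality} this yields $\Ind(A)=\Par(A)$.
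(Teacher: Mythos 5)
Your construction breaks down at the crucial last step, the one that promotes small sets to big sets. In your setup the alternating polynomials $Z_1,\ldots,Z_q$ are attached (as in Kemer's Lemma $1$) to the \emph{semisimple} variables $x_1,\ldots,x_q$. Now take a permutation that swaps the radical variable $y_j$ with a designated variable $u^{(j)}_i$ sitting inside some $Z_m$. Neither of your two reasons for vanishing holds. First, a radical value bordered by idempotent frames of a single simple component need not vanish: $E_{a,a}\,r\,E_{b,b}$ can be nonzero for $r\in J(A)$, so the ``frame structure'' does not kill it. Second, and more seriously, the number of radical substitutions seen by $f$ does \emph{not} drop to $n_A-2$: after the swap, $Z_m$ acquires a radical factor, so the effective value of $x_m$ (namely $\widehat{Z}_m$ times its old semisimple value) becomes radical, while $y_j$ now carries a semisimple matrix unit. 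The radical value has merely migrated from the slot of $y_j$ to the slot of $x_m$; the total count is still $n_A-1$, and property $K$ says nothing about such evaluations. So you cannot conclude that only the identity permutation survives, and $f^{\ast}(\widehat{\phi})\neq 0$ is unproved.

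This is precisely the difficulty the paper's proof is organized around (see the remark that ``it will be important where these alternating sets are located with respect to the radical evaluations''). There the $Z$'s are not attached to semisimple variables but to carefully chosen \emph{radical} variables $w_{r_t}$, selected after a bordering analysis ($\widehat{w}_i \mapsto 1_{A_{j(i)}}\widehat{w}_i 1_{A_{\widetilde{j}(i)}}$ and the two claims that every simple component occurs among the mixed pairs $j(i)\neq\widetilde{j}(i)$). With that placement, a swap involving a mixed-bordered radical dies by a component mismatch with the frames, and a swap involving a same-component-bordered radical lets the migrated radical value be absorbed into the value of the already-radical variable $w_{r_1}$ (a product of two radical elements is a single radical value), so the radical count genuinely drops to $n_A-2$ and property $K$ applies. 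Your proposal also silently assumes the bordering idempotents exist inside $A$, i.e.\ it ignores the cases where $A$ has no identity element, and the case $q=1$ without identity (the paper's case $(2,2)$) which requires a separate argument with $e_0=1-1_{A_1}$. To repair your proof you would have to redo the bordering-pair analysis and relocate the $Z$'s onto radical variables, which is essentially the paper's argument.
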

The theorem is clear either in case $A$ is radical or semisimple (i.e. simple). Hence for the proof
we assume that $q \geq 1$ (the number of simple components of $A$) and $n_{A} >1$.
\begin{note}

Any nonzero evaluation of such $f$ must consists only of
semisimple evaluations in the $\nu$-folds and each one of the big
sets (namely the sets of cardinality $d+1$)  must have exactly
one radical evaluation.

\end{note}

\begin{proof}(of Lemma \ref{Kemer's Lemma 2})

By Lemma \ref{Full and property K}, there exists a multilinear polynomial $f$ with respect to which $A$ is full and has property
$K$. Let us fix a nonzero evaluation $x \longmapsto \widehat{x}$
realizing the ``full'' property. Note that by the construction of
$f$, being the evaluation nonzero, precisely $n_{A}-1$ variables
must obtain radical values, and hence the rest of the variables obtain
semisimple values. Let us denote by $w_{1},\ldots, w_{n_{A}-1}$ the
variables that obtain radical values (in the evaluation above) and
by $\widehat{w}_{1},\ldots, \widehat{w}_{n_{A}-1}$ their
corresponding values. By abuse of language we refer to the
variables $w_{1},\ldots, w_{n_{A}-1}$ as \textit{radical}
variables.

\begin{remark}
Note that by Kemer lemma $1$ we could assume at this point that
$f$ is alternating on $\nu$-folds of alternating sets of
cardinality $\dim_{F}(\overline{A})$, but since it will be
important where these alternating sets are located (with respect
to the radical evaluations), our starting polynomial in the proof
below is merely assumed to realize property $K$ and the nonzero
evaluation (fixed above) to realize the full property of $A$.

\end{remark}

We will consider four cases.
These correspond to whether $A$ has or does not have an identity
element and whether $q$ (the number of simple components) $>1$ or
$q=1$.

{\bf Case $(1,1)$} ($A$ has an identity element and $q > 1$).

By linearity we may assume the evaluation of any radical variable
$w_{i}$ is of the form $1_{A_{j(i)}}\widehat{w}_{i}1_{A_{\widetilde{j}(i)}}
$, $i=1,\ldots,n_{A}-1$, where $1_{A_{k}}$ is the identity element of the simple component
$A_{k}$. Note
that the evaluation remains full (i.e. visits every simple component
of $A$).

Choose a monomial $X$ of $f$ which does not vanish upon the above
evaluation.

Notice that the variables of $X$ which get
semisimple evaluations from \textit{different} simple components must be
separated by radical variables.

\begin{claim}

The elements $1_{A_{j(i)}}, 1_{A_{\widetilde{j}(i)}}$, $i=1,\ldots,n_{A}-1$, which appear in the borderings
above, represent all simple components of $\overline{A}$.

\end{claim}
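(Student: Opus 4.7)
The plan is to exploit two facts simultaneously: (i) the fixed evaluation $x \mapsto \widehat{x}$ realizes the \emph{full} property, so every simple component $A_k$ is visited by some semisimple substitution; and (ii) in any monomial contributing nonzero to $f$, semisimple values from distinct simple components annihilate each other because $1_{A_k}\cdot 1_{A_{k'}}=0$ for $k\neq k'$. Combining these, every visited component $A_k$ must be ``sandwiched'' against some radical variable, and that radical variable's bordering idempotent is forced to be $1_{A_k}$. Since $A$ is full, all components are visited, hence all appear among the borderings.

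More precisely, first I would fix the nonzero monomial $X$ of $f$ chosen above and partition its variables (after the given evaluation) into maximal \emph{blocks} of consecutive semisimple variables, separated by radical variables. By observation (ii), each such block is entirely contained in a single simple component $A_k$, for otherwise the product $\widehat{X}$ would vanish. Since $n_A-1\ge 1$, the monomial contains at least one radical variable, so no block equals the whole of $X$; consequently every block has a radical variable immediately on its left, immediately on its right, or both (the only blocks lacking a neighbor on one side are the leftmost and rightmost, which still have a radical neighbor on the other side).

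Next, suppose a block lies in the simple component $A_k$ and is adjacent to the radical variable $w_i$. The evaluation of $w_i$ is $1_{A_{j(i)}}\widehat{w}_i 1_{A_{\widetilde j(i)}}$; for the product $\widehat{X}$ to be nonzero, the idempotent of $w_i$ on the side facing the block must equal $1_{A_k}$, i.e.\ $\widetilde j(i)=k$ if the block is to the right of $w_i$, and $j(i)=k$ if it is to the left. Hence each visited component $A_k$ coincides with some $1_{A_{j(i)}}$ or $1_{A_{\widetilde j(i)}}$.

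Finally, the fullness of the evaluation guarantees that every simple component of $\overline{A}$ is actually visited, so each one contributes at least one block and is therefore represented in the collection of borderings $\{1_{A_{j(i)}},1_{A_{\widetilde j(i)}}\}_{i=1}^{n_A-1}$, proving the claim. The only mildly subtle point to be careful about is the boundary case of a block at the extreme left or right end of $X$: here I use that the existence of at least one radical variable (since $n_A>1$) together with maximality of the block forces this block to still have a radical neighbor on its inner side, so the argument goes through uniformly.
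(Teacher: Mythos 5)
Your proof is correct and follows essentially the same route as the paper: the paper also starts from a variable $u$ in a supposedly missing component (supplied by fullness), moves along the nonvanishing monomial $X$ until the bordering idempotent of the first adjacent radical variable is reached, and uses the annihilation of distinct simple components to force that idempotent to lie in the same component as $u$. Your block decomposition and your handling of the boundary case via $n_A>1$ are just a slightly more systematic phrasing of the same argument.
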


Indeed, suppose that the component $A_{1}$ (say) is not
represented among the $1_{A_{k}}$'s. Since our original
evaluation is full, there is a variable which is evaluated by an
element $u$ of $A_{1}$. ``Moving'' along the monomial $X$ to the
left or right of $u$ we will hit a bordering value $1_{A_{k}}$
before we hit any radical evaluation. But this
is possible only if both $u$ and $1_{A_{k}}$ belong to the
same simple component. This proves the claim.

But we need more: Consider the radical evaluations which are
bordered by pairs of elements $(1_{A_{j(i)}}, 1_{A_{\widetilde{j}(i)}})$ where $j(i)\neq \widetilde{j}(i)$ (i.e. belong to different simple components).

\begin{claim}

Every simple component is represented by one of the elements in
these pairs.

\end{claim}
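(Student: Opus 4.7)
The plan is to argue by contradiction, refining the idea used for the first claim by tracking, at each intermediate position of the monomial $X$, which simple component of $\overline{A}$ the prefix--product of $X$ currently lies in. Suppose some simple component $A_{m}$ does not appear in any cross--bordering pair, i.e.\ $m \notin \{j(i), \widetilde{j}(i)\}$ for every $i$ with $j(i) \neq \widetilde{j}(i)$. The goal is to contradict the fullness of the chosen evaluation, using $q > 1$.

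First I would set up a ``component type'' assigned to each intermediate position of $X$ and read off how it propagates from factor to factor. Since $X$ evaluates to a nonzero element of $A$ and the simple components of $\overline{A}$ are pairwise orthogonal, two consecutive semisimple values must come from the same simple component; a semisimple value in $A_{r}$ adjacent to $w_{i}$ on the left (resp.\ right) forces $r = j(i)$ (resp.\ $r = \widetilde{j}(i)$); and two consecutive radical variables $w_{i}, w_{k}$ force $\widetilde{j}(i) = j(k)$. The upshot is that the component type is preserved across every semisimple variable and across every same--component radical variable, and changes from $A_{j(i)}$ to $A_{\widetilde{j}(i)}$ precisely when one crosses a cross--bordering radical variable.

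Granted this, the conclusion is forced. Under the contrary assumption on $A_{m}$, the component type along $X$ can neither enter $A_{m}$ from outside nor leave it, so along $X$ it is either identically $A_{m}$ or never equal to $A_{m}$. On the one hand, fullness of the chosen evaluation provides a variable of $f$ that is evaluated semisimply in $A_{m}$, and by multilinearity this variable occurs in $X$; its position pins the type to $A_{m}$, so the type is identically $A_{m}$. On the other hand, fullness together with $q > 1$ also provides a variable of $f$ evaluated in some simple component $A_{m'} \neq A_{m}$, which likewise occurs in $X$ and pins the type at that position to $A_{m'}$, a contradiction. Hence $A_{m}$ must belong to one of the cross--bordering pairs.

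The only point requiring care is the bookkeeping behind the ``type'' function: one has to verify the propagation rule at each of the four possible adjacency patterns (semisimple--semisimple, semisimple--radical, radical--semisimple, radical--radical) and argue that these rules are genuinely forced by the non--vanishing of the evaluation of $X$. Once that is in place the rest is essentially a graph--theoretic remark: deleting the vertex $A_{m}$ from the graph whose edges are the cross--bordering pairs would isolate it, so no single (path--like) monomial can visit both $A_{m}$ and any other simple component.
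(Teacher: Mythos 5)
Your proof is correct and is essentially the paper's argument: both rest on the observation that, since the idempotents of distinct simple components are orthogonal, the component "type" along the nonvanishing monomial $X$ can change only when one crosses a radical value bordered by idempotents from two \emph{different} components, while fullness together with $q>1$ forces such a change involving $A_m$. The only cosmetic difference is that you start the walk from a semisimple value in $A_m$ supplied directly by fullness (so you never invoke the preceding claim), whereas the paper starts from a radical value bordered by a pair $(1_{A_m},1_{A_m})$ whose existence is provided by that claim.
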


Again, assume that $A_1$ is not represented among these pairs. By
the preceding claim $A_1$ is represented in some pair and so it must be
represented by both partners in each pair it appears. Take such a
pair $(1_{A_{j(i)}}, 1_{A_{\widetilde{j}(i)}})$, where $j(i)=\widetilde{j}(i)=1$. Moving along the
monomial $X$ to the left of $1_{A_{j(i)}}$ or to the right $1_{A_{\widetilde{j}(i)}}$
we will hit a value in a different
simple component. But before that we must hit a radical evaluation
which is bordered by a pair where one of the partners is from
$A_1$ and the other from a different simple component. This
contradicts our assumption and hence the claim is proved.

For $t=1,\ldots,q$ we fix a variable $w_{r_t}$ whose radical
value is $1_{A_{j(r_t)}}\widehat{w}_{r_t}1_{A_{\widetilde{j}(r_t)}}$

where

\begin{enumerate}

\item

$j(r_t)\neq \widetilde{j}(r_t)$ (i.e. different simple components).

\item

One of the element $1_{A_{j(r_t)}},1_{A_{\widetilde{j}(r_t)}}$
is the identity element of the $t$-th simple component. We
refer to that element as the \textit {idempotent
attached} to the simple component $A_{t}$.

\end{enumerate}

\begin{remark}

Note that we may have $w_{r_t}=w_{r_{t^{'}}}$ even if $t\neq t^{'}$.

\end{remark}

Next by the $T$ operation we replace the variables $w_{r_t}$,
$t=1,\ldots,q$, by $y_{r_t}w_{r_t}$ or $w_{r_t}\widetilde{y}_{r_t}$
according to the location of the primitive idempotent attached to
the $t$-th simple component. Clearly, by evaluating the variable
$y_{r_t}$ by $1_{A_{j(r_t)}}$ (or the variable
$\widetilde{y}_{r_t}$ by $1_{A_{\widetilde{j}(r_t)}}$) the
value of the polynomial remains the same and in particular
nonzero.

Applying Lemma \ref{full-folds} we can replace the variable
$y_{r_t}$, $t=1,\dots,q$, by a $\nu$-fold alternating polynomial (on the sets
$U^{t}_{l}$) $Z_{r_t}=Z_{r_t}(U^{t}_1,\ldots, U^{t}_{\nu}; Y_t)$,
(or replace $\widetilde{y}_{r_t}$ by a $\nu$-fold alternating
polynomial $\widetilde{Z}_{r_t}$). Here, the sets $U^{t}_{l}$,
$l=1,\ldots, \nu$ are each of cardinality $\dim_{F}(A_{t})$. Now,
if we further alternate the sets $U^{1}_{l},\ldots,U^{q}_{l}$ for
$l=1,\ldots, \nu$, we obtain a nonidentity polynomial with
$\nu$-folds of (small) sets of alternating variables where each
set is of cardinality $\dim_{F}(\overline{A})$. In the sequel we fix
an evaluation of the polynomials $Z_{r_t}$ (or
$\widetilde{Z}_{r_t}$) so the entire polynomial obtains a nonzero
value. As in Kemer's Lemma $1$, also here the alternating
variables $U^{t}_{l}$ are bordered by variables (called
\textit{frames}) whose values are primitive idempotents in
$A_{t}$.

Our next task is to construct such polynomial with an extra $n_{A}-1$ alternating sets of cardinality $d+1$ (big sets).
Consider the radical variables $w_{r_t}$, $t=1,\dots,q$ with radical
evaluations $1_{A_{j(r_t)}}\widehat{w}_{r_t}1_{A_{\widetilde{j}(r_t)}}$, $j(r_t) \neq \widetilde{j}(r_t)$ (i.e. different simple components).

We attach each variable $w_{r_t}$ to one alternating set
$U^{1}_{l},\ldots,U^{q}_{l}$ (some $l$). We see that any
nontrivial permutation of $w_{r_t}$ with one of the variables of
$U^{1}_{l},\ldots,U^{q}_{l}$, keeping the evaluation above, will
yield a \textit{zero value} since the primitive idempotents values
in frames variables of each variable of
$U^{1}_{l},\ldots,U^{q}_{l}$ belong to the same simple components
whereas the pair of idempotents in
$1_{A_{j(r_t)}}\widehat{w}_{r_t}1_{A_{\widetilde{j}(r_t)}}$ belong
to different simple components. Thus we may alternate the variable
$w_{r_t}$ with $U^{1}_{l_t},\ldots,U^{q}_{l_t}$, $t=1,\ldots, q$
and obtain a multilinear nonidentity of $A$. Next we proceed in a
similar way with any variable $w_{i}$ whose evaluation is
$1_{A_{j(i)}}\widehat{w}_{i}1_{A_{\widetilde{j}(i)}}$ and
$j(i)\neq\widetilde{j}(i)$.

Finally we need to attach the radical variables $w_{i}$ whose
evaluation is
$1_{A_{j(i)}}\widehat{w}_{i}1_{A_{\widetilde{j}(i)}}$ where
$j(i)=\widetilde{j}(i)$ (i.e. the same simple component) to some
small set. We claim also here that if we attach the variable
$w_{i}$ to the sets $U^{1}_{l},\ldots,U^{q}_{l}$ (some $l$), any
nontrivial permutation yields a zero value, and hence the value of
the entire polynomial remains unchanged. If we permute $w_{i}$
with an element $u_{0} \in U^{k}_{l}$ which is bordered by
idempotents different to $1_{A_{j(i)}}$ we obtain zero. On the
other hand we claim that any permutation of $w_{i}$ with an
element $u_{0} \in U^{k}_{l}$ which is bordered by the idempotent
$1_{A_{j(i)}}$ corresponds to an evaluation of the original
polynomial with fewer radical values and then we will be done by
the property $K$. In order to simplify our notation let
$\{U^{1}_{l},\ldots,U^{q}_{l}\}=\{U^{1},\ldots,U^{q}\}$ (omit the
index $l$) and suppose without loss of generality, that $u_{0}\in
U^{1}$. Permuting the variables $w_{i}$ and $u_{0}$ (with their
corresponding evaluations) we see that the polynomial
$Z_{r_1}=Z_{r_1}(U^{1}=U^{1}_1,\ldots, U^{t}_{\nu}; Y_t)$ (or
$\widetilde{Z}_{r_1}$) with $w_{i}$ replacing $u_{0}$, obtains a
radical value which we denote by $\widehat{\widehat{w}}$.
Returning to our original polynomial, we obtain the same value if
we evaluate the variable $w_{i}$ by a suitable semisimple element,
the variable $w_{r_1}$ by $\widehat{\widehat{w}}\widehat{w}_{r_1}$
(or $\widehat{w}_{r_1}\widehat{\widehat{w}}$) and the evaluation
of any semisimple variable remains semisimple. It follows that if
we make such a permutation for a unique radical variable $w_{i}$,
the value amounts to an evaluation of the original polynomial with
$n_{A}-2$ radical evaluations and hence vanishes. Clearly,
composing $p>0$ permutations  of that kind yields a value which
may be obtained by the original polynomial $f$ with $n_{A}-1-p$
radical evaluations and hence vanishes by property $K$. This
completes the proof of the lemma where $A$ has identity and $q$,
the number of simple components, is $> 1$.

\smallskip

{\bf Case $(1,2)$} ($A$ has an identity element and $q=1$). We start with a
nonidentity $f$ which satisfies property $K$. Clearly we may
multiply $f$ by a variable $x$ and get a nonidentity (since
$x$ may be evaluated by $1$). Again by Lemma \ref{full-folds}
we may replace $x$ by a polynomial $h$ with $\nu$-folds of
alternating sets of cardinality $d$. Consider
the polynomial $hf$. We attach the radical variables of $f$ to
some of the small sets in $h$. Any nontrivial permutation
vanishes because $f$ satisfies property $K$. This completes the
proof of the Lemma \ref{Kemer's Lemma 2} in case $A$ has an
identity element.


\smallskip

{\bf Case $(2,1)$}. Suppose now $A$ has no identity element and
$q>1$. The proof in this case is basically the same as in the case
where $A$ has an identity element. Let
$e_{0}=1-1_{A_1}-1_{A_2}-\cdots -1_{A_q}$ ($1\in F$) and attach $e_{0}$ to
the set of elements which border the radical values
$\widehat{w}_{j}$. Of course $e_{0}$ is not an element of $A$ but the product $e_{0}a$, $a \in A$ is well defined. A similar argument shows that also
here every simple component ($A_{1},\ldots,A_{q}$) is
represented in one of the bordering pairs where the partners are
\textit{different} (the point is that one of the partners (among
these pairs) may be $e_{0}$). Now we complete the proof exactly as
in case $(1,1)$.

\smallskip

{\bf Case $(2,2)$}. In order to complete the proof of the lemma we
consider the case where $A$ has no identity element and $q=1$. The
argument in this case is different. For simplicity we
denote by $e_1=1_{A_1}$ and $e_0=1-e_1$. Let
$f(x_{1},\ldots,x_{n})$ be a nonidentity of $A$
which satisfies property $K$ and let
$f(\widehat{x}_{1},\ldots,\widehat{x}_{n})$ be a
nonzero evaluation for which $A$ is full. If
$e_{1}f(\widehat{x}_{1},\ldots,\widehat{x}_{n})
\neq 0$ (or
$f(\widehat{x}_{1},\ldots,\widehat{x}_{n})e_{1}$)
we proceed as in case $(1,2)$. To treat the remaining case we may
assume further that

$$
e_{0}f(\widehat{x}_{1},\ldots,\widehat{x}_{n})e_{0}\neq 0.
$$

First note, by linearity, that each one of the radical values
$\widehat{w}$ may be bordered by one of the pairs
$\{(e_{0},e_{0}), (e_{0},e_{1}), (e_{1},e_{0}), (e_{1},e_{1})\}$
so that if we replace the evaluation $\widehat{w}$ (of $w$) by the
corresponding element $e_{i}\widehat{w}e_{j}$, $i,j=0,1$, we get
nonzero.

Now, if one of the radical values (say $\widehat{w_{0}}$) in
$f(\widehat{x}_{1},\ldots,\widehat{x}_{n}$) allows a bordering by
the pair $(e_{0},e_{1})$ (and remains nonzero), then replacing
$w_{0}$ by $w_{0}y$ yields a nonidentity (since we may evaluate $y$ by $e_{1}$). Invoking Lemma
\ref{full-folds} we may replace the variable $y$ by a polynomial
$h$ with $\nu$-folds of alternating (small) sets of cardinality
$\dim_{F}(\overline{A})=\dim_{F}(A_{1})$. Then we attach the
radical variable $w_{0}$ to a suitable small set. Clearly, the
value of any nontrivial permutation of $w_{0}$ with any element of the small set is zero since the
borderings are different. Similarly, attaching radical variables
$w$ whose radical value is $e_{i}\widehat{w}e_{j}$ where $i\neq
j$, to small sets yields zero for any nontrivial permutation and
hence the value of the polynomial remains nonzero. The remaining
possible values of radical variables are either
$e_{0}\widehat{w}e_{0}$ or $e_{1}\widehat{w}e_{1}$. Note that
since semisimple values can be bordered only by the pair $(e_{1},
e_{1})$, any alternation of the radical variables whose radical
value is $e_{0}\widehat{w}e_{0}$ with elements of a small set
vanishes and again the value of the polynomial remains unchanged.
Finally (in order to complete this case, namely where one of the radical variables, say $w_{0}$, is bordered by the pair $(e_{0},e_{1})$) we attach the remaining radical
variables (whose values are bordered by $(e_{1},e_{1})$) to suitable small sets in $h$. Here,
the value of any nontrivial permutation of $w_{0}$ with elements of the small set is zero because of property $K$ (as in case $(1,1)$). This settles
the case where the bordering pair of $\widehat{w}_{0}$ is $(e_{0},e_{1})$ . Obviously, the same holds if the bordering pair of $\widehat{w}_{0}$ above
is $(e_{1},e_{0})$. The outcome is that we may assume that all
radical values may be bordered by either $(e_{0},e_{0})$ or
$(e_{1},e_{1})$.

\begin{claim}
Under the above assumption, all pairs that border radical values are equal, that is are all $(e_{0},e_{0})$ or all $(e_{1},e_{1})$.
\end{claim}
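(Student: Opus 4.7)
The plan is to exploit the linearity of $f$ in each radical value, together with the multilinearity of $f$ in all its variables, to decompose $f(\widehat{x}_{1},\ldots,\widehat{x}_{n})$ into a sum of simpler pieces, and then to show that only pieces in which every radical is bordered by the same pair can contribute nonzero values. Concretely, I would use $\widehat{w}_{i}=e_{0}\widehat{w}_{i}e_{0}+e_{1}\widehat{w}_{i}e_{1}$ (the cross-terms $e_{0}\widehat{w}_{i}e_{1}$ and $e_{1}\widehat{w}_{i}e_{0}$ being zero by the reduction preceding the claim) and expand multilinearly in the radical variables to obtain $f(\widehat{x}_{1},\ldots,\widehat{x}_{n})=\sum_{b}f_{b}$, where $b$ runs over all functions $\{w_{1},\ldots,w_{n_{A}-1}\}\to\{0,1\}$ and $f_{b}$ is the evaluation in which each $\widehat{w}_{i}$ has been replaced by $e_{b(i)}\widehat{w}_{i}e_{b(i)}$. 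The claim reduces to showing $f_{b}=0$ whenever $b$ is non-constant.

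Next I would examine an arbitrary monomial $X$ of $f$ after evaluation in $f_{b}$. Each semisimple value lies in $A_{1}=e_{1}Ae_{1}$, while each substituted radical value lies in $e_{b(i)}Ae_{b(i)}$. Since $e_{0}e_{1}=e_{1}e_{0}=0$, such a product vanishes unless the right-idempotent of each factor matches the left-idempotent of the next. As every variable occurring in $X$ has equal left- and right-idempotents (both $e_{1}$ for a semisimple, both $e_{b(i)}$ for a radical), this matching condition forces a single common idempotent throughout $X$. Multilinearity of $f$ now plays the decisive role: each monomial of $f$ contains every variable exactly once, so any two radicals $w_{i}$, $w_{j}$ with $b(w_{i})\neq b(w_{j})$ produce incompatible idempotent requirements in every monomial, giving $f_{b}=0$. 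Likewise, the presence of any semisimple variable forces $b\equiv 1$. Only the constant $b$'s survive, which is precisely the assertion that all borderings of radical values agree, either all $(e_{0},e_{0})$ or all $(e_{1},e_{1})$.

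The main conceptual step is arriving at the decomposition $f=\sum_{b}f_{b}$; the vanishing of all non-constant pieces is then a short bookkeeping exercise based on the annihilation relations $e_{0}e_{1}=e_{1}e_{0}=0$ and the multilinearity of $f$. The obstacle I would expect to guard against is simply losing sight of multilinearity, since it is what propagates a single idempotent mismatch through the entire polynomial and forces the global uniformity of the bordering.
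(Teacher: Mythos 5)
Your argument is correct and is essentially the paper's own proof: the paper likewise uses that in a nonvanishing monomial consecutive values must be glued by matching idempotents (via $e_{0}e_{1}=e_{1}e_{0}=0$), that semisimple values are bordered only by $(e_{1},e_{1})$, and that therefore two radical values carrying different diagonal borderings would force a mixed-bordered radical value, contrary to the standing assumption; your decomposition $f(\widehat{x}_{1},\ldots,\widehat{x}_{n})=\sum_{b}f_{b}$ together with the vanishing of the nonconstant terms is just a more explicit bookkeeping of the same idea. One small correction: the cross-terms $e_{0}\widehat{w}_{i}e_{1}$ and $e_{1}\widehat{w}_{i}e_{0}$ need not be zero as \emph{elements} of the algebra; what the reduction preceding the claim actually provides is that every summand of the multilinear expansion in which some radical value is replaced by a mixed-bordered element vanishes (that case having been dealt with already), and this weaker statement is all your decomposition into the $f_{b}$'s requires. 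Similarly, $e_{1}Ae_{1}$ may properly contain $A_{1}$, but all you use is that a semisimple value $a$ satisfies $a=e_{1}ae_{1}$, which does hold.
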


Indeed, if we have of both kinds, we must have a radical value
which is bordered by a mixed pair since the semisimple variables
can be bordered only by the pair $(e_{1},e_{1})$ (and in particular they cannot be bordered by mixed pairs). This of course contradicts our assumption.

A similar argument shows that we cannot have radical variables $w$
with values $e_{0}\widehat{w}e_{0}$ since again, semisimple values
can be bordered only by $(e_{1}, e_{1})$ and this will force the
existence of a radical value bordered by mixed idempotents (we remind the reader that here $q=1$ and $A$ is full with respect the evaluation $f(\widehat{x}_{1},\ldots,\widehat{x}_{n})$).

The remaining case is the case where all values (radical and
semisimple) are bordered by the pair $(e_{1}, e_{1})$ and this
contradicts the assumption
$e_{0}f(\widehat{x}_{1},\ldots,\widehat{x}_{n})e_{0}\neq 0$. This
completes the proof of the Lemma.

\end{proof}

We can now prove Proposition \ref{equivalence conditions-basic}.

\begin{proof}
If $A$ is basic we know that $A$ is full and satisfies property
$K$. Kemer's Lemma $2$ shows that if $A$ is full and satisfies
property $K$ there exist nonidentities of $A$ with arbitrary many
alternating sets of cardinality $d(A)$ and precisely $n_{A}-1$
alternating sets of cardinality $d(A)+1$. This shows that
$Ind(A)=Par(A)$. In order to complete the proof of the proposition
we show that if a finite dimensional algebra $A$ admits a
multilinear polynomial $f$ with $\mu$ alternating sets of
cardinality $d(A)$ and precisely $n_{A}-1$ alternating sets of
cardinality $d(A)+1$ then $A$ is basic. Suppose not. Then $A$ is
PI equivalent to an algebra $B=B_1 \times \cdots \times B_m$ where
$Par(B_i) < Par(A)$, $i=1,\ldots,m$. But this implies that $f \in
\Id(B_{i})$, $i=1,\ldots,m$ and hence $f \in \Id(B)$.
Contradiction.

\end{proof}

We close this section  by establishing the Phoenix property of
Kemer polynomials. As mentioned above, this seemingly
``unimportant'' property is in fact key for the entire proof of
the representability theorem.

\begin{lemma}\label{Phoenix}

Let $A$ be a finite dimensional algebra over $F$ and suppose $A$ is basic. The following hold.

\begin{enumerate}

\item

Let $f \notin \Id(A)$ be a multilinear polynomial and suppose $A$
is full with respect to \underline{\textit{any}} nonzero evaluation of $f$ on
$A$, that is, in any nonzero evaluation of $f$ on $A$ we must have
semisimple values from all simple components. Then if $f' \in
\langle f \rangle$ is multilinear $($$\langle f \rangle =$ $T$-
ideal generated by $f$$)$ is a nonidentity of $A$ then it is full with respect to
any nonzero evaluation on $A$.

\item

Let $f \notin \Id(A)$ be multilinear and suppose $f$ vanishes on
any evaluation on $A$ with less than $n_{A}-1$ radical
evaluations. Then if $f' \in \langle f\rangle$ is multilinear $($and
nonidentity of $A$$)$ then it vanishes on any evaluation on $A$ with
less than $n_{A}-1$ radical evaluations.

\end{enumerate}

\end{lemma}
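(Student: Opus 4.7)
The plan is to exploit the $T$-ideal expression
\[
f' \;=\; \sum_\alpha c_\alpha \, L_\alpha \cdot f\bigl(M_{\alpha,1}, \ldots, M_{\alpha,n}\bigr) \cdot R_\alpha,
\]
where after multilinearization we may arrange that each $L_\alpha$, $R_\alpha$, and $M_{\alpha,l}$ is a monomial in the variables $y_1,\ldots,y_N$ of $f'$, with every variable appearing exactly once per summand. For any evaluation $y_m \mapsto \widehat{y}_m$ in $A$ (each $\widehat{y}_m$ either semisimple in a specific simple component $A_c$ or in $J(A)$), nonvanishing of $\widehat{f'}$ forces at least one summand to be nonzero, and in particular $f(M_{\alpha,1}(\widehat{y}),\ldots,M_{\alpha,n}(\widehat{y})) \neq 0$ for some $\alpha$.

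The central observation is the following trichotomy, which holds because the Wedderburn--Malcev lifts $A_1,\ldots,A_q \subseteq A$ are pairwise orthogonal: for each monomial $M_{\alpha,l}$, the value $M_{\alpha,l}(\widehat{y})$ either (i) vanishes, (ii) is semisimple in some fixed $A_c$ --- precisely when every variable of $M_{\alpha,l}$ receives a semisimple value in that same $A_c$ --- or (iii) lies in $J(A)$, in which case $M_{\alpha,l}$ must contain at least one radical variable. After further expanding $f(M_\alpha(\widehat{y}))$ by multilinearity of $f$, we may regard the nonzero contribution as a genuine evaluation of $f$ whose substitutions are of the standard types considered throughout the paper.

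For claim (1), suppose the nonzero evaluation $\widehat{f'}$ misses the simple component $A_j$; then every $\widehat{y}_m$ lies in the subalgebra $B_j := \bigoplus_{c \neq j} A_c \oplus J(A)$ of $A$. Since $B_j$ is closed under multiplication, each $M_{\alpha,l}(\widehat{y}) \in B_j$. Decomposing these into their $A_c$- and $J$-parts for $c \neq j$ and expanding by multilinearity of $f$, the relation $f(M_\alpha(\widehat{y})) \neq 0$ yields a nonzero evaluation of $f$ in which no variable is assigned a value from $A_j$, contradicting the hypothesis that $f$ visits every simple component on every nonzero evaluation.

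For claim (2), assume $\widehat{f'} \neq 0$ has fewer than $n_A - 1$ radical substitutions. By the trichotomy, every radical-valued $M_{\alpha,l}(\widehat{y})$ requires at least one radical variable inside $M_{\alpha,l}$; since each variable of $f'$ appears in exactly one position per summand, the number of radical substitutions into $f$ is bounded by the number of radical $\widehat{y}$'s falling inside the $M_{\alpha,l}$'s, hence by the total radical count of $\widehat{f'}$, which is strictly less than $n_A - 1$. The hypothesis on $f$ then forces $f(M_\alpha(\widehat{y})) = 0$ in each potentially nonzero summand, contradicting $\widehat{f'} \neq 0$. The principal delicate point is the orthogonality underlying the trichotomy: without $1_{A_i} 1_{A_j} = 0$ for $i \neq j$, products of semisimple elements from distinct components could generate nonzero elements of $J(A)$ without consuming a radical variable, and the counting in part (2) would collapse.
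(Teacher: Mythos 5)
Your proof is correct and follows essentially the same route as the paper: the paper reduces a multilinear $f'\in\langle f\rangle$ to the two elementary forms $\sum_i g_i\cdot f\cdot h_i$ and $f(z_1\cdots z_t,x_2,\ldots,x_n)$, which together amount exactly to your normal form $\sum_\alpha c_\alpha L_\alpha f(M_{\alpha,1},\ldots,M_{\alpha,n})R_\alpha$, and then passes to the induced evaluation of $f$ just as you do. Your explicit trichotomy (a monomial value is zero, semisimple in a single simple component, or radical, by orthogonality of the Wedderburn--Malcev components) is the same observation the paper leaves implicit when it asserts that the induced evaluation of $f$ misses the same simple component, respectively has no more radical substitutions.
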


\begin{proof}

Suppose $f(x_{1},\ldots,x_{n})$ is a multilinear polynomial which
satisfies the condition in $(1)$. It is sufficient to show the condition
remains valid if $f'$ is multilinear and has the form (a)
$f'=\sum_{i}g_{i}\cdot f\cdot h_{i}$ (b)
$f'(z_{1},\ldots,z_{t},x_{2},\ldots,x_{n}) =
f(Z,x_{2},\ldots,x_{n})$ where $Z=z_{1}\cdots z_{t}$ is a
multilinear monomial consisting of variables disjoint to the
variables of $f(x_{1},\ldots,x_{n})$. If $f'=\sum_{i}g_{i}\cdot
f\cdot h_{i}$ then any nonzero evaluation of $f'$ arises from a
nonzero evaluation of $f$ and so the claim is clear in this case.
Let
$f'(z_{1},\ldots,z_{t},x_{2},\ldots,x_{n})=f(Z,x_{2},\ldots,x_{n})$
and suppose $x_{i}=\hat{x}_{i}$ and $z_{i}=\hat{z}_{i}$ is a nonvanishing
evaluation of $f'$. If a simple component $A_{1}$ say, is not
represented, then the same simple component is not represented in
the evaluation $x_{1}=\hat{z}_{1}\cdots \hat{z}_{t},
x_{2}=\hat{x}_{2},\ldots, x_{n}=\hat{x}_{n}$ and hence $f$
vanishes. We see that $f'$ vanishes on any evaluation which misses
a simple component.

We now turn to the proof of the $2$nd part of the lemma. If
$f'=\sum_{i}g_{i}\cdot f\cdot h_{i}$ then it is clear that if an
evaluation of $f'$ has less than $n_{A}-1$ radical evaluations
then with that evaluation $f$ has less than $n_{A}-1$ radical
evaluations and hence vanishes. This implies the vanishing of
$f'$. If an evaluation of
$f'(z_{1},\ldots,z_{t},x_{2},\ldots,x_{n})=f(Z,x_{2},\ldots,x_{n})$
has less than $n_{A}-1$ radicals, then this corresponds to an
evaluation of $f(x_{1},\ldots,x_{n})$ with less than $n_{A}-1$
radicals and hence vanishes.

\end{proof}

\begin{corollary}\label{Kemer polynomials of A are Phoenix}

Let $A$ be a basic algebra, then Kemer polynomials satisfy the Phoenix property.

\end{corollary}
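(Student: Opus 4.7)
The plan is to reduce the Phoenix property of Kemer polynomials to the two properties governed by Lemma \ref{Phoenix} and then re-run the construction of Kemer's Lemma $2$. Fix a Kemer polynomial $f$ of $\Id(A)$ and a multilinear $f' \in \langle f \rangle$ with $f' \notin \Id(A)$; I want to produce $f'' \in \langle f' \rangle$ with $f'' \notin \Id(A)$ that is itself a Kemer polynomial of $A$.

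First I would verify that a Kemer polynomial $f$ satisfies both hypotheses of Lemma \ref{Phoenix}. For property $K$, observe that $f$ has exactly $n_A - 1$ alternating big sets of cardinality $d(A)+1$; in any such set a purely semisimple evaluation would produce $d(A)+1$ linearly independent elements of $\overline{A}$, which is impossible. Hence each big set absorbs at least one radical substitution, so any nonzero evaluation of $f$ uses exactly $n_A - 1$ radicals, and $f$ vanishes on evaluations with fewer radicals. For fullness on every nonzero evaluation, recall that a Kemer polynomial may be taken with arbitrarily many small sets; choosing $\mu \geq n_A$ alternating small sets of cardinality $d(A)$, the bound $n_A - 1$ on the total number of radical substitutions forces, by pigeonhole, at least one small set to be evaluated entirely on semisimple elements. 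Alternation makes those $d(A)$ values linearly independent in $\overline{A}$, hence a basis, so every simple component is represented.

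Now apply Lemma \ref{Phoenix} to $f$ and $f'$: since $f' \in \langle f \rangle$ is a multilinear nonidentity of $A$, the algebra $A$ is full with respect to every nonzero evaluation of $f'$, and $f'$ also satisfies property $K$. I would then feed $f'$ into the construction of Kemer's Lemma $2$ (Lemma \ref{Kemer's Lemma 2}): since $A$ is basic, the proof takes as input any multilinear nonidentity that is simultaneously full and has property $K$, and produces, via a sequence of $T$-operations on it (left/right multiplication by fresh variables, substitution of variables by polynomials supplied by Lemma \ref{full-folds}, and alternation of variables already present), a nonidentity with $\nu$ alternating small sets of cardinality $d(A)$ and precisely $n_A - 1$ alternating big sets of cardinality $d(A)+1$, for any prescribed $\nu$. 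Taking $\nu \geq \mu$, the resulting polynomial $f''$ is a Kemer polynomial of $A$; since every step of the construction is a $T$-operation on $f'$, we have $f'' \in \langle f' \rangle$.

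The main obstacle is the assertion in the previous paragraph that the entire construction of Lemma \ref{Kemer's Lemma 2} stays inside the $T$-ideal generated by its input. Once one unwinds the four cases in that proof, each move is a $T$-operation (multiplying by a new variable, substituting a variable by a polynomial produced by Lemma \ref{full-folds}, or antisymmetrizing over variables already present), so this is really a matter of verification rather than a new idea; the only care needed is to interpret Lemma \ref{full-folds} as a substitution into the running polynomial rather than as a standalone existence statement. Granting this, the corollary is immediate.
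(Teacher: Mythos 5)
Your argument is correct and follows essentially the same route as the paper's own proof: note that $A$ is full and has property $K$ with respect to the Kemer polynomial $f$, transfer both properties to $f'$ via Lemma \ref{Phoenix}, and then run the construction of Kemer's Lemma $2$ (Lemma \ref{Kemer's Lemma 2}) on $f'$ to obtain a Kemer polynomial $f''\in\langle f'\rangle$. The details you make explicit---property $K$ from the $n_A-1$ big sets, fullness of every nonzero evaluation from the small sets, and the observation that every step of the Lemma \ref{Kemer's Lemma 2} construction is a $T$-operation on its input---are precisely the points the paper leaves implicit.
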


\begin{proof}

Let $f$ be a Kemer polynomial of $A$ and let $f' \in \langle f \rangle$ be a multilinear nonidentity of $A$. We need to show there exists a Kemer polynomial in $\langle f' \rangle$.

We know that $A$ is full and satisfies property $K$ with respect
to the Kemer polynomial $f$. Furthermore, applying the previous
lemma, we have that $A$ is full and has property $K$ with respect
to $f'$. Finally, invoking Kemer's Lemma $2$ there exists $f'' \in
\langle f' \rangle$ which is Kemer.

\end{proof}

\end{section}

\begin{section}{Finite generation of the relatively free algebra}{\label{Finite-generation-of}}

Recall that an algebra $W$ satisfies the $m$th Capelli identity if any multilinear polynomial having an alternating set of cardinality
(at least) $m$ is an identity of $W$. The purpose of this section
is to prove that for any such algebra one can assume that the corresponding
relatively free algebra $\mathcal{W}$ is generated by (only) $m-1$
variables. More precisely, we will show that if
\[
\mathcal{W}=F\left\langle x_{1},\ldots,x_{m-1}\right\rangle /\Id(W)\cap F\left\langle x_{1},\ldots,x_{m-1}\right\rangle
\]
then $\Id(W)=\Id(\mathcal{W})$. To this end we recall some basic
results (and fix notation) from the representation theory of
$S_{n}$ (the symmetric group on $n$ elements) and their
application to PI theory.

Since $FS_{n}$ is a semisimple algebra, we can write $FS_{n}$ as
a direct sum of its minimal two sided ideals. It is a basic fact that
the number of such is equal to the number of conjugacy classes of
$S_{n}$. Each conjugacy
class can be described by a partition $\mu$ of $n$, i.e. a finite
sequence of nondecreasing natural numbers which sum up to $n$. As we shall see, each partition encodes the
structure of some minimal ideal.

Suppose $\mu=(\mu_{1},\ldots,\mu_{m})$ is a partition of $n$. A Young
diagram of $\mu$ is a finite subset of
$\mathbb{Z}\times\mathbb{Z}$ defined as $D_{\mu}=\{(i,j)|\,
i=1...m,\, j=1,\ldots,\mu_{i}\}$. This is not more than $m$ rows of
boxes, such that the length of the $i$'th row is $\mu_{i}$. A
Young tableau $T_{\mu}$ associated to $\mu$ is a filling of the
boxes of $D_{\mu}$ with the integers $1,\ldots,n$ (repetitions are
not allowed). We say that a tableau $T_{\mu}$ is \emph{standard}
if the numbers in each row and column of $T_{\mu}$ are increasing
from left to right and from up to bottom. To each tableau we
associate two subgroups of $S_{n}$ as follows: Let
$R_{T_{\mu}(1)},\ldots,R_{T_{\mu}(m)}$ denote the rows of $T_{\mu}$
(i.e. the numbers appearing in each row) and
$C_{T_{\mu}(1)},\ldots,C_{T_{\mu}(t)}$ denote the columns of
$T_{\mu}$. We denote by $\mathcal{R}_{T_{\mu}(i)}$ and by
$\mathcal{C}_{T_{\mu}(j)}$ the symmetric groups
$S_{R_{T_{\mu}(i)}}$ and $S_{C_{T_{\mu}(j)}}$ respectively (i.e
the symmetric groups acting on the numbers in the $i$th row and
$j$th column of $T_{\mu}$ respectively). Finally, we denote by
$\mathcal{R}_{T_{\mu}}$ and by $\mathcal{C}_{T_{\mu}}$ the
subgroups of $S_{n}$ which are the row and column stabilizers of
$T_{\mu}$. Clearly, with notation above, we have
$\mathcal{R}_{T_{\mu}}=\mathcal{R}_{T_{\mu}(1)}\times\cdots\times
\mathcal{R}_{T_{\mu}(m)}$ and
$\mathcal{C}_{T_{\mu}}=\mathcal{C}_{T_{\mu}(1)}\times\cdots\times
\mathcal{C}_{T_{\mu}(t)}$.

For each tableau $T_{\mu}$, consider the left ideal $V_{\mu}=FS_{n}e_{T_{\mu}}$, where
\[
e_{T_{\mu}}=\sum_{\sigma\in \mathcal{R}_{T_{\mu}},\tau\in \mathcal{C}_{T_{\mu}}}(-1)^{\tau}\sigma\tau.
\]

Now let $I$ be a minimal $2$-sided ideal of $FS_{n}$. It is well known that $I$ is equal to the sum of all minimal left
ideals isomorphic to some (fixed) minimal left ideal $V$.

\begin{theorem}

The following hold:
\begin{enumerate}

\item Let $V$ be a minimal left ideal of $FS_{n}$. Then there exists a partition $\mu$ such that $V\cong V_{\mu}$.
\item If we denote the corresponding minimal two sided ideal by $I_{\mu}$, then it is the direct sum of minimal ideals $V_{\mu}$ which correspond to standard
tableaux $T_{\mu}$ of $\mu$.
\item The map $T_{\mu} \mapsto V_{\mu}$ establishes a one to one correspondence between Young tableaux associated to Young diagram $D_{\mu}$ and minimal left
ideal isomorphic to $V_{\mu}$.
\end{enumerate}

\end{theorem}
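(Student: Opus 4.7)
The plan is to establish the classical Specht module theory in three stages: irreducibility of each $V_\mu$, non-isomorphism across distinct $\mu$, and then the counting of standard tableaux. Throughout I use the shorthand $a_{T_\mu} = \sum_{\sigma \in \mathcal{R}_{T_\mu}} \sigma$ and $b_{T_\mu} = \sum_{\tau \in \mathcal{C}_{T_\mu}}(-1)^\tau \tau$, so that $e_{T_\mu} = a_{T_\mu} b_{T_\mu}$.

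First I would prove the central combinatorial lemma: for any $\pi \in S_n$, either there exist $i \neq j$ lying in the same row of $T_\mu$ that $\pi$ sends into the same column of $T_\mu$ (in which case the transposition $(ij)$ lies in $\mathcal{R}_{T_\mu}$ and its $\pi$-conjugate lies in $\mathcal{C}_{T_\mu}$, forcing $a_{T_\mu}\pi b_{T_\mu} = 0$), or $\pi$ factors uniquely as $\pi = \rho\tau$ with $\rho \in \mathcal{R}_{T_\mu}$, $\tau \in \mathcal{C}_{T_\mu}$, in which case $a_{T_\mu}\pi b_{T_\mu} = \pm e_{T_\mu}$. Consequently $e_{T_\mu} x\, e_{T_\mu} \in F\cdot e_{T_\mu}$ for every $x \in FS_n$, and in particular $e_{T_\mu}^2 = n_\mu e_{T_\mu}$ with $n_\mu \neq 0$ (one checks $n_\mu = n!/\dim V_\mu$ is nonzero in characteristic zero). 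Thus $f_{T_\mu} = n_\mu^{-1} e_{T_\mu}$ is an idempotent whose corner algebra is $F f_{T_\mu}$; by Schur this makes $V_\mu = FS_n e_{T_\mu}$ a simple left module.

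Second, I would establish that $V_\mu \not\cong V_\lambda$ for $\mu \neq \lambda$. Ordering partitions lexicographically (or by dominance), suppose $\mu > \lambda$. Fix any tableaux $T_\mu, T_\lambda$. A pigeonhole argument on the first row where $\mu$ exceeds $\lambda$ shows that for every $\pi \in S_n$ two entries from some row of $T_\mu$ must land in a common column of $\pi T_\lambda$, so $a_{T_\mu}\pi b_{T_\lambda} = 0$ by the same transposition trick. Summing, $e_{T_\mu} FS_n e_{T_\lambda} = 0$, hence $\mathrm{Hom}_{FS_n}(V_\lambda,V_\mu) = 0$. Since the number of partitions of $n$ equals the number of conjugacy classes, and hence the number of isomorphism classes of irreducibles of $FS_n$ over the algebraically closed (char $0$) field $F$, the modules $\{V_\mu\}$ exhaust the irreducibles, proving (1). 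The associated two-sided ideal $I_\mu$ is then the Wedderburn block isomorphic to $M_{d_\mu}(F)$ where $d_\mu = \dim V_\mu$.

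Third, I would identify the direct-sum decomposition of $I_\mu$ by standard tableaux. Here the key step is to put a total order on standard tableaux of shape $\mu$ (for instance, declare $T' > T$ if, reading entries by position, the first disagreement places a larger entry higher in $T'$) and to prove that $e_{T'} \cdot e_T = 0$ whenever $T' > T$, using the same combinatorial dichotomy applied to a pair of entries the ordering forces into a row of $T'$ and a column of $T$. This yields that the left ideals $FS_n e_T$, as $T$ ranges over standard tableaux of shape $\mu$, have direct sum inside $I_\mu$. Each is isomorphic to $V_\mu$ and each has dimension $d_\mu$, so the number $f^\mu$ of standard tableaux satisfies $f^\mu \cdot d_\mu \leq \dim I_\mu = d_\mu^2$, giving $f^\mu \leq d_\mu$. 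The reverse inequality — equivalently the identity $\sum_\mu (f^\mu)^2 = n!$ — follows from the Robinson–Schensted correspondence, which I would invoke as a combinatorial black box. Equality $f^\mu = d_\mu$ then forces the direct sum to exhaust $I_\mu$, establishing (2) and the bijection in (3).

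The main obstacle is the third stage: verifying the orthogonality $e_{T'} e_T = 0$ for distinct standard tableaux of the same shape, and matching the count of standard tableaux to the dimension $d_\mu$. The first two stages are essentially forced by the combinatorial lemma and Wedderburn theory, but pinning down the explicit standard basis of $I_\mu$ and the RSK counting identity is where most of the work lies.
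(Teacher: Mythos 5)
The paper never proves this theorem: it is invoked as classical background on the representation theory of $S_n$ (Young symmetrizers), so there is no internal argument to measure your proposal against. Your reconstruction is the standard route and is correct in outline: the dichotomy for $a_{T_\mu}\pi b_{T_\mu}$ gives $e_{T_\mu}xe_{T_\mu}\in Fe_{T_\mu}$, hence $e_{T_\mu}$ is, up to the nonzero scalar $n!/\dim V_\mu$ (this is where characteristic zero enters), a primitive idempotent and $V_\mu$ is simple; vanishing of $e_{T_\mu}\,FS_n\,e_{T_\lambda}$ for distinct shapes, together with the count of conjugacy classes over the (algebraically closed, characteristic zero) field, shows the $V_\mu$ exhaust the irreducibles; and the one-sided orthogonality $e_{T'}e_T=0$ on standard tableaux combined with $\sum_\mu (f^\mu)^2=n!$ identifies the decomposition of the block $I_\mu$. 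Two steps should be tightened. First, in the separation step the pigeonhole cannot be run only on the single row where $\mu$ first exceeds $\lambda$: the correct statement is the dominance lemma (if no two entries of any one row of $T_\mu$ lie in a common column of $\pi T_\lambda$, then $\lambda$ dominates $\mu$), which requires counting the entries of the first $i$ rows cumulatively; the one-row argument is literally valid only when the first discrepancy occurs in row one. Second, directness of $\sum_T FS_ne_T$ is not formal from orthogonality in one direction; you need the standard triangular argument (right-multiply a vanishing sum by $e_T$ for the smallest $T$ first and use $e_T^2=n_\mu e_T\neq 0$ together with $e_{T'}e_T=0$ for $T'>T$). Invoking RSK as a black box for $\sum_\mu(f^\mu)^2=n!$ is legitimate, if heavier than strictly necessary.

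Concerning item (3): as stated in the paper it cannot be taken literally, and your argument (rightly) does not prove it in that form. The set of fillings of $D_\mu$ is finite, whereas for $\dim V_\mu>1$ there are infinitely many minimal left ideals isomorphic to $V_\mu$; moreover $T\mapsto FS_ne_T$ is not even injective on all fillings (for $n=2$ and $\mu=(2)$ both tableaux give the same one-dimensional ideal). What your third stage establishes --- distinct standard tableaux give distinct simple left ideals $FS_ne_{T_\mu}\cong V_\mu$ whose direct sum is $I_\mu$ --- is the correct content, and it is all that the paper actually uses later (the decomposition of $P_n(W)$ and the existence of a nonzero $e_{T_\mu}\cdot f$). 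So the mismatch there is a defect of the statement, not a gap in your proof.
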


Now let us apply the theorem above to PI theory. Let
$P_{n}(W)=P_{n}/(P_{n}\cap \Id(W))$, where $P_{n}$ is the space (of
$\dim_{F}(P_{n})=n!$) of all multilinear polynomials with
variables $x_{1},\ldots,x_{n}$. The group $S_{n}$ acts on $P_{n}(W)$
via $\sigma\cdot\overline{x_{i_{1}}\cdots
x_{i_{n}}}=\overline{x_{\sigma(i_{1})}\cdots x_{\sigma(i_{n})}}$
and hence we may consider its decomposition into irreducible
submodules. By the theorem above, any such submodule can be
written as $FS_{n}e_{T_{\mu}}\cdot f$, where $f$ is some
polynomial in $P_{n}(W)$. Clearly, if $f\in P_{n}(W)$ is nonzero,
then there is some partition $\mu$ and a (standard) tableau
$T_{\mu}$such that $e_{T_{\mu}}\cdot f$ is nonzero.

We are ready to prove the main result of this section.

\begin{theorem}
Let $W$ be an algebra which satisfies the $m$th Capelli identity. Then
$\Id(\mathcal{W})=\Id(W)$ where $\mathcal{W}$ is the relatively free algebra of $W$ generated by $m-1$
variables.
\end{theorem}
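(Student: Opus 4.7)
My plan is to prove both inclusions, the trivial one directly and the hard one by combining the Capelli hypothesis with the $S_n$-representation theory recalled in the excerpt plus char-$0$ polarization.

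For the inclusion $\Id(W)\subseteq\Id(\mathcal{W})$, I would observe that every element of $\mathcal{W}$ lifts to a polynomial in $F\langle x_1,\ldots,x_{m-1}\rangle$, so if $g\in\Id(W)$ and we substitute elements of $\mathcal{W}$ into $g$, the result is, up to representatives, a polynomial in $F\langle x_1,\ldots,x_{m-1}\rangle$ obtained by a substitution into $g$; this is an identity of $W$, hence lies in $\Id(W)\cap F\langle x_1,\ldots,x_{m-1}\rangle$, and therefore vanishes in $\mathcal{W}$.

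For the reverse inclusion, I would argue by contradiction: take $f\in \Id(\mathcal{W})\setminus\Id(W)$, which by char-$0$ multilinearization may be assumed multilinear in $x_1,\ldots,x_n$. The case $n\le m-1$ is immediate, since then $f\in F\langle x_1,\ldots,x_{m-1}\rangle$ and $f\in\Id(\mathcal{W})$ forces $f(\bar x_1,\ldots,\bar x_n)=0$ in $\mathcal{W}$, i.e. $f\in\Id(W)\cap F\langle x_1,\ldots,x_{m-1}\rangle\subseteq\Id(W)$. So assume $n\ge m$. Consider the $S_n$-module $FS_n\cdot f\subseteq P_n$ and its image $\bar V$ in $P_n(W)$. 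Since $f\notin\Id(W)$, $\bar V\ne 0$; by the theorem quoted in this section, some irreducible component $V_\mu$ appears, so there exists $g\in FS_n\cdot f$ and a standard tableau $T_\mu$ with $h:=e_{T_\mu}\cdot g\notin\Id(W)$. Because $g$ is an $F$-linear combination of permutations of $f$ and $e_{T_\mu}\in FS_n$, the polynomial $h$ lies in $\langle f\rangle$.

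Now the length of the first column of $T_\mu$ equals the number of rows of $\mu$, and the column antisymmetrizer makes $h$ alternating on the variables labelling that column. If $\mu$ had $\ge m$ rows, the Capelli hypothesis would force $h\in\Id(W)$, a contradiction; therefore $\mu$ has at most $k\le m-1$ rows. Applying the row symmetrizer, $h$ is symmetric in the variables within each row, so the substitution $x_i\mapsto u_{r(i)}$ (where $r(i)$ is the row of $i$ in $T_\mu$) produces a polynomial $\tilde h(u_1,\ldots,u_k)$, multihomogeneous of degree $(\mu_1,\ldots,\mu_k)$ in only $k\le m-1$ variables. Since $h$ is already symmetric inside each row, polarizing $\tilde h$ in each $u_r$ returns $h$ up to the nonzero scalar $\mu_1!\cdots\mu_k!$; hence $\tilde h\in\Id(W)\iff h\in\Id(W)$, so $\tilde h\notin\Id(W)$.

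To close the argument: $\tilde h\in F\langle x_1,\ldots,x_{m-1}\rangle\setminus\Id(W)$ means the image of $\tilde h$ in $\mathcal{W}$ is nonzero, so $\tilde h\notin\Id(\mathcal{W})$; but $\tilde h$ is obtained by substitution from $h\in\langle f\rangle$, and $\langle f\rangle\subseteq\Id(\mathcal{W})$, yielding $\tilde h\in\Id(\mathcal{W})$, a contradiction. The main obstacle to watch in writing this up is the polarization step: it is essential that char $F=0$ so that the factorials $\mu_i!$ are units and $\tilde h$ and $h$ generate the same $T$-ideal, and one must verify carefully that the row symmetry of $h$ makes this polarization recover $h$ (not just some scalar multiple that might accidentally vanish) — everything else is a bookkeeping exercise on $T$-ideal membership and the reduction via $S_n$-representation theory.
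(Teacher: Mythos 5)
Your proposal follows essentially the same route as the paper: reduce to a multilinear polynomial, act on it by $e_{T_\mu}$ for a suitable standard tableau, use the Capelli hypothesis to bound the number of rows of $\mu$ by $m-1$, identify the variables within each row, and recover the multilinear polynomial by characteristic-zero polarization. Your framing as a contradiction starting from $f\in\Id(\mathcal{W})\setminus\Id(W)$, the explicit case $n\le m-1$, and the bookkeeping that $h\in\langle f\rangle\subseteq\Id(\mathcal{W})$ are only cosmetic differences from the paper's argument.

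One step is stated inaccurately, although the conclusion you draw from it is correct: $h=e_{T_\mu}\cdot g$ is \emph{not} in general alternating on the variables of the first column, since $e_{T_\mu}=\sum_{\sigma\in\mathcal{R}_{T_\mu},\tau\in\mathcal{C}_{T_\mu}}(-1)^{\tau}\sigma\tau$ applies the row symmetrizer after the column antisymmetrizer, and the row symmetrization destroys the alternation (already for $\mu=(2,1)$). The correct argument, which is the one the paper gives, is to set $g_0=\sum_{\tau\in\mathcal{C}_{T_\mu}}(-1)^{\tau}\tau\cdot g$, which \emph{is} alternating on the first-column variables; if $\mu$ had at least $m$ rows, the Capelli hypothesis would give $g_0\in\Id(W)$, and then $h=\sum_{\sigma\in\mathcal{R}_{T_\mu}}\sigma\cdot g_0\in\Id(W)$ because $\Id(W)$ is stable under permutations of variables, contradicting $h\notin\Id(W)$. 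With this one-line repair the remaining steps of your write-up (row symmetry of $h$, the substitution $x_i\mapsto u_{r(i)}$, and the polarization identity producing $\mu_1!\cdots\mu_k!\,h$, valid since $\operatorname{char}F=0$) go through exactly as in the paper.
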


\begin{proof}
It is clear that $\Id(W)\subset \Id(\mathcal{W})$. For the other direction
suppose $f$ is a multilinear nonidentity of $W$ of degree $n$. Then, by the theorem above,
there is a partition $\mu$ of $n$ and a tableau $T_{\mu}$ such that $g=e_{T_{\mu}}\cdot f$
is a nonidentity of $W$.

Let $g_{0}=\sum_{\tau\in
\mathcal{C}_{T_{\mu}}}(-1)^{\tau}\tau\cdot f=\sum_{\tau\in
\mathcal{C}_{T_{\mu}(1)}}(-1)^{\tau}\tau\cdot\left(\sum_{k=1}^{l}(-1)^{\tau_{k}}\tau_{k}\cdot
f\right)$, where $\tau_{1},\ldots,\tau_{l}$ is a full set of
representatives of $\mathcal{C}_{T_{\mu}(1)}$-cosets in
$C_{T_{\mu}}$.

Let $h_{\mu}$ (the height of $\mu$) denote the number of rows
in the Young diagram $D_{\mu}$. If $h_{\mu} \geq m$, the
polynomial $g_{0}$ is alternating on the variables of the first
column and hence by assumption is an identity of $W$. But in that
case also the polynomial $g=\sum_{\sigma\in
R_{T_{\mu}}}\sigma\cdot g_{0}$ is in $\Id(W)$ contradicting our
assumption and so $h_{\mu}$ must be smaller than $m$.

Let us now focus on the rows of $D_{\mu}$. Since
$g=\sum_{\sigma\in \mathcal{R}_{T_{\mu}}}\sigma\cdot g_{0}$, it is
symmetric in the variables corresponding to any row of $T_{\mu}$
and so if for any $i=1,\ldots, h_{\mu}$ we replace by $y_{i}$
\textit{all} variables in $g$ corresponding to the $i$th row we
obtain a polynomial $\hat{g}$ which yields $g$ by
multinearization. In particular $g\in \Id(W)$ if and only if
$\hat{g}\in \Id(W)$. In order to conclude, note that $\hat{g}$ can
be regarded as an element of $\mathcal{W}$ (at most $m-1$ variables) and nonzero. This shows
that $g$ is nonidentity of $\mathcal{W}$ and hence also $f$. This
proves the theorem.
\end{proof}

\begin{remark}
In the sequel, if $W$ satisfies the $m$th Capelli identity, we'll consider affine relatively free algebras $\mathcal{W}$ with at least $m-1$ generating variables.

\end{remark}

\begin{definition}
Suppose $W$ is an affine algebra. Any algebra of the form
$$
F\left\langle x_{1},\ldots,x_{m}\right\rangle /\Id(W)\cap F\left\langle x_{1},\ldots,x_{m}\right\rangle
$$
having the same $T$-ideal as $W$ is called \emph{affine relatively
free algebra of $W$}.
\end{definition}

We close this section with the following useful lemma.

\begin{lemma}\label{useful lemma}
Let $W$ be a PI algebra which satisfies the $m$th Capelli
polynomial and let $\mathcal{W}=F\langle
x_{1},\ldots,x_{n}\rangle/\widehat{\Id(W)}$ be an affine
relatively free algebra where $n \geq m-1$. Let $I$ be any
$T$-ideal and denote by $\widehat{I}$ the ideal of $\mathcal{W}$
generated (or consisting rather) by all evaluation on
$\mathcal{W}$ of elements of $I$. Then
$\Id(\mathcal{W}/\widehat{I})= \Id(W) + I$.

\begin{proof}
Clearly,
$\Id(\mathcal{W}/\widehat{I}) \supseteq \Id(W)$ and
$\Id(\mathcal{W}/\widehat{I}) \supseteq I$ so
$\Id(\mathcal{W}/\widehat{I})$ contains $\Id(W) + I $. For the
converse note that

$$\mathcal{W}/\widehat{I}=(F\langle x_{1},\ldots,x_{n} \rangle)/\widehat{\Id(W)}/\widehat{I}=F\langle x_{1},\ldots,x_{n} \rangle/(\widehat{\Id(W)}+\widehat{I})=F\langle
x_{1},\ldots,x_{n}\rangle /\widehat{\Id(W)+I}.$$

Then, since $F\langle x_{1},\ldots,x_{n}\rangle
/\widehat{\Id(W)+I}$ satisfies the $n$th Capelli identity we obtain

$$\Id(F\langle x_{1},\ldots,x_{n}\rangle /\widehat{\Id(W)+ I})= \Id(W)+ I$$
as desired.

\end{proof}

\end{lemma}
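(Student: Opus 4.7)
My plan is to identify $\mathcal{W}/\widehat{I}$ with the affine relatively free algebra on $n$ generators for the $T$-ideal $\Id(W) + I$, and then appeal to the main theorem of this section. The inclusion $\Id(W) + I \subseteq \Id(\mathcal{W}/\widehat{I})$ is immediate from the construction: $\mathcal{W}$ already satisfies $\Id(W)$ (so does any quotient), and by definition $\widehat{I}$ is the ideal of $\mathcal{W}$ swept out by evaluations of $I$, so every element of $I$ becomes an identity after quotienting by $\widehat{I}$.

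For the reverse inclusion, I would first apply the third isomorphism theorem to write
\[
\mathcal{W}/\widehat{I} \;=\; F\langle x_{1},\ldots,x_{n}\rangle \Big/ \Bigl((\Id(W)\cap F\langle x_{1},\ldots,x_{n}\rangle)+(I\cap F\langle x_{1},\ldots,x_{n}\rangle)\Bigr),
\]
and then establish the equality
\[
(\Id(W)\cap F\langle x_{1},\ldots,x_{n}\rangle)+(I\cap F\langle x_{1},\ldots,x_{n}\rangle) \;=\; (\Id(W)+I)\cap F\langle x_{1},\ldots,x_{n}\rangle .
\]
The inclusion $\subseteq$ is trivial. For the reverse inclusion, given $f = a + b$ with $a \in \Id(W)$, $b \in I$, and $f$ involving only $x_{1},\ldots,x_{n}$, I would apply the endomorphism $\pi$ of $F\langle X\rangle$ sending $x_{i}\mapsto x_{i}$ for $i \le n$ and $x_{j}\mapsto 0$ otherwise. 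Since both $\Id(W)$ and $I$ are $T$-ideals they are invariant under $\pi$, hence $\pi(a)$ and $\pi(b)$ lie in the corresponding intersected $T$-ideals and satisfy $\pi(a) + \pi(b) = \pi(f) = f$. This identifies $\mathcal{W}/\widehat{I}$ with the affine relatively free algebra on $n$ variables of the $T$-ideal $\Id(W)+I$.

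To finish, observe that $\Id(W)+I$ contains the $m$th Capelli polynomial (since $\Id(W)$ does) and that $n \ge m-1$. Applying the main theorem of this section to any algebra with $T$-ideal of identities equal to $\Id(W)+I$ (for instance $F\langle X\rangle/(\Id(W)+I)$ itself), the ideal of identities of its affine relatively free algebra on $n \ge m-1$ generators coincides with $\Id(W)+I$. This gives $\Id(\mathcal{W}/\widehat{I}) \subseteq \Id(W)+I$, completing the proof. The step I expect to be the main obstacle is the decomposition of $(\Id(W)+I)\cap F\langle x_{1},\ldots,x_{n}\rangle$ into a sum of intersections, which genuinely uses the $T$-ideal property of both summands; once this is in hand the lemma reduces to a direct application of the Capelli/finite-generation theorem proved earlier in this section.
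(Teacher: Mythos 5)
Your proposal is correct and follows essentially the same route as the paper: both identify $\mathcal{W}/\widehat{I}$ with the $n$-generated relatively free algebra of the $T$-ideal $\Id(W)+I$ and then invoke the section's theorem that, in the presence of a Capelli identity $c_{m}$ with $n\geq m-1$, this quotient has $T$-ideal exactly $\Id(W)+I$. Your retraction argument with the endomorphism $\pi$ (sending $x_{j}\mapsto 0$ for $j>n$) simply makes explicit the equality $\widehat{\Id(W)}+\widehat{I}=\widehat{\Id(W)+I}$ that the paper asserts without proof in its chain of identifications, and it is a valid justification.
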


\end{section}

\begin{section}{\label{Shirshov-base}Shirshov base}
\begin{definition}
Let $W$ be an affine algebra over $F$. Let $a_{1},\ldots,a_{s}$ be
a generating set of $W$. Let $m$ be a positive integer and let
$Y$ be the set of words in $a_{1},\ldots,a_{s}$ of length $\leq m$.
We say that $W$ has a Shirshov base of length $m$ and of height
$h$ if $W$ is spanned (over $F$) by elements of the form $y_{1}^{n_{1}}\cdots y_{l}^{n_{l}}$,
where $y_{i}\in Y$ and $l\leq h$.
\end{definition}

The following fundamental theorem was proved by Shirshov.
\begin{theorem}
If an affine algebra $W$ has a multilinear PI of degree $m$, then
it has a Shirshov base of length $m$ and some height $h$ where $h$ depends
only on $m$ and the number of generators of $W$.
\end{theorem}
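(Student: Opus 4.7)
The plan is to combine a PI-theoretic reduction with Shirshov's purely combinatorial lemma on word divisibility. Fix a generating set $a_1,\ldots,a_s$ of $W$, impose a total order $a_1 > a_2 > \cdots > a_s$, and extend it to a length-compatible lexicographic order $\prec$ on the free monoid on $a_1,\ldots,a_s$ (so that when comparing two words one reads left to right and, at the first disagreement, the smaller letter gives the smaller word, with longer words being larger when one is a prefix of the other). Call a word $w$ \emph{$m$-divisible} if it admits a factorization $w = w_0\, v_1 v_2 \cdots v_m\, w_{m+1}$ with $v_1 \succ v_2 \succ \cdots \succ v_m$, each $v_i$ nonempty of length at most $m$.

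For the PI step, after rescaling I would assume the given multilinear identity $f(x_1,\ldots,x_m) \in \Id(W)$ has the monomial $x_1 x_2 \cdots x_m$ with coefficient $1$. For fixed words $v_1 \succ \cdots \succ v_m$ of length $\leq m$, substituting $x_i = v_i$ in $f$ and multiplying by $w_0$ on the left and $w_{m+1}$ on the right yields a relation valid in $W$ that expresses $w_0 v_1 \cdots v_m w_{m+1}$ as an $F$-linear combination of the words $w_0 v_{\sigma(1)} \cdots v_{\sigma(m)} w_{m+1}$ for $\sigma \neq \id$. The length-compatible lex order is chosen precisely so that each such permuted word is strictly $\prec$-smaller than the original. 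Iterating via noetherian induction on $\prec$ (legitimate because only finitely many words of a given length exist), I conclude that in $W$ every element lies in the $F$-span of images of non-$m$-divisible words.

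The main work, and the principal obstacle, is the combinatorial height theorem: there exists $h = h(m,s)$ such that every non-$m$-divisible word in the alphabet $\{a_1,\ldots,a_s\}$ can be written as $y_1^{n_1} y_2^{n_2} \cdots y_l^{n_l}$ with $l \leq h$ and each $|y_i| \leq m$. The standard route here is a Ramsey/pigeonhole argument: cut a long non-$m$-divisible word into consecutive blocks of length $m$; among sufficiently many blocks either the same block repeats many times in a row (furnishing a power $y_i^{n_i}$) or one extracts $m$ pairwise distinct blocks which, by reindexing, appear in a strictly $\succ$-decreasing pattern, contradicting non-$m$-divisibility. A careful induction alternating between ``periodic runs'' of a single block and transitions between such runs caps the number $l$ of factors in terms of $m$ and the number $s^m$ of possible blocks of length $m$.

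Combining the two steps, the set $Y$ of words in $a_1,\ldots,a_s$ of length at most $m$ together with the uniform bound $h(m,s)$ from the combinatorial lemma produce the required Shirshov base: every element of $W$ is an $F$-linear combination of products $y_1^{n_1} \cdots y_l^{n_l}$ with $y_i \in Y$ and $l \leq h$. The main difficulty is entirely concentrated in the combinatorial height lemma; the PI ingredient is merely the reduction that allows one to replace an arbitrary word by non-$m$-divisible ones modulo $\Id(W)$.
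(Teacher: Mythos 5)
Your reduction step rests on the claim that if $v_1 \succ v_2 \succ \cdots \succ v_m$ in your lexicographic order (with the convention that a word is smaller than any word having it as a proper prefix), then every nontrivially permuted product $v_{\sigma(1)}\cdots v_{\sigma(m)}$ is strictly $\prec$ the product $v_1\cdots v_m$. That claim is false precisely in the prefix case: take $v_1=ba$ and $v_2=b$, so $v_1 \succ v_2$ by your convention, yet $v_1v_2=bab \prec bba=v_2v_1$. So the relation you extract from the multilinear identity may rewrite a word in terms of strictly \emph{larger} words of the same length, and the noetherian induction on $\prec$ does not go through as stated. This is the classical subtlety in Shirshov's argument; it is repaired either by defining $m$-divisibility directly through the requirement that every nontrivial permutation of the factors strictly decreases the word, or by demanding that each comparison $v_i \succ v_j$ be decided at an actual letter disagreement (equivalently, by comparing the factors together with their tails) — and then the combinatorial lemma has to be proved for that stronger notion.

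More seriously, the combinatorial height lemma, which you yourself identify as carrying all the weight, is not proved, and the one concrete idea you offer for it is incorrect: extracting $m$ pairwise distinct blocks and letting them ``by reindexing, appear in a strictly $\succ$-decreasing pattern'' gives no contradiction with non-$m$-divisibility, because in the definition of $m$-divisibility the factors $v_1 \succ \cdots \succ v_m$ must occur in the word in that left-to-right order; a long word whose consecutive length-$m$ blocks are pairwise distinct but lexicographically \emph{increasing} is not thereby $m$-divisible. The real argument must produce a decreasing chain in the order of occurrence — for instance Shirshov's double induction on the alphabet size and on $m$, or a Dilworth/Ramsey-type argument applied to the lexicographic order on the suffixes beginning at the various positions — and it is exactly there that the bound $h=h(m,s)$ arises. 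Note also that the paper states Shirshov's theorem without proof, so there is no argument in the text for your sketch to lean on: as written, your proposal reduces the theorem to an unproved, and as sketched incorrectly attacked, combinatorial statement, together with a rewriting step whose monotonicity claim fails.
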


In fact, there is an important special case where we can get even ``closer''
to representability.

\begin{lemma}\label{finite-module}
Let $C$ be a commutative algebra over $F$ and let $W=C\left\langle a_{1},\ldots,a_{s}\right\rangle $.
Suppose $W$ has a Shirshov base. If for every $i=1,\dots,s$, the element $a_{i}$ is
integral over $C$, then $W$ is a finite module over $C$.
\end{lemma}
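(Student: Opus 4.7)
The plan is to combine the given Shirshov base with the integrality hypothesis to produce a finite $C$-module spanning set for $W$. Since $F \subseteq C$ and $C$ is central in $W$, the Shirshov base immediately yields a $C$-module spanning set for $W$ consisting of the monomials $y_1^{n_1}\cdots y_l^{n_l}$ with $y_i \in Y$, $l \leq h$, and $n_i \geq 0$. The set $Y$ of words of length $\leq m$ in the generators $a_1,\ldots,a_s$ is finite, and so is the set of tuples $(y_1,\ldots,y_l)$ of length at most $h$. Hence $W$ will be a finite $C$-module as soon as we can bound all exponents $n_i$ uniformly.

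Once one knows that each $y \in Y$ is integral over $C$, the remainder is formal: writing an integrality relation $y^{N_y} = \sum_{j < N_y} c_{j,y}\, y^j$ with $c_{j,y} \in C$, any monomial with some $n_i \geq N_{y_i}$ can be rewritten as a $C$-linear combination of monomials with $y_i^j$ ($j < n_i$) in slot $i$, because the central scalars $c_{j,y_i}$ slide freely across the neighboring factors $y_1^{n_1}\cdots y_{i-1}^{n_{i-1}}$ and $y_{i+1}^{n_{i+1}}\cdots y_l^{n_l}$. Iterating this reduction in each slot produces a finite spanning set of monomials satisfying $n_i < N_{y_i}$ for every $i$, and therefore exhibits $W$ as a finitely generated $C$-module.

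The main obstacle is thus to show that each $y \in Y$ is integral over $C$. Each such $y = a_{i_1}\cdots a_{i_k}$ is a product of the integral generators $a_{i_j}$, and since $C$ is central in $W$ the subring $C[y] \subseteq W$ generated by $C$ and the single element $y$ is commutative; integrality of $y$ over $C$ is therefore equivalent to $C[y]$ being a finitely generated $C$-module. In a commutative ambient ring this would be automatic, since products of elements integral over $C$ are integral over $C$, but in a noncommutative $C$-algebra generated by several integral elements the subring they span need not be a finite $C$-module. What rescues the argument here is precisely the Shirshov base: it controls how the successive powers $y, y^2, y^3,\ldots$ expand in terms of a bounded family of ``shapes'' of Shirshov monomials, and when combined with the integrality of the constituent generators $a_{i_j}$ this forces a monic relation $y^N + c_{N-1}y^{N-1} + \cdots + c_0 = 0$ with coefficients $c_i \in C$. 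Making this last step quantitative, by tracking which shapes can appear in the expansion of $y^n$ and invoking the integrality relations of the $a_i$ to truncate them, is the delicate technical point of the proof.
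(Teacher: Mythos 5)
Your first two paragraphs are correct and, once one knows that \emph{every element $y$ of the Shirshov base $Y$} is integral over $C$, they already constitute the complete standard proof: centrality of $C$ makes the Shirshov $F$-spanning set a $C$-spanning set, and a monic relation $y^{N_y}=\sum_{j<N_y}c_{j,y}y^{j}$ lets you lower the exponent in any slot of $y_{1}^{n_{1}}\cdots y_{l}^{n_{l}}$ without leaving that family of monomials, so $W$ is spanned over $C$ by the finitely many such monomials with all $n_{i}<\max_{y}N_{y}$. The genuine gap is your third paragraph: you never prove that the words $y\in Y$ are integral over $C$, you only assert that the Shirshov base together with integrality of the factors $a_{i_j}$ ``forces'' a monic relation and defer this as the delicate point. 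No such argument can exist, because the implication is false. Take $C=F$ and $W=F\langle a_{1},a_{2}\rangle/(a_{1}^{2},a_{2}^{2})$. Every nonzero word of $W$ is alternating, hence of one of the forms $(a_{1}a_{2})^{n}$, $(a_{2}a_{1})^{n}$, $(a_{1}a_{2})^{n}a_{1}$, $(a_{2}a_{1})^{n}a_{2}$, so $W$ has a Shirshov base of length $2$ and height $2$ (and $W$ is even PI: $a_{1}\mapsto e_{12}$, $a_{2}\mapsto te_{21}$ embeds it into $M_{2}(F[t])$); the generators are square-zero, hence integral over $F$. Yet $a_{1}a_{2}$ generates a polynomial ring inside $W$, so it is not integral over $F$ and $W$ is not a finite $F$-module. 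Thus integrality of the generators alone, even in the presence of a Shirshov base, yields neither integrality of the words in $Y$ nor finiteness.

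The way out is that the lemma must be read with the hypothesis that all elements of the Shirshov base $Y$ occurring in the spanning monomials are integral over $C$, and this is exactly how it is applied later in the paper: in the trace-ring section the elements of $Y_{J}$ are nilpotent, hence integral, and the elements of $Y_{ss}$ are made integral over the Noetherian trace ring $R$ via Cayley--Hamilton, and only then is the lemma invoked. (The paper states the lemma without proof, with the generators-only wording, which is a misstatement of the standard result.) Under the corrected hypothesis your exponent-bounding argument is the entire proof; the program sketched in your last paragraph, deducing integrality of $y\in Y$ from integrality of the $a_{i}$, should be dropped, since the counterexample shows it cannot be carried out.
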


If in addition, our commutative algebra $C$ is Noetherian and unital we reach our goal, as the next theorem
shows.

\begin{theorem} $($Beidar \cite{Bei}$)$\label{representable-over-commutative}
Let $W$ be an $F$ algebra and let $C$ be a unital commutative Noetherian $F$ algebra. If $W$
is a finite module over $C$, then $W$ is representable.
\end{theorem}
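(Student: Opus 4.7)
The goal is to embed $W$ as an $F$-algebra into $M_N(L)$ for some integer $N$ and some field extension $L/F$. My plan is to pass from $W$ to its ring of $C$-linear endomorphisms, then to exploit the Noetherianness of $C$ via the nilradical filtration to build an explicit block-triangular matrix embedding over a suitably large field.

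First, I would reduce to a convenient setup. By adjoining a unit (which preserves the hypothesis of being a finite $C$-module after enlarging $C$ to $C + F \cdot 1$, and does no harm to representability since representability descends to subalgebras), I may assume $W$ is unital. Replacing $C$ by its image in $Z(W)$, I may further assume $C \subseteq Z(W)$. Left multiplication then gives an injective $F$-algebra homomorphism $\lambda : W \hookrightarrow E := \operatorname{End}_{C}(W)$, and because $C$ is Noetherian and $W$ is finitely generated (hence finitely presented) as a $C$-module, $E$ is itself finite as a $C$-module. So it suffices to embed $E$ into $M_N(L)$ for a suitable $N$ and $L$.

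Second, let $\mathcal{N} = \operatorname{Nil}(C)$ be the nilradical; Noetherianness of $C$ gives $\mathcal{N}^s = 0$ for some $s \geq 1$. Since $\mathcal{N}$ is central in $W$, the chain
\[
W \supseteq \mathcal{N}W \supseteq \mathcal{N}^2 W \supseteq \cdots \supseteq \mathcal{N}^s W = 0
\]
is a finite filtration by two-sided ideals preserved by left multiplication, and each associated graded piece $\operatorname{gr}^i W := \mathcal{N}^i W / \mathcal{N}^{i+1} W$ is a finite module over the reduced Noetherian ring $\bar C := C/\mathcal{N}$. The total ring of quotients of $\bar C$ is a finite product of fields $Q = \prod_j K_j$ (indexed by the minimal primes of $\bar C$), and $\bar C \hookrightarrow Q$. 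Choosing a field $L$ that contains each $K_j$ as a subfield, I tensor the filtration with $L$ and pick $L$-linear splittings of the induced filtration of $W \otimes L$. This produces an $L$-vector-space decomposition into finitely many finite-dimensional pieces $V_i := \operatorname{gr}^i W \otimes_{\bar C} L$, and since the action of $W$ on itself is filtration-preserving, it is realized by block-upper-triangular matrices in $M_N(L)$ with $N = \sum_i \dim_L V_i$: the $(i,i)$-block encodes the $W/\mathcal{N}W$-action on $V_i$, and the strictly-upper-triangular blocks record the connecting data.

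The main obstacle is to ensure that the assembled representation is faithful on all of $W$ and not merely on the semisimple reduction $W/\mathcal{N}W$. The localization map $\bar C \to Q$ can kill $C$-torsion in the graded pieces, so one cannot simply read the embedding off the diagonal blocks; rather one must verify that the strictly-upper-triangular connecting blocks, together with the diagonal action, suffice to recover every nonzero element of $W$. This is where the full strength of Noetherianness of $C$ and the finite presentation of $W$ is used, together with the faithfulness of left multiplication $\lambda$: the splitting data for $W \otimes L$ has to be chosen so as to be compatible with $\lambda$, and this compatibility is what forces the combined block-triangular action to be injective on $W$, completing the embedding into $M_N(L)$.
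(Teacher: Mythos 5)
There is a genuine gap, and it sits exactly at the point you flag in your last paragraph. Your construction represents $W$ through its action on $W\otimes_{\bar C}Q$ (equivalently, on the graded pieces base-changed along $\bar C\hookrightarrow Q$), where $Q$ is the total quotient ring of $\bar C=C/\operatorname{Nil}(C)$. But inverting the non-zero-divisors of $\bar C$ kills every element of $W$ that is torsion with respect to such an element, and no choice of ``splittings compatible with $\lambda$'' can recover them: an element annihilated by the base change acts through data that simply no longer exists in $\bigoplus_i V_i$. Already for $C=F[t]$ (reduced, so your nilradical filtration is trivial) and $W=F[t]\times F[t]/(t)$, one has $W\otimes_C F(t)\cong F(t)$ and the idempotent $(0,1)$ is sent to $0$, so the resulting block representation is not faithful; the connecting blocks of a filtration cannot help because the offending element dies in every graded piece after tensoring. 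Your closing sentence (``this compatibility is what forces the combined block-triangular action to be injective'') is an assertion of the conclusion, not an argument, and it is false for the localization you chose. A secondary point: ``tensor the filtration with $L$'' is not well defined, since there is no ring map from $Q=\prod_j K_j$ to a field when there is more than one factor; one must work factor by factor, but this does not affect the main problem.

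The paper's proof is organized precisely around this torsion difficulty, and none of its ingredients appear in your proposal. One localizes at the set of non-zero-divisors \emph{on the module $W$}, i.e.\ at the complement of the union of the maximal elements $P_1,\dots,P_k$ of $\operatorname{Ass}_C(W)$, so that $W$ genuinely embeds in $S^{-1}W$; but $S^{-1}C$ is then only semilocal, not a product of fields, so one cannot finish in one step. Instead one inducts on the length of chains of primes in $\operatorname{Ass}_C(W)$: a further localization $T^{-1}W=W_1$ handles the non-maximal associated primes by induction, its kernel $U$ satisfies $J^{r}U=0$ for $J=\cap P_i$, and the Artin--Rees lemma produces $r'$ with $J^{r'}W\cap U=0$, exhibiting $W$ as a subdirect product of $W/U\subseteq W_1$ (representable by induction) and $W/J^{r'}W$, which has finite length; the finite-length case is settled by passing to the Artinian quotient of $C$, decomposing it into local Artinian factors, and using that a complete local Artinian $F$-algebra contains a coefficient field $K\cong C/P$, so that the module becomes a finite-dimensional $K$-space. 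To repair your argument you would need essentially all of this: associated primes of $W$ rather than of $\bar C$, the Artin--Rees splitting into a subdirect product, the induction, and the Artinian/coefficient-field analysis for the finite-length piece.
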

\begin{proof}
The simplest case is when $C$ is a local Artinian ring and $W$ is
of finite length (over $C$). Let $P$ be the unique maximal ideal
of $C$. Since $C$ is Artinian we know that $P$ is nilpotent, thus
$C$ is complete in the $P$-adic metric. Thus $C$ contains a field
$(F\subset)K$ such that $K+P=C$, and so $K$ is isomorphic to
$C/P$. Let $W=W_{(0)}\supseteq W_{(1)}\supseteq\cdots\supseteq
W_{(m)}=0$ be a composition series of $W$ (as a $C$ module). We
have that $W_{(i)}/W_{(i+1)}$ is isomorphic to $C/P$, so it is a
one dimensional $K$-space. It follows that $W$ is a finite
dimensional $K$-space.

Now assume that $W$ has finite length as a $C$ module (without
any other assumptions on the ring $C$). This yields that $C'=C/ann_{C}(W)$
is Artinian. Therefore, we may decompose $C'$ into a (finite) direct
product of local Artinian rings $C'=\times_{i=1}^{m}C_{i}$. This
decomposition induces a decomposition of $W$, namely $W=\oplus_{i=1}^{m}C_{i}W$, and since
each $C_{i}W$ is a $C_{i}$ module of finite
length, we are back to the first case (Note that here we make use of the
fact that a finite number of fields with the same characteristic can
be embedded in a larger field).

We proceed now to the general case. Recall that the set of associated
primes $Ass_{C}(W)$ is a finite set. Let $P_{1},\ldots,P_{k}$ be its
maximal elements and let $S=C-\cup_{i=1}^{k}P_{i}$ (a multiplicative
set). Since $S$ is the set of nonzero divisors of $W$, the localization
by $S$, $C_{0}=S^{-1}C$, induces an embedding of $W$ in $W_{0}=S^{-1}W$.
Moreover, the maximal ideals of $C_{0}$ are $S^{-1}P_{1},\ldots,S^{-1}P_{k}$
and $Ass_{C_{0}}(W_{0})=\{S^{-1}P|\, P\in Ass_{C}(W)\}$ (the last
equality is due to the maximality of the $P_{i}$'s in $Ass_{C}(W)$).
Thus we may assume from the beginning that $C$ is a semilocal ring
whose maximal ideals are contained in $Ass_{C}(W)$.

We continue by induction on $d_{C}(W)$, where $d_{C}(W)$ represents
the greatest length of a descending chain of prime ideals from $Ass_{C}(W)$.
If $d_{C}(W)=1$, then all the ideals of $Ass_{C}(W)$ are the maximal
ideals of $C$. Since $Ass_{C}(W)$ contains all minimal primes containing
$ann_{C}(W)$, we conclude that all primes which contain $ann_{C}(W)$
are maximal. Hence $W$ is of finite length and we are done.

Suppose now that $d_{C}(W)>1$. Let $Q_{1},\ldots,Q_{l}$ be all maximal
elements in the set $H=Ass_{C}(W)-\{P_{1},\ldots,P_{k}\}$ and denote
$T=C-\cup Q_{i}$. Write $C_{1}=T^{-1}C,\, W_{1}=T^{-1}W$ and let
$U$ be the kernel of the canonical map $W\rightarrow W_{1}$. It
is easy to calculate the associated primes:
\[
Ass_{C_{1}}(W_{1})=\{T^{-1}P|\, P\in H\},\, Ass_{C}(U)=\{P_{1},\ldots,P_{k}\}
\]
Since $d_{C_{1}}(W_{1})<d_{C}(W)$, we know by the induction hypothesis
that $W_{1}$ is representable.

There is $r>0$ for which $J^{r}U=0$, where $J=\cap P_{i}$ is the
Jacobson radical of $C$. Using the Artin-Rees lemma we obtain some
$r'$ for which $J^{r'}W\cap U=0$. Thus, $W$ is a sub direct product
of $W/U$ and $W/J^{r'}W$. Finally, these two algebras are representable:
$W/U$ - since it is contained in $W_{1}$, and $W/J^{r'}W$ - since
it is of finite length (recall that there is a product of primes which
annihilate it).
\end{proof}

\end{section}

\begin{section}{\label{The-trace-ring}The trace ring}

Suppose $A$ is an $M$-dimensional $F$-algebra and let
$\mathcal{A}$ be an affine relatively free algebra, say generated
by the elements $\{x_1+\Id(A),\ldots,x_v+\Id(A)\}$, where
$x_1,\ldots,x_v$ are noncommuting variables.

It is well known that $\mathcal{A}$ may be interpreted as an
algebra of ``generic elements''. Let us recall briefly the
construction.

Let $K=F\left(\{t_{i,j}|\, i=1,\ldots,v\,\,; j=1,\ldots,M\}\right)$ and
suppose $a_{1},\ldots,a_{M}$ is an $F$-basis of $A$. Consider the
$F$-subalgebra $\mathcal{A}'$ of $A_{K}=A\otimes_{F}K$ generated
by
\[
y_{i}=\sum_{j=1}^{M}t_{i,j}a_{j}, i=1,\ldots,v.
\]
It is well known that the map
$\phi:\mathcal{A}'\rightarrow\mathcal{A}$ determined by
$\phi(y_{i})=x_{i}+\Id(A)$, is an $F$-algebra isomorphism.
Henceforth we will not distinguish between $\mathcal{A}$ and
$\mathcal{A}'$ and we denote both by $\mathcal{A}$. It follows
from the construction of the algebra of generic elements that
$\mathcal{A}$ is representable and so we fix for the rest of the
paper an embedding $\mathcal{A} \subseteq A_{K}$.

Suppose $A=A_{1}\times\cdots\times A_{k}$, where $A_{r}$ is finite
dimensional over $F$, for $r=1,\ldots,k$. Fix a $K$-vector space
decomposition $(A_{r})_{K} =(A_{r})_{K}^{ss}\oplus (J_{r})_{K}$
where $(A_{r})_{K}^{ss}$ is a maximal semisimple subalgebra of
$(A_{r})_{K}$ and $(J_{r})_{K}$ is its Jacobson radical. Consider
the embedding

\[
(A_{r})_{K}^{ss}\hookrightarrow End_{K}((A_{r})_{K}^{ss})
\]
given by $\ensuremath{a(x)=ax}\ensuremath{(a,x\in (A_{r})_{
K}^{ss}})$. Let $\hat{\mathcal{A}}$ be the $F$-subalgebra of
$A_{K}$ generated by the projections of $\mathcal{A}$ into
$A_{K}^{ss}=(A_{1})_{K}^{ss}\times\cdots\times (A_{k})_{K}^{ss}$
and $J_{K}=(J_{1})_{K}\times\cdots\times (J_{k})_{K}$. It is clear
that $\hat{\mathcal{A}}$ is an affine $F$-algebra containing (the
image of) $\mathcal{A}$. Moreover, we may choose generators of
$\hat{\mathcal{A}}$ such that the corresponding Shirshov base is
$Y=Y_{ss}\sqcup Y_{J},$ where $Y_{ss}\subset A_{K}^{ss}$ and
$Y_{J}\subset J_{K}$ (indeed, choosing generators
$b_{1},\ldots,b_{s}$ of $\hat{\mathcal{A}}$ either from
$A_{K}^{ss}$ or $J_{K}$, then $b_{i_{1}}b_{i_{2}}\cdots
b_{i_{t}}\in Y_{ss}$, $t\leq m$, if and only if $b_{i_{j}}\in
A_{K}^{ss}$, for all $j=1,\ldots,t$). Since $J_{K}$ is nilpotent,
it is clear that the elements of $Y_{J}$ are integral over $F$,
however this may not be the case for elements in $Y_{ss}$. Our
goal is to extend $F$ in a suitable way so that the elements of
$Y_{ss}$ ``become'' integral. Consider the elements of $Y_{ss}$ as
elements of $End_{K}((A_{1})_{K}^{ss})\times\cdots\times
End_{K}((A_{k})_{K}^{ss})$ (via the above embedding) and define $R$
to be the unital $F$-subalgebra of $K\times\cdots\times K$ ($k$
times) generated by
$Tr(Y_{ss})=\{\left(Tr(p_{1}(u))),\ldots,Tr(p_{k}(u)): u \in
Y_{ss}\right)\}$, where $p_{r}$ is the projection
$End_{K}((A_{1})_{K}^{ss})\times\cdots\times End_{K}((A_{k})_{K}^{ss})
\rightarrow End_{K}((A_{r})_{K}^{ss})$. Since $Y_{ss}$ is a finite
set, it is clear that $R$ is a commutative unital Noetherian
$F$-algebra.

Let us define an action of $K\times\cdots\times K$ ($k$-times)
(and hence also of $R$) on $A_{K}=(A_{1})_{K}\times\cdots\times
(A_{k})_{K}$ by
\[
(c_{1},\ldots,c_{k})\cdot(b_{1},\ldots,b_{k})=(c_{1}b_{1},\ldots,c_{k}b_{k}),
\]
where $(c_{1},\ldots,c_{k}) \in K^{(k)}$ and $b_{1}\in
(A_{1})_{K},\ldots,b_{k}\in (A_{k})_{K}$.

Let $d_{r}=dim_{F}A_{r}^{ss}$ and denote by
$d=max\{d_{r}:r=1,\ldots,k\}$.

\begin{theorem}
$\mathcal{A}_{R}=R\mathcal{\cdot A}$ is a finite module over $R$.
\end{theorem}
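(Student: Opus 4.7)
The plan is to reduce to Lemma~\ref{finite-module} applied to the slightly larger affine $R$-algebra $\hat{\mathcal{A}}_R = R\cdot\hat{\mathcal{A}}$. Since $\mathcal{A}_R$ sits as an $R$-submodule inside $\hat{\mathcal{A}}_R$ and $R$ is a commutative Noetherian $F$-algebra, finiteness of $\hat{\mathcal{A}}_R$ over $R$ will immediately give the same for $\mathcal{A}_R$. Now $A_K=A\otimes_F K$ is PI (since $A$ is), and hence so is $\hat{\mathcal{A}}\subseteq A_K$; moreover $\hat{\mathcal{A}}$ is affine over $F$ by construction. Shirshov's theorem therefore gives it a Shirshov base which, by the choice of generators described just before the statement, may be taken to be $Y=Y_{ss}\sqcup Y_J$ of length $m$ and some height $h$. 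Consequently $\hat{\mathcal{A}}_R$ is spanned over $R$ by monomials $y_1^{n_1}\cdots y_\ell^{n_\ell}$ with $y_i\in Y$ and $\ell\leq h$, so Lemma~\ref{finite-module} reduces the task to showing that each element of $Y$ is integral over $R$.

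The easy half is $Y_J$: since the two-sided ideal $J_K=(J_1)_K\times\cdots\times(J_k)_K$ is nilpotent, every $y\in Y_J$ satisfies $y^N=0$ for some uniform $N$, and so is integral over $F\subseteq R$ at once.

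The substantive step is $Y_{ss}$. Each $y\in Y_{ss}$ decomposes as $y=(y^{(1)},\ldots,y^{(k)})$ with $y^{(r)}=p_r(y)\in (A_r)_K^{ss}$, and under the embedding $(A_r)_K^{ss}\hookrightarrow \End_K((A_r)_K^{ss})$ by left multiplication $y^{(r)}$ becomes a $K$-linear endomorphism of a $K$-vector space of dimension $d_r$. By Cayley--Hamilton, $y^{(r)}$ satisfies its characteristic polynomial of degree $d_r$; since $\chara F=0$, Newton's identities express the coefficients of this polynomial as $\mathbb{Q}$-polynomials in the power sums $\Tr\bigl((y^{(r)})^j\bigr)=\Tr(p_r(y^j))$ for $j=1,\ldots,d_r$. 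Assembling these coefficients componentwise over $r=1,\ldots,k$ produces a monic polynomial satisfied by $y$ whose coefficients lie in the $F$-subalgebra of $K\times\cdots\times K$ generated by the tuples $(\Tr(p_1(y^j)),\ldots,\Tr(p_k(y^j)))$ for $j=1,\ldots,d$ with $d=\max_r d_r$. Choosing the Shirshov base length $m$ large enough that such powers $y^j$ lie again in $Y_{ss}$, or equivalently enlarging $R$ to contain these finitely many additional trace tuples, places these coefficients in $R$; the enlargement only adjoins finitely many generators so the resulting ring remains a finitely generated, hence Noetherian, commutative $F$-algebra.

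The main obstacle is precisely this last technical step: ensuring that the traces of powers needed for the Cayley--Hamilton relation of each $y\in Y_{ss}$ really sit in $R$. Once that is arranged, Lemma~\ref{finite-module} applies to $\hat{\mathcal{A}}_R$, showing it is a finite $R$-module; then the inclusion $\mathcal{A}_R\subseteq \hat{\mathcal{A}}_R$ together with the Noetherianity of $R$ completes the proof that $\mathcal{A}_R$ is a finite $R$-module.
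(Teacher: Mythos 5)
Your proof is correct and follows essentially the same route as the paper: nilpotency of $J_K$ handles $Y_J$, Cayley--Hamilton together with Newton's identities (characteristic zero) handles $Y_{ss}$ componentwise, Lemma \ref{finite-module} then gives finiteness of $R\cdot\hat{\mathcal{A}}$ over $R$, and Noetherianity of $R$ passes finiteness to the submodule $\mathcal{A}_R$. Your concern about the traces $\Tr\bigl(p_r(y^j)\bigr)$, $1\le j\le d$, is legitimate and the paper's $R$ should indeed be read as containing these finitely many trace tuples (your second fix, which keeps $R$ finitely generated and Noetherian); note, however, that your first suggested fix --- enlarging the Shirshov length $m$ so that the powers $y^j$ lie again in $Y_{ss}$ --- cannot work, since powers of words of length close to $m$ always have length exceeding $m$, so the two alternatives are not equivalent.
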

\begin{proof}
The Cayley-Hamilton theorem implies that for $a\in Y_{ss}$

\[
p_{r}(a^{d+1})+\sum_{i=0}^{d-1}q_{i}\left(Tr(p_{r}(a^{1})),\ldots,Tr(p_{r}(a^{d}))\right)p_{r}(a^{i+1})=0,
\]
where $q_{0},\ldots,q_{d-1}$ are polynomials on commutative
variables.

It follows that
\[
a^{d+1}+\sum_{i=0}^{d-1}q_{i}\left(Tr(a^{1}),\ldots,Tr(a^{d})\right)a^{i+1}=0
\]
proving that $Y_{ss}$, and hence also $Y$, is integral over $R$.

\begin{remark}
The exponent $d+1$ is needed since the generic algebra
$\mathcal{A}$ has no identity.

\end{remark}

Applying Lemma \ref{finite-module} we obtain that $R\cdot\hat{\mathcal{A}}$ is
a finite module over $R$. Since $R$ is Noetherian,
$R\cdot\hat{\mathcal{A}}$ is a Noetherian $R$-module, and hence
$\mathcal{A}_{R}\subseteq R\cdot\hat{\mathcal{A}}$ is a finite
module over $R$. This proves the theorem. \end{proof}
\begin{corollary}\label{Representability of relatively free modulo an ideal}
The algebra $\mathcal{A}_{R}$ is representable. Furthermore, if
$I$ is an ideal of $\mathcal{A}$ which is closed under
multiplications by elements of $R$, that is $I_{R}=RI=I$, then
$\mathcal{A}/I$ is representable. \end{corollary}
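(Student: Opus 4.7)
The plan is to derive both assertions directly from the preceding theorem together with Beidar's Theorem \ref{representable-over-commutative}. For the first claim, the preceding theorem provides that $\mathcal{A}_R = R \cdot \mathcal{A}$ is a finite module over $R$, and by construction $R$ is a unital commutative Noetherian $F$-algebra, so Beidar's theorem applies verbatim and yields that $\mathcal{A}_R$ is representable.

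For the second claim, the strategy is to realize $\mathcal{A}/I$ as a subalgebra of a quotient $\mathcal{A}_R/I$ and to show that this quotient is again representable. First I would verify that $I$, although defined a priori as an ideal of $\mathcal{A}$, is in fact an ideal of the larger algebra $\mathcal{A}_R$. This uses precisely the hypothesis $RI=I$: for $r\in R$, $a\in \mathcal{A}$ and $i\in I$ one has $(ra)i = r(ai) \in rI = I$ and symmetrically $i(ra) \in I$, and since $\mathcal{A}_R = R\cdot \mathcal{A}$, additive and multiplicative closure follow.

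Next, since $I$ is then in particular an $R$-submodule of the finite $R$-module $\mathcal{A}_R$, the quotient $\mathcal{A}_R/I$ is still a finite $R$-module, whence representable by Beidar's theorem. The inclusion $\mathcal{A}\hookrightarrow \mathcal{A}_R$ composed with the projection $\mathcal{A}_R\twoheadrightarrow \mathcal{A}_R/I$ has kernel $\mathcal{A}\cap I$, and this equals $I$ since $I\subseteq \mathcal{A}$. Thus $\mathcal{A}/I$ embeds into $\mathcal{A}_R/I$, and being a subalgebra of a representable algebra it is itself representable.

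The argument is essentially a bookkeeping exercise resting on the two preceding results, so I do not expect a genuine obstacle. The only point requiring care is the passage from $I$ being an ideal of $\mathcal{A}$ to $I$ being an ideal of $\mathcal{A}_R$, and this is exactly what the stability condition $I_R=RI=I$ in the statement is designed to ensure.
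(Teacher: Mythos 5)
Your proposal is correct and follows essentially the same route as the paper: apply Beidar's theorem to the finite $R$-module $\mathcal{A}_R$, use the hypothesis $RI=I$ to see that $I$ is already an ideal ($R$-submodule) of $\mathcal{A}_R$, and then embed $\mathcal{A}/I$ into the representable quotient $\mathcal{A}_R/I=\mathcal{A}_R/I_R$. The only difference is that you spell out explicitly the (routine) verification that $I$ is an ideal of $\mathcal{A}_R$ and that the kernel of $\mathcal{A}\rightarrow\mathcal{A}_R/I$ is $I$, which the paper leaves implicit.
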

\begin{proof}
Since $\mathcal{A}_{R}$ is a finite module over $R$, where $R$ is
unital, commutative and Noetherian, it is representable (by
Theorem \ref{representable-over-commutative}). For the same reason, if $I$ is any ideal of
$\mathcal{A}$, the quotient module $\mathcal{A}_{R}/I_{R}$ (being
finite) is representable. Now suppose $I_{R}=I$. Then we get
$\mathcal{A}/I\subseteq\mathcal{A}_{R}/I=\mathcal{A}_{R}/I_{R}$.
Since the later is representable, the result follows.
\end{proof}

We would like to apply the above results to the case where
$A=A_{1}\times\cdots\times A_{k}$ is a product of \emph{basic}
algebras but before that we need the following basic lemma.

\begin{lemma}\label{generation of evaluation ideal}
Let $F\langle X \rangle$ be the free algebra over $F$ where $X$ is
a countable set of variables. Let $W$ be any algebra and $S$ a set
of polynomials in $F\langle X \rangle$. Let $I = \langle S
\rangle$, the $T$-ideal generated by $S$. Denote by
$\mathcal{I}$ and $\mathcal{S}$ the sets of \textit{all}
evaluations on $W$ of polynomials of $I$ and $S$ respectively.
Then $\mathcal{I} = \langle \mathcal{S} \rangle$ (the ideal
generated by $\mathcal{S}$).

\end{lemma}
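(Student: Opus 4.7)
The plan is to prove the two inclusions $\langle\mathcal{S}\rangle\subseteq\mathcal{I}$ and $\mathcal{I}\subseteq\langle\mathcal{S}\rangle$ separately; both reduce to unwinding the definition of the $T$-ideal $I=\langle S\rangle$ versus the (ordinary two-sided) ideal $\langle\mathcal{S}\rangle$ of $W$.

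For the easy direction $\langle\mathcal{S}\rangle\subseteq\mathcal{I}$, I would observe that since $I$ is a two-sided ideal of $F\langle X\rangle$ containing $S$, for any $s(x_1,\ldots,x_n)\in S$ and any $w,w',w_1,\ldots,w_n\in W$, the element $w\cdot s(w_1,\ldots,w_n)\cdot w'$ is the evaluation at $x_j\mapsto w_j$, $y\mapsto w$, $y'\mapsto w'$ of the polynomial $y\cdot s(x_1,\ldots,x_n)\cdot y'\in I$; hence it lies in $\mathcal{I}$. Summing such generators gives all of $\langle\mathcal{S}\rangle\subseteq\mathcal{I}$.

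For the reverse inclusion $\mathcal{I}\subseteq\langle\mathcal{S}\rangle$, I would use the concrete description of $I=\langle S\rangle$: every element of $I$ can be written as a finite sum
$$
f(X)=\sum_i u_i(X)\cdot s_i\bigl(g_{i,1}(X),\ldots,g_{i,n_i}(X)\bigr)\cdot v_i(X),
$$
where $s_i\in S$ and $u_i,v_i,g_{i,j}\in F\langle X\rangle$ (with $u_i$ or $v_i$ possibly absent in the nonunital setting). The inner substitutions $x_k\mapsto g_{i,k}$ account for invariance of $I$ under endomorphisms of $F\langle X\rangle$, while the outer factors $u_i,v_i$ account for the two-sided ideal structure. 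Evaluating such an $f$ at $x_j\mapsto w_j\in W$, each term $s_i\bigl(g_{i,1}(\overline{w}),\ldots,g_{i,n_i}(\overline{w})\bigr)$ is a value of $s_i\in S$ at elements of $W$, hence lies in $\mathcal{S}$; multiplying by $u_i(\overline{w}),v_i(\overline{w})\in W$ and summing produces an element of $\langle\mathcal{S}\rangle$.

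I do not expect a serious obstacle: the lemma is a formal consequence of unwinding ``$T$-ideal generated by $S$'' in $F\langle X\rangle$ and matching it, term by term, with ``ideal generated by $\mathcal{S}$'' in $W$. The only point needing slight care is the nonunital case, where one must allow terms of the shapes $s_i(\cdots)$, $u_i\cdot s_i(\cdots)$, and $s_i(\cdots)\cdot v_i$ in the expansion above; these still evaluate into $\langle\mathcal{S}\rangle$ since, by definition, $\langle\mathcal{S}\rangle$ contains $\mathcal{S}$ together with all its one-sided and two-sided products by elements of $W$.
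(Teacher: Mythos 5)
Your proposal is essentially correct, but the containment $\mathcal{I}\subseteq\langle\mathcal{S}\rangle$ is argued by a genuinely different route than the paper's. You invoke the explicit normal form of the $T$-ideal: every $f\in I=\langle S\rangle$ is a finite sum $\sum_i u_i\, s_i(g_{i,1},\ldots,g_{i,n_i})\, v_i$ with $u_i,v_i,g_{i,j}\in F\langle X\rangle$ (outer factors possibly absent), and then you evaluate term by term. This works, but it carries the (standard, yet unstated) obligation to verify that the set of such sums is indeed the smallest $T$-ideal containing $S$. The paper instead bypasses any normal form via a universal-property argument: pass to $\overline{W}=W/\langle\mathcal{S}\rangle$, note that every $s\in S$ is an identity of $\overline{W}$, hence $I=\langle S\rangle\subseteq\Id(\overline{W})$ because $\Id(\overline{W})$ is a $T$-ideal containing $S$; therefore every evaluation of $I$ on $W$ lands in $\langle\mathcal{S}\rangle$. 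The quotient argument is shorter and avoids the combinatorial description altogether; your version is more explicit and makes visible where the substitutions and the two-sided multiplications enter.

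One point you gloss over in the easy direction $\langle\mathcal{S}\rangle\subseteq\mathcal{I}$: a general element of $\langle\mathcal{S}\rangle$ is a \emph{sum} of terms $w\,s(\vec{w})\,w'$ (and one-sided or bare variants), and $\mathcal{I}$ is defined merely as the \emph{set} of evaluations of polynomials of $I$, so closure of $\mathcal{I}$ under addition is not automatic. The fix is the one the paper spends half its proof on: since $X$ is countable and $I$ is a $T$-ideal, one may choose the variables of the polynomials witnessing the different summands to be pairwise disjoint (e.g.\ the sum $\sum_i w_i\, s_i(\vec{w}^{(i)})\, w_i'$ is the single evaluation of $\sum_i y_i\, s_i(\vec{x}^{(i)})\, y_i'\in I$ with all variables distinct across $i$), and similarly a fresh variable $x$ gives $uz$ as an evaluation of $xp_z$. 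Your ``summing such generators'' needs this sentence to be complete; with it, both directions of your argument go through.
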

\begin{proof}
We show first $\mathcal{I}$ is an ideal of $W$. Let $z,w \in
\mathcal{I}$ and let $p_{z}$ and $p_{w}$ be polynomials in
$I$ with evaluations $z$ and $w$ respectively. By the
$T$-property of $I$ we may change variables and so we
may assume $p_{z}$ and $p_{w}$ have disjoint sets of variables.
Then there is an evaluation of $p_{z} + p_{w}$ which is $z+w$.
Next, let $z \in \mathcal{I}$ and $u \in W$. If $p_{z} \in
I$ with value $z$, we may take a variable $x$ which is
not in $p_{z}$ and get $uz$ as an evaluation of $xp_{z}$.

Now, obviously $\mathcal{I} \supseteq \langle \mathcal{S}
\rangle$. For the converse, consider the algebra
$\overline{W}=W/\langle \mathcal{S} \rangle$. Clearly, elements of
$S$ are identities of $\overline{W}$ and hence $I = \langle S
\rangle \subseteq \Id(\overline{W})$. It follows that all
evaluations of $I$ on $W$ are contained in $\langle \mathcal{S}
\rangle$ as desired.
\end{proof}

\begin{proposition}\label{closed under multiplication}
Let $A$ be a product of basic algebras as above and let $I$ be the
$T$-ideal generated by \emph{some} Kemer polynomials of $A$.
Denote by $\hat{I}$ the ideal of $\mathcal{A}$ obtained by all
evaluation of the polynomials of $I$ on $\mathcal{A}$. Then
$\hat{I}$ is closed under multiplication of $R$.\end{proposition}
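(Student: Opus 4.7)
My plan is to reduce the statement to showing $c \cdot \hat{f} \in \hat{I}$ for each generator $c = (\Tr(p_1(u)),\ldots,\Tr(p_k(u)))$ of the unital $F$-algebra $R$ (with $u \in Y_{ss}$) and each evaluation $\hat{f}$ of a generating Kemer polynomial $f$ of $I$. By Lemma \ref{generation of evaluation ideal}, every element of $\hat{I}$ has the form $\sum_j \alpha_j \hat{f}_j \beta_j$ with $\alpha_j, \beta_j \in \mathcal{A}$ and $\hat{f}_j$ an evaluation of a generating Kemer polynomial. Because $R \subseteq K \times \cdots \times K$ sits centrally in $A_K$, once the core claim is established $c$ can be moved past $\alpha_j, \beta_j$; products of trace generators are handled by iteration, and sums by $F$-linearity.

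To prove the core claim I would first lift $u$ to $\tilde{u} \in \mathcal{A}$. Since $A_K^{ss}$ is a subalgebra of $A_K$ complementing the ideal $J_K$, the semisimple projection $\pi_{ss}$ is an algebra homomorphism, and $u$ was built as a product of semisimple projections of generators of $\hat{\mathcal{A}}$ coming from $\mathcal{A}$; so setting $\tilde{u}$ to be the corresponding product in $\mathcal{A}$ gives $\pi_{ss}(\tilde{u}) = u$. Next, fix one small alternating set $X = \{x_1,\ldots,x_d\}$ of $f$ (of size $d = d(A)$) and form
$$
g(x_1,\ldots,x_d,t,\vec{y}) \;=\; \sum_{i=1}^{d} f(x_1,\ldots,t x_i,\ldots,x_d,\vec{y}) \;\in\; \langle f \rangle \;\subseteq\; I.
$$
Evaluate $g$ on $\mathcal{A}$ by extending the evaluation that produced $\hat{f}$ with $t \mapsto \tilde{u}$. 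The resulting $\hat{g}$ lies in $\hat{I}$ by construction, and the goal is to show $\hat{g} = c \cdot \hat{f}$.

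For the componentwise verification, work inside each $(A_r)_K$. If $d(A_r) < d(A)$ or $n_{A_r} < n_A$, then $f$ is identically zero on $A_r$: the $\mu$ alternating sets of size $d(A)$ (with $\mu$ chosen large enough) or the $n_A - 1$ alternating sets of size $d(A)+1$ exceed what $(A_r^{ss})_K$ together with the nilpotency of $J_r$ can accommodate, so both sides vanish. Otherwise $d(A_r) = d$ and $n_{A_r} = n_A$, so $(A_r^{ss})_K$ has dimension exactly $d$. Decompose $\tilde{u}_r = u_r + j_r$ with $u_r \in (A_r^{ss})_K$ and $j_r \in (J_r)_K$, giving
$$
\hat{g}_r \;=\; \sum_{i=1}^d f_r(\xi_{1,r},\ldots,u_r \xi_{i,r},\ldots,\xi_{d,r},\vec{\eta}_r) \;+\; \sum_{i=1}^d f_r(\xi_{1,r},\ldots,j_r \xi_{i,r},\ldots,\xi_{d,r},\vec{\eta}_r).
$$
The second sum vanishes: inserting $j_r \xi_{i,r} \in (J_r)_K$ into the small-set slot $i$ adds one radical value to the $n_{A_r} - 1$ radicals that every nonzero evaluation of $f_r$ must already place in its big sets (each of size $d+1 > \dim (A_r^{ss})_K$, hence not able to be nonzero with purely semisimple values), reaching $J_r^{n_{A_r}} = 0$. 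In the first sum, expand each $\xi_{j,r}$ into its semisimple and radical components by multilinearity and discard, by the same radical-counting argument, every term carrying a radical in a small-set position; what survives is $\sum_i f_r(\xi_{1,r}^{ss},\ldots,u_r \xi_{i,r}^{ss},\ldots,\xi_{d,r}^{ss},\vec{\eta}_r)$, to which the standard derivation identity for alternating $d$-linear forms on the $d$-dimensional space $(A_r^{ss})_K$ applies, yielding $\Tr(p_r(u)) \cdot f_r(\xi_{1,r}^{ss},\ldots,\xi_{d,r}^{ss},\vec{\eta}_r) = c_r \hat{f}_r$ after the same semisimple reduction.

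The main obstacle is the bookkeeping in the previous paragraph: one must verify that, in every nonzero evaluation of $f_r$ on $A_r$ (for components where $f$ does not automatically vanish), the small sets carry only semisimple values while each big set carries at least one radical, so that \emph{any} additional radical substitution anywhere pushes the total past the nilpotency index of $J_r$. This uses the full Kemer structure of $f$ (both the many small sets and the precisely $n_A - 1$ big sets) together with nilpotency of $J_r$ and the dimension count $\dim (A_r^{ss})_K = d$, and it is precisely why working with Kemer polynomials, rather than arbitrary nonidentities, is essential for the construction of $I$.
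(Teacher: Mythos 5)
Your argument is correct and essentially reproduces the paper's proof: the same reduction via Lemma \ref{generation of evaluation ideal} and centrality of $R$, the same lift of $u\in Y_{ss}$ to an element of $\mathcal{A}$ with semisimple part $u$, the same radical-counting/alternation bookkeeping forcing semisimple values in the small sets (using $\dim_K(A_r^{ss})_K=d$ for the surviving components), and the same derivation identity producing $\Tr(p_r(u))$; packaging the right-hand side as one polynomial $g=\sum_i f(x_1,\ldots,tx_i,\ldots,x_d,\vec y)\in I$ is only a cosmetic reorganization of the paper's final step. One small caution: your case split should be stated in terms of the Kemer index of $A_r$ versus $(d,s)$ (i.e.\ read your ``$n_A-1$'' as $s$, the number of big sets), since the nilpotency index of $J(A)$ may exceed $s+1$ when it is attained at a component with $d_r<d$; as written a component with $d_r=d$, $n_{A_r}-1=s<n_A-1$ would be mislabeled as one where $f$ vanishes, though your argument for the surviving case applies to it verbatim.
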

\begin{proof}
We need to show that for any element $f\in I$, any evaluation
$\bar{f}\in\hat{I}$ (of $f$ on $\mathcal{A}$) and any $a_{0}\in
Y_{ss}$, we have $Tr(a_{0})\bar{f}\in\hat{I}$. Note that since the
ideal $\hat{I}$ is generated by all evaluations on $\mathcal{A}$
of all Kemer polynomials in $I$, invoking Lemma \ref{generation of evaluation ideal}, we may assume that $f$ is a Kemer
polynomial.

Let $(d,s)$ be the Kemer index of $A$. Recall that $d=max\{d_r\}$
where $d_{r}=dim_{F}(A_{r}^{ss})$ and if $\Psi=\{q:d_q=d\}$ then
$s=max_{q\in \Psi}\{n_{A_{q}}-1\}$, where $n_{A_{q}}$ is the
nilpotency index of $J_{q}$. Let
$f(Z,X_{1},\ldots,X_{\mu},V_{1},\ldots,V_{s},Q)$ be a
(multilinear) Kemer polynomial of $A$ where
$Z=\{z_{1},\ldots,z_{d}\},X_{1},\ldots,X_{\mu}$ are small sets,
$V_{1},\ldots,V_{s}$ are big sets (the designated variables) and
$Q$ is a set of additional variables. We assume $\mu$ is large
enough so that $f$ is an identity of any basic algebra $A_{r}$
whose Kemer index is strictly smaller than $(d,s)$ (note that since $f$ is a Kemer polynomial of $A$, it is a nonidentity of $A_q$ for some $q \in \Psi$ and $n_{A_q}=s$). Consider an
evaluation of $f$ (on $\mathcal{A}$) given by
$\hat{Z}=\{\hat{z}_{1},\ldots,\hat{z}_{d}\}$,
$\{\widehat{X}_{i}\}$, $\{\widehat{V}_{j}\}$, $\widehat{Q}$.

In view of the embedding
\[
\mathcal{A}\subseteq\hat{\mathcal{A}}\subseteq
A_{K}=\prod_{r=1}^{k}(A_{r})_{K}^{ss}\oplus (J_{r})_{K}
\]
each element
$\pi\in\widehat{Z}\cup(\cup_{i}\widehat{X}_{i})\cup(\cup_{j}\widehat{V}_{j})\cup\widehat{Q}$
can be written as $\pi=\pi^{ss}+\pi^{J}$ where $\pi^{ss}\in
A_{K}^{ss}$ and $\pi^{J}\in J_{K}$. Note that in general
$\pi^{ss}$ and $\pi^{J}$ are not elements of $\mathcal{A}$. Now,
viewing the above evaluation in $A_{K}$, we have ($t$ denotes the
degree of $f$)

\[
f(\pi_{1},\ldots,\pi_{t})=f\left(\pi_{1}^{ss}+\pi_{1}^{J},\ldots,\pi_{t}^{ss}+\pi_{t}^{J}\right)=\sum_{i=1}^{2^{t}}\bar{f}_{i}
\]
where
$\bar{f}_{i}=f(\pi_{1}^{\epsilon_{1}},\ldots,\pi_{t}^{\epsilon_{t}})$
and $\pi^{\epsilon_{j}}$ is either $\pi^{ss}$ or $\pi^{J}$.

Since $A_{K}$ has the same Kemer index as $A$, the evaluation
$\bar{f}_{i}$ (on $A_{K}$) vanishes unless all small sets, $Z$,
$\{X_{i}\}_{i}$ get semisimple values (and precisely one variable
of each big set $\{V_{1},\ldots,V_{s}\}$ gets a radical value). In
particular we have

\[
\bar{f}=f(\widehat{Z},\{\widehat{X}_{i}\},\{\widehat{V}_{j}\},\widehat{Q})=\sum_{r=1}^{k}f(\hat{z}_{1}^{ss}\ldots,\hat{z}_{d}^{ss},\{\widehat{X}_{i}\},\{\widehat{V}_{j}\},\widehat{Q}),
\]
where $\hat{z}_{i}=\hat{z}_{i}^{ss}+\hat{z}_{i}^{J}$. Our task is
then to show
$Tr(a_{0})f(\hat{z}_{1}^{ss}\ldots,\hat{z}_{d}^{ss},\{\widehat{X}_{i}\},\{\widehat{V}_{j}\},\widehat{Q})\in\hat{I}.$
To this end let us simplify the notation. We let
$a_{1}=\hat{z}_{1}^{ss},\ldots,a_{d}=\hat{z}_{d}^{ss}$,
$b_{1}=\hat{z}_{1}^{J},\ldots,b_{d}=\hat{z}_{d}^{J}$ (that is
$\hat{z}_{i}=a_{i}+b_{i}$, $i=1,\ldots,d$) and since the tuples
$\{\widehat{X}_{i}\},\{\widehat{V}_{j}\},\widehat{Q}$ will not
play any role in the proof we denote them by $\Lambda$. Thus
$\bar{f}=f(a_{1},\ldots,a_{d},\Lambda)$.

Claim: for any $r$ the following holds:
\[
Tr(a_{0}^{(r)})f(a_{1}^{(r)},\ldots,a_{d}^{(r)},\Lambda^{(r)})=\sum_{i=1}^{d}f(a_{1}^{(r)},\ldots,a_{i-1}^{(r)},a_{0}^{(r)}a_{i}^{(r)},a_{i+1}^{(r)},\ldots,a_{d}^{(r)},\Lambda^{(r)}),
\]
where
$a_{i}=(a_{i}^{(1)},\ldots,a_{i}^{(k)})=(p_{1}(a_{i}),\ldots,p_{k}(a_{i}))$
and $\Lambda^{(r)}=p_{r}(\Lambda)$.

Since the variables $z_{1},\ldots,z_{d}$ alternate in $f$, the
value $f(a_{1}^{(r)},\ldots,a_{d}^{(r)},\Lambda^{(r)})$ is zero
unless the elements $a_{1}^{(r)},\ldots,a_{d}^{(r)}$ are linearly
independent over $K$. On the other hand, since $A_{r}$ is basic,
$d_{r}=dim_{F}(A_{r}^{ss})=dim_{K}((A_{r})_{K}^{ss}\leq d$ and so
$f(a_{1}^{(r)},\ldots,a_{d}^{(r)},\Lambda^{(r)})\neq 0$ only if
$d_{r}=d$ and the set $\{a_{1}^{(r)},\ldots,a_{d}^{(r)}\}$ is a
basis of $(A_{r})_{K}^{ss}$ over $K$.

Write $a_{0}^{(r)}a_{i}^{(r)}=\sum_{j=1}^{d}\gamma_{i,j}a_{j}$
where $\gamma_{i,j}\in K$.

We compute:
\[
\sum_{i=1}^{d}f(a_{1}^{(r)},\ldots,a_{i-1}^{(r)},a_{0}^{(r)}a_{i}^{(r)},a_{i+1}^{(r)},\ldots,a_{d}^{(r)},\Lambda^{(r)})=\sum_{i=1}^{d}\sum_{j=1}^{d}\gamma_{i,j}f(a_{1}^{(r)},\ldots,a_{i-1}^{(r)},a_{j}^{(r)},a_{i+1}^{(r)},\ldots,a_{d}^{(r)},\Lambda^{(r)}).
\]
 As $f(a_{1}^{(r)},\ldots,a_{i-1}^{(r)},a_{j}^{(r)},a_{i+1}^{(r)},\ldots,a_{d}^{(r)},\Lambda^{(r)})$
is zero when $a_{j}^{(r)}\neq a_{i}^{(r)}$, we obtain
\[
\sum_{i=1}^{d}\gamma_{i,i}f(a_{1}^{(r)},\ldots,a_{i-1}^{(r)},a_{i}^{(r)},a_{i+1}^{(r)},\ldots,a_{d}^{(r)},\Lambda^{(r)})=Tr(a_{0}^{(r)})f(a_{1}^{(r)},\ldots,a_{d}^{(r)},\Lambda^{(r)})
\]
proving the claim.

Having established the claim for $r=1,\ldots,k$ we conclude

\[
Tr(a_{0})\cdot
f(a_{1},\ldots,a_{d},\Lambda)=\sum_{i=1}^{d}f(a_{1},\ldots,a_{i-1},a_{0}a_{i},a_{i+1},\ldots,a_{d},\Lambda).
\]

It order to complete the proof we show that
$f(a_{1},\ldots,a_{i-1},a_{0}a_{i},a_{i+1},\ldots,a_{d},\Lambda)\in\hat{I}$,
$i=1,\ldots,d$. The argument is similar to the one above. Recall
that $a_{0}\in Y_{ss}\subseteq\hat{\mathcal{A}}$. Furthermore, by
the construction of $\hat{\mathcal{A}}$, there is an element
$\hat{z}_{0}\in\mathcal{A}$ such that $\hat{z}_{0}=a_{0}+b_{0}$,
where $b_{0}\in J_{K}$. This implies
\[
f(a_{1},\ldots,a_{i-1},a_{0}a_{i},a_{i+1},\ldots,a_{d},\Lambda)=f(\hat{z}_{1},\ldots,\hat{z}_{i-1},\hat{z}_{0}\hat{z}_{i},\hat{z}_{i+1},\ldots,\hat{z}_{d},\Lambda)
\]
 which is clearly in $\widehat{I}$. The proposition is now proved.\end{proof}

Proposition \ref{closed under multiplication} and Corollary \ref{Representability of relatively free modulo an ideal} yield

\begin{corollary} \label{representability of a relatively free algebra modulo the T-ideal of Kemer polynomials}
Let $A$ be a finite dimensional $F$-algebra and let $\mathcal{A}$
be an affine relatively free algebra. Let $I$ be a $T$-ideal
generated by some Kemer polynomials of $A$ and let $\hat{I}$ be
the ideal of $\mathcal{A}$ consisting of all evaluations of $I$ on
$\mathcal{A}$. Then $\mathcal{A}/\hat{I}$ is representable.

\end{corollary}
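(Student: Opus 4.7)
The plan is to reduce to the situation of Proposition \ref{closed under multiplication} and then invoke Corollary \ref{Representability of relatively free modulo an ideal}. The first step is to replace $A$ by a PI-equivalent algebra $A'=A_{1}\times\cdots\times A_{k}$ in which every $A_{i}$ is basic. Such a decomposition exists by iterated application of the definition of basic: whenever a factor $B$ in the current product is not basic, it is PI-equivalent to a finite product of finite dimensional algebras each having $Par$ strictly smaller than $Par(B)$. Since $Par$ takes values in $\mathbb{Z}^{\geq 0}\times\mathbb{Z}^{\geq 0}$ with the (well-founded) lexicographic order, the recursion terminates after finitely many steps and yields the required $A'$ (Corollary \ref{product of Full algebras} may be used to handle the fullness reduction as a warm-up).

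Because $\Id(A)=\Id(A')$, the affine relatively free algebra $\mathcal{A}$ is the same for $A$ and for $A'$, and the notion of Kemer polynomial, which depends only on the Kemer index of the $T$-ideal, coincides for the two algebras. In particular the given $T$-ideal $I$ is generated by Kemer polynomials of $A'$ as well, and the ideal $\hat{I}\subseteq\mathcal{A}$ of evaluations of $I$ on $\mathcal{A}$ is unchanged. We then apply the trace-ring construction of Section \ref{The-trace-ring} to the product decomposition $A'=A_{1}\times\cdots\times A_{k}$, obtaining a commutative, unital, Noetherian $F$-subalgebra $R$ of $K\times\cdots\times K$ such that $\mathcal{A}_{R}=R\cdot\mathcal{A}$ is a finite $R$-module.

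By Proposition \ref{closed under multiplication}, applied to the basic product $A'$, the ideal $\hat{I}$ is closed under multiplication by $R$, i.e.\ $R\hat{I}=\hat{I}$. Hence Corollary \ref{Representability of relatively free modulo an ideal} applies verbatim and gives that $\mathcal{A}/\hat{I}$ is representable, as required. The only step that needs genuine attention is the opening bookkeeping, namely verifying that the passage from $A$ to $A'$ preserves every piece of data in the statement (the relatively free algebra $\mathcal{A}$, the set of Kemer polynomials, and therefore the ideal $\hat{I}$); once this is granted, the corollary is a direct assembly of the two previously established results and presents no further obstacle.
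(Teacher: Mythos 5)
Your proposal is correct and follows essentially the same route as the paper, which obtains this corollary directly by combining Proposition \ref{closed under multiplication} with Corollary \ref{Representability of relatively free modulo an ideal}; your only addition is to spell out the (implicit in the paper) reduction of an arbitrary finite dimensional algebra to a PI-equivalent product of basic algebras and to note that $\Id(A)$, hence $\mathcal{A}$, the Kemer polynomials, and $\hat{I}$ are unchanged under this replacement. That bookkeeping is exactly what the paper takes for granted, so there is nothing further to add.
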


Let us explain how the last result fits in our plan for proving
Kemer's theorem. We have started the proof by showing the existence of a finite
dimensional $F$-algebra $A$ whose ideal of identities is contained
in $\Id(W)$. Our goal is to replace the algebra $A$ by a
representable algebra $A'$, with $\Id(A)\subseteq \Id(A')\subseteq
\Gamma$, which has the same Kemer index as  $\Gamma$, and moreover
shares with $\Gamma$ the same Kemer polynomials. This may be
considered as an ``approximation'' (i.e. not necessarily PI
equivalence) of $\Gamma$ by a representable algebra. Let us sketch
briefly the construction of $A'$. We take the $T$-ideal $I$
generated by Kemer polynomials of $A$ which are contained in
$\Gamma$. If $A$ and $\Gamma$ have different Kemer indices, then
all Kemer polynomials of $A$ are contained in $\Gamma$ and hence
the Kemer index of $\mathcal{A}/\hat{I}$ is strictly smaller than
the Kemer index of $A$. A finite number of such steps yield a
representable algebra $A_{1}$ with the same Kemer index as
$\Gamma$ and $\Id(A_{1}) \subseteq \Gamma$. Once we have reached
the Kemer index of $\Gamma$ (from above in the lexicographic
ordering), we consider the $T$-ideal $I$ generated by all Kemer
polynomials of $A_{1}$ which are contained in $\Gamma$. As above,
we let $\hat{I}$ be ideal of $\mathcal{A}_{1}$ (an affine
relatively free algebra of $A_{1}$) consisting of all evaluations
of $I$ on $\mathcal{A}_{1}$ and conclude our construction by
putting $A'= \mathcal{A}_{1}/\hat{I}$. Before we present the
details of the proof, let us explain why we insist in modding out
ideals of a relatively free algebra $\mathcal{A}$ (and not of $A$
for instance).

It is clear that if $B$ is any algebra and $I$ is a $T$-ideal,
modding out from $B$ the ideal $\hat{I}$ consisting of all
evaluations of $I$ on $B$, yields an algebra whose $T$-ideal of
identities contains $I$. However, in general, we don't know
whether other polynomials ``become'' identities. For instance if
$B=M_{n}(F)$, then taking any $T$-deal $I\nsubseteq \Id(B)$ gives
$B/\hat{I}=0$. A key property of the relatively free algebra
$\mathcal{A}$ is that $\Id(\mathcal{A}/\hat{I})=\Id(A)+I$ (see
Lemma \ref{useful lemma}).

\end{section}

\begin{section}{$\Gamma$-Phoenix property}

Suppose $\Gamma$ is a $T$-ideal containing a Capelli polynomial.
We know that this is equivalent to saying that $\Gamma$ is a
$T$-ideal of an affine PI algebra and also equivalent to $\Gamma$
containing the $T$-ideal of a finite dimensional algebra $A$. If
we denote by $p_{\Gamma}$ and $p_{A}$ the Kemer index of $\Gamma$
and $A$ respectively, then $p_{\Gamma}\leq p_{A}$. Our goal in
this section is to show that it is possible to replace $A$ by
another finite dimensional algebra $B$, with $\Id(A) \subseteq
\Id(B) \subseteq \Gamma$, which is ``closer'' to $\Gamma$ in the
sense that its Kemer index and Kemer polynomials are exactly as
those of $\Gamma$. This will allow us to deduce the Phoenix
property for Kemer polynomials of $\Gamma$ from (the already
established) Phoenix property for Kemer polynomials of basic
algebras (see Corollary \ref{Kemer polynomials of A are Phoenix}).

Let us recall our notation once again. Let $A$ be a finite
dimensional algebra which is a direct product of basic algebras
$A_{1}\times\cdots\times A_{s}$. Let $p_{A}$ and $p_{i}$ denote
the Kemer index of $A$ and $A_{i}$, $i=1,\ldots,s$ respectively.
We let $\mu_{i}$ the minimal number of small sets in its Kemer
polynomials. Finally, write $\mu_{0}$ for the maximum of
$\{\mu_{1},\ldots,\mu_{s}\}$. $\mathcal{A}$ denotes an affine relatively free algebra that corresponds to $A$.

In the next proposition $\Id(A) \subseteq \Gamma=\Id(W)$ where $W$
is an affine PI algebra over $F$ and $A$ is a finite dimensional
algebra over $F$.

\begin{proposition}\label{pre-phoenix}
There exists a representable algebra $B$ with the following
properties:
\begin{enumerate}
\item $\Id(B)\subseteq\Gamma$.
\item The Kemer index $p_{B}$ of $B$ coincides with $p_{\Gamma}$.
\item $\Gamma$ and $B$ have the same Kemer polynomials corresponding to
every $\mu$ which is $\geq\mu_{0}$.
\end{enumerate}
\end{proposition}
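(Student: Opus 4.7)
The plan is to construct $B$ by an iterated quotient of relatively free algebras by $T$-ideals generated by Kemer polynomials, repeatedly invoking Corollary \ref{representability of a relatively free algebra modulo the T-ideal of Kemer polynomials}. Using the decomposition results for finite dimensional algebras, we may assume $A = A_1 \times \cdots \times A_s$ is a product of basic algebras.

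First I would carry out a \emph{descent on the Kemer index}. Define representable algebras $A^{(0)} = A, A^{(1)}, A^{(2)}, \ldots$ with $\Id(A^{(i)}) \subseteq \Gamma$ as follows. Suppose $p_{A^{(i)}} > p_\Gamma$, and write $p_{A^{(i)}} = (d', s')$, $p_\Gamma = (d, s)$. Either $d' > d$, in which case every non-identity of $\Gamma$ has only boundedly many alternating sets of cardinality $d'$, so any Kemer polynomial of $A^{(i)}$ (which by definition has arbitrarily many such sets) lies in $\Gamma$; or $d' = d$ and $s' > s$, in which case any polynomial with enough alternating sets of size $d$ and $s'>s$ alternating sets of size $d+1$ is in $\Gamma$ by the definition of $s=s_\Gamma$. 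Let $I_i$ be the $T$-ideal generated by the Kemer polynomials of $A^{(i)}$; then $I_i \subseteq \Gamma$. Set $A^{(i+1)} := \mathcal{A}^{(i)}/\hat{I}_i$, where $\mathcal{A}^{(i)}$ is an affine relatively free algebra of $A^{(i)}$. By Corollary \ref{representability of a relatively free algebra modulo the T-ideal of Kemer polynomials} it is representable, by Lemma \ref{useful lemma} it satisfies $\Id(A^{(i+1)}) = \Id(A^{(i)}) + I_i \subseteq \Gamma$, and since all Kemer polynomials of $A^{(i)}$ are now identities, $p_{A^{(i+1)}} < p_{A^{(i)}}$ strictly. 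Since lex order on $\Omega$ is well-founded, this terminates at some $A' := A^{(k)}$ with $p_{A'} = p_\Gamma$.

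Next I would perform one final quotient to match Kemer polynomials. Let $I$ be the $T$-ideal generated by all Kemer polynomials of $A'$ that lie in $\Gamma$, and set $B := \mathcal{A}'/\hat{I}$. The same corollary yields representability of $B$, and Lemma \ref{useful lemma} gives $\Id(B) = \Id(A') + I \subseteq \Gamma$, proving (1). For (2), the inclusions $\Id(A') \subseteq \Id(B) \subseteq \Gamma$ together with Lemma \ref{comparison} yield $p_\Gamma \leq p_B \leq p_{A'} = p_\Gamma$. For (3), fix $\mu_0$ large enough to serve as the Kemer threshold for $A'$, and let $f$ be a Kemer polynomial of $\Id(B)$ with at least $\mu_0$ small sets. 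Since $p_B = p_{A'}$ and $f \notin \Id(B) \supseteq \Id(A')$, the polynomial $f$ is a Kemer polynomial of $A'$; if $f$ were in $\Gamma$, then $f \in I \subseteq \Id(B)$ by construction, contradicting $f \notin \Id(B)$. Hence $f \notin \Gamma$, establishing (3).

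The main obstacle I anticipate lies in the descent iteration: Corollary \ref{representability of a relatively free algebra modulo the T-ideal of Kemer polynomials} is stated for $A$ finite dimensional, while the intermediate algebras $A^{(i)}$ for $i \geq 1$ are only representable. The natural fix is to pass at each step to a PI-equivalent finite-dimensional algebra over a suitable field extension (for instance, the $L$-subalgebra of $M_n(L)$ generated by $A^{(i)}$ in a chosen faithful representation), which preserves multilinear $F$-identities, so that the trace-ring machinery of Section \ref{The-trace-ring} applies uniformly throughout the iteration. A secondary technicality is that the Kemer threshold may grow from step to step, but since the descent has finitely many steps, a uniform $\mu_0$ serving every intermediate algebra exists, and this is the $\mu_0$ that one should use in condition (3).
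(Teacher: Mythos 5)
Your overall route is the same as the paper's: an iterated quotient $\mathcal{A}^{(i)}/\hat{I}_i$ of affine relatively free algebras by $T$-ideals of Kemer polynomials, using Corollary \ref{representability of a relatively free algebra modulo the T-ideal of Kemer polynomials} together with Lemma \ref{useful lemma} to descend the Kemer index, then one final quotient by the Kemer polynomials of $A'$ lying in $\Gamma$; your fix for the fact that the intermediate algebras are only representable (extension of scalars to make them finite dimensional, legitimate because $T$-ideals are stable under field extensions) is exactly how the paper handles it. Your treatment of properties (2) and (3) also matches the paper's.

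There is, however, one genuine gap in the descent step. You take $I_i$ to be the $T$-ideal generated by \emph{all} Kemer polynomials of $A^{(i)}$ and assert $I_i\subseteq\Gamma$, justifying this in the case $d'>d$ by the parenthetical that a Kemer polynomial ``by definition has arbitrarily many'' alternating sets of cardinality $d'$. That is not the definition: an individual Kemer polynomial of $A^{(i)}$ has some fixed number $\geq \mu_{A^{(i)}}$ of small sets, and $\mu_{A^{(i)}}$ may well be smaller than the stabilization point $\mu_\Gamma$ of the sequences $d_\nu,s_\nu$ for $\Gamma$. For $\nu<\mu_\Gamma$ one can have $d_\nu(\Gamma)>d_\Gamma$ (respectively $s_\nu(\Gamma)>s_\Gamma$), so a Kemer polynomial of $A^{(i)}$ with only $\mu_{A^{(i)}}$ small sets need not lie in $\Gamma$ even though $p_{A^{(i)}}>p_\Gamma$; your own phrasing in the case $d'=d$, $s'>s$ (``with enough alternating sets of size $d$'') implicitly concedes this. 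Since $\Id(A^{(i+1)})=\Id(A^{(i)})+I_i$, the containment in $\Gamma$ — i.e. property (1) — is exactly what is at stake. The repair is what the paper does: first prove (by contradiction with the definition of $p_\Gamma$) that there exists a threshold $\mu_0$ such that every Kemer polynomial of $A^{(i)}$ with at least $\mu_0$ small sets lies in $\Gamma$, and generate $I_i$ only by those; the strict drop $p_{A^{(i+1)}}<p_{A^{(i)}}$ still follows, because a Kemer polynomial of $A^{(i+1)}$ with at least $\mu_0$ small sets would be a nonidentity of $A^{(i)}$ of the same shape, hence in $I_i$, hence an identity of $A^{(i+1)}$, a contradiction. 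With that modification your argument is correct and coincides with the paper's proof.
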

\begin{corollary}
Extending scalars to a larger field we may assume the algebra $B$
is finite dimensional over $F$.
\end{corollary}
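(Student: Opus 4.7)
The plan is to use representability to pass from $B$ to a finite dimensional algebra over a field extension, and then invoke the scalar-extension stability of $T$-ideals in characteristic zero to transfer properties (1)--(3) to the new setting.

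First I would use the definition: since $B$ is representable over $F$, there is a field extension $L/F$ and an integer $n$ with $B\hookrightarrow M_{n}(L)$. Inside $M_{n}(L)$, let $\widetilde{B}$ be the $L$-subalgebra generated by $B$. Because $B$ is already closed under multiplication, $\widetilde{B}$ is nothing but the $L$-linear span of $B$ in $M_{n}(L)$, and hence it is finite dimensional over $L$ (of $L$-dimension at most $n^{2}$). The natural candidate is to replace the base field $F$ by $L$ and the algebra $B$ by $\widetilde{B}$.

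Next I would verify the $F$-identities are unchanged: $\Id_{F}(\widetilde{B})=\Id_{F}(B)$. The inclusion $\Id_{F}(\widetilde{B})\subseteq \Id_{F}(B)$ is immediate since $B\subseteq \widetilde{B}$. For the opposite inclusion recall that, in characteristic zero, every $T$-ideal is generated by multilinear polynomials (the remark following Theorem \ref{affine PI satisfies a Capelli}). If $f(x_{1},\ldots,x_{r})$ is a multilinear identity of $B$ and $\widetilde{b}_{1},\ldots,\widetilde{b}_{r}\in \widetilde{B}$, then each $\widetilde{b}_{i}$ is an $L$-linear combination of elements of $B$, and multilinearity of $f$ expands $f(\widetilde{b}_{1},\ldots,\widetilde{b}_{r})$ into an $L$-linear combination of values of $f$ on tuples in $B$, each of which vanishes. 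Extending scalars now gives $\Id_{L}(\widetilde{B})=\Id_{F}(\widetilde{B})\otimes_{F}L=\Id_{F}(B)\otimes_{F}L\subseteq \Gamma\otimes_{F}L=:\Gamma_{L}$, which is item (1) over $L$.

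Finally I would argue that the Kemer invariants pass to the extension. Because all of the defining notions (alternating sets, small/big sets, Kemer index, Kemer polynomial) are phrased purely in terms of membership of multilinear polynomials in the $T$-ideal, and a multilinear polynomial $f\in F\langle X\rangle$ lies in $\Gamma$ iff, viewed in $L\langle X\rangle$, it lies in $\Gamma_{L}$, one gets that the Kemer index of $\Gamma_{L}$ (resp.\ $\widetilde{B}$) equals that of $\Gamma$ (resp.\ $B$), and the Kemer polynomials of $\Gamma_{L}$ with at least $\mu\geq \mu_{0}$ small sets are exactly those of $\Gamma$, which by hypothesis are nonidentities of $B$ and therefore of $\widetilde{B}$. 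Thus items (2) and (3) of Proposition \ref{pre-phoenix} hold for $\widetilde{B}$ with respect to $\Gamma_{L}$ over $L$, and renaming $L$ as the new ground field completes the proof. The only nontrivial point is the scalar-extension compatibility of the Kemer data, but this is an immediate consequence of the multilinear generation of $T$-ideals in characteristic zero.
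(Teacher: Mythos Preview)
Your proposal is correct and aligns with what the paper intends; the paper itself offers no explicit proof of this corollary, treating it as an immediate consequence of the stability of $T$-ideals under field extensions (see the Remark following Theorem~\ref{affine PI satisfies a Capelli}). Your argument simply unpacks that remark: representability gives $B\hookrightarrow M_{n}(L)$, the $L$-span $\widetilde{B}$ is finite dimensional over $L$, and multilinear generation of $T$-ideals in characteristic zero ensures $\Id_{F}(\widetilde{B})=\Id_{F}(B)$ and that all Kemer data transfer intact after extending scalars.
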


\begin{proof}(of proposition)

Our first goal is to construct a representable algebra $B$ with
$\Id(B) \subseteq \Gamma$ and $p_{B}=p_{\Gamma}$. To this end, we
assume $p_{A} > p_{\Gamma}$. It follows that there exists
$\mu_{0}$, such that any Kemer polynomial of $A$ with at least
$\mu_{0}$ small sets is in $\Gamma$ (indeed, if there is no such
$\mu_{0}$, then for any $\mu_{0}$, there is a Kemer polynomial of
$A$ with $\mu_{0}$ small sets of cardinality $d$ and $s$ big sets
of cardinality $d+1$ (where $p_{A}= (d,s)$) which is not in
$\Gamma$. This says, by definition of the Kemer index, that $(d,s)
\leq p_{\Gamma}$, contrary to our assumption). Let $I$ be the
$T$-ideal generated by all Kemer polynomials of $A$ with at least
$\mu_{0}$ small sets and let $\hat{I}$ be the ideal of
$\mathcal{A}$ consisting of all evaluations of polynomials in $I$
on $\mathcal{A}$. Due to Corollary \ref{representability of a
relatively free algebra modulo the T-ideal of Kemer polynomials}
and Lemma \ref{useful lemma} we know $\mathcal{A}/\hat{I}$ is
representable and $\Id(\mathcal{A}/\hat{I})=\Id(A) + I \subseteq
\Gamma$. Let us show that $p_{\mathcal{A}/\hat{I}} < p_{A}$.
Clearly, $p_{\mathcal{A}/\hat{I}} \leq p_{\mathcal{A}}=p_{A}$.
Suppose $p_{\mathcal{A}/\hat{I}} = p_{A}$ and let $f$ be a Kemer
polynomial of $\mathcal{A}/\hat{I}$ with at least $\mu_{0}$ small
sets. Since $\Id(\mathcal{A}/\hat{I}) \supseteq \Id(A)$, the
polynomial $f$ is a nonidentity of $A$. It follows that $f$ is a
Kemer polynomial of $A$ and hence is in $I$. We obtain that $f \in
\Id(\mathcal{A}/\hat{I})$ contradicting our assumption on $f$. It
is clear that repeating the process above (a finite number of
times) we obtain a representable algebra $B$ with $\Id(B)
\subseteq \Gamma$ and $p_{B}=p_{\Gamma}$.

In order to complete the proof of the proposition let us assume we
have a finite dimensional algebra $A$ with $\Id(A) \subseteq
\Gamma$ and $p_{A}=p_{\Gamma}$. We need to construct a
representable algebra $B$ with $\Id(A) \subseteq \Id(B) \subseteq
\Gamma$ (and hence $p_{B}=p_{\Gamma})$ such that $B$ and $\Gamma$
have the same Kemer polynomials (with at least $\mu_{0}$ small
sets). Let $I$ be the $T$-ideal generated by all Kemer polynomials
of $A$ which are contained in $\Gamma$ and let $\hat{I}$ the
corresponding ideal of $\mathcal{A}$. Note that in this final
step, it is necessarily not true that all Kemer polynomials of $A$
are contained in $\Gamma$ since any Kemer polynomial of $\Gamma$
is a Kemer polynomial of $A$. Consider the algebra
$B=\mathcal{A}/\hat{I}$. We know $B$ is representable and has the
same Kemer polynomials as $\Gamma$.
\end{proof}

\begin{theorem}$(${$\Gamma$-Phoenix property of Kemer polynomials}$)$ \label{Phoenix-property}

Let $\Gamma$ be a $T$-ideal as above and let $f$ be a Kemer polynomial of $\Gamma$.
Then it satisfies the $\Gamma$-Phoenix property.
\end{theorem}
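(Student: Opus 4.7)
The plan is to bootstrap the already-established Phoenix property for basic algebras (Corollary~\ref{Kemer polynomials of A are Phoenix}) up to the $T$-ideal $\Gamma$. The preparation is done by Proposition~\ref{pre-phoenix} and its corollary: I would first replace the starting finite dimensional algebra by a finite dimensional algebra $B$ (after extending scalars if necessary) satisfying $\Id(B)\subseteq \Gamma$, $\Ind(B)=\Ind(\Gamma)=(d,s)$, and sharing its Kemer polynomials with $\Gamma$ whenever the number of small sets is at least some threshold $\mu_{0}$. Iterating the definition of ``basic''—a non-basic algebra splits, up to PI-equivalence, into a product of algebras of strictly smaller $\Par$, and $\Par$ takes values in the well-ordered set $\mathbb{Z}^{\ge 0}\times\mathbb{Z}^{\ge 0}$—I would then write $B$ (still up to PI-equivalence) as a product $B_{1}\times\cdots\times B_{r}$ of basic algebras. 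Note that $\Id(B)=\bigcap_{j}\Id(B_{j})$ and $\Ind(B_{j})\leq\Ind(B)$ for every $j$ by Lemma~\ref{comparison}.

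Now take a Kemer polynomial $f$ of $\Gamma$ (which, after enlarging $\mu_{0}$ if necessary, we may assume has at least $\mu_{0}$ small sets) and an arbitrary multilinear $f'\in\langle f\rangle\setminus\Gamma$. Since $\Id(B)\subseteq\Gamma$ and $f'\notin\Gamma$, there is some index $j$ with $f'\notin\Id(B_{j})$. The crucial observation, on which the whole reduction hinges, is that $\Id(B_{j})$ is itself a $T$-ideal: hence $f\in\Id(B_{j})$ would force $\langle f\rangle\subseteq\Id(B_{j})$ and thus $f'\in\Id(B_{j})$. Consequently $f\notin\Id(B_{j})$ for free, and both $f$ and $f'$ are nonidentities of the \emph{same} basic algebra $B_{j}$.

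I then need to verify $\Ind(B_{j})=(d,s)$. If not, then $\Par(B_{j})=\Ind(B_{j})<(d,s)$ lexicographically; but then $f$, which has many disjoint alternating sets of cardinality $d$ and exactly $s$ alternating sets of cardinality $d+1$, would lie in $\Id(B_{j})$ by Lemma~\ref{polynomials with many alternating sets are identities} (choosing $\mu_{0}$ large enough to dominate the nilpotency indices of all the $B_{j}$), contradicting $f\notin\Id(B_{j})$. So $\Ind(B_{j})=(d,s)$ and $f$ is a Kemer polynomial of the basic algebra $B_{j}$. Applying Corollary~\ref{Kemer polynomials of A are Phoenix} to the pair $(f,f')$ produces a multilinear $f''\in\langle f'\rangle$ which is Kemer of $B_{j}$, and by tracing through the construction in Kemer's Lemma~\ref{Kemer's Lemma 2} we may arrange $f''$ to have at least $\mu_{0}$ small sets.

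Finally I would conclude that $f''$ is Kemer of $\Gamma$: by construction $f''$ has the alternating structure dictated by the index $(d,s)=\Ind(\Gamma)$, and $f''\notin\Id(B_{j})$ gives $f''\notin\Id(B)$, so $f''$ is a Kemer polynomial of $\Id(B)$ with at least $\mu_{0}$ small sets. The ``same Kemer polynomials'' property for the pair $(\Id(B),\Gamma)$ then forces $f''\notin\Gamma$, completing the proof. The only real subtlety, rather than a calculational obstacle, is the bookkeeping over the basic factors $B_{j}$; that subtlety is entirely dissolved by the $T$-ideal observation in the second paragraph, after which everything follows from the machinery already assembled.
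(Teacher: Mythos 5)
Your proposal is correct and follows essentially the same route as the paper's proof: apply Proposition \ref{pre-phoenix} to replace $\Gamma$ by a finite dimensional algebra with the same index and Kemer polynomials, decompose it into basic factors, use the $T$-ideal observation to find one basic factor on which both $f$ and $f'$ are nonidentities, invoke Corollary \ref{Kemer polynomials of A are Phoenix}, and transfer the resulting Kemer polynomial back to $\Gamma$ via the ``same Kemer polynomials'' property. The additional verifications you spell out (that $\Ind(B_{j})=(d,s)$ via Lemma \ref{polynomials with many alternating sets are identities}, and the bookkeeping on $\mu_{0}$) are points the paper states more tersely, not a different argument.
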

\begin{proof}
Let $\langle f \rangle$ be the $T$-ideal generated by $f$ and let
$h \in \langle f \rangle$ be a polynomial not in $\Gamma$. We need
to show there is $f' \in \langle h \rangle$ which is Kemer of
$\Gamma$. By the proposition there is a finite dimensional algebra
$A$ with $\Id(A) \subseteq \Gamma$ whose Kemer polynomials are
precisely those of $\Gamma$. Hence $f$ is a Kemer polynomial of
$A$ and assuming (as we may) that $A=A_{1}\times \cdots \times A_{s}$ where
$A_{i}$ are basic, the polynomial $f$ is Kemer of $A_{i}$ for some
$i$. But more than that, $f$ is Kemer with respect to each basic
algebra $A_{j}$ as long as $f \notin \Id(A_{j})$. Note that for any
such $j$, $p_{j}=p_{A}=p_{\Gamma}$. Now the polynomial $h$ is not
in $\Gamma$ and hence is not in $\Id(A)$. It follows that $h \notin
\Id(A_{j_{0}})$ for some $j_{0}$ showing that $f \notin \Id(A_{j_{0}})$.
As mentioned above, $f$ must be Kemer for $A_{j_{0}}$ and so we
may apply the Phoenix property for Kemer polynomials of the basic
algebra $A_{j_{0}}$ (see Corollary \ref{Kemer polynomials of A are Phoenix}). This says that there is $f' \in \langle h
\rangle$ which is Kemer for $A_{j_0}$ and hence Kemer for $A$.
Applying once again Proposition \ref{pre-phoenix} we have that $f'$ is
Kemer of $\Gamma$. The theorem is now proved.

\end{proof}

\end{section}

\section{Technical tools}

\begin{subsection}{Zubrilin-Razmyslov Traces}

We have seen already the usefulness of traces for the purpose of representability (via the
Cayley-Hamilton theorem). The
theme of this section is to get a version of this theorem to a
more general PI setting. We start by introducing the analogue of ``characteristic values'' (this terminology will be clearer below).

\begin{definition}
Let $f(x_{1},\ldots,x_{n},\Lambda)$ be a polynomial and let $z$ be any
variable where $z \neq x_{i}$, $i=1,\ldots,n$. We define the polynomial

\[ \delta_{k}^{z}|_{x_{1},\ldots,x_{n}}(f)=
\sum_{1\leq i_{1}<\cdots<i_{k}\leq n}f_{i_{1},\ldots,i_{k}}
\]
where $f_{i_{1},\ldots,i_{k}}$ is the polynomial obtained from $f$ by
substituting $zx_{i_{j}}$ in $x_{i_{j}}$ for $j=1,\ldots,k$. Using
Zubrilin's notation $f_{i_{1},\ldots,i_{k}}=f|_{zx_{i_{1}}\rightarrow
x_{i_{1}},\ldots,zx_{i_{k}}\rightarrow x_{i_{k}}}$.

For $k=0$, we set $\delta_{0}^{z}|_{x_{1},\ldots,x_{n}}(f)=f$.

\end{definition}
\begin{remark}
Notice that the operators $\delta_{k}^{z}$'s are $F$-linear. They depend on the variables
$x_{1},\ldots,x_{n}$, however since we always refer to the same
variables $x_{1},\ldots,x_{n}$ we adopt the abbreviated notation
$\delta_{k}^{z}$.
\end{remark}

\begin{remark}
Notice that this definition makes sense also in case $f$ does not
depend on some of the variables $x_{1},\ldots,x_{n}$ and in
particular, for polynomials $f$ which are free of these variables
(e.g.
$\delta_{4}^{z}|_{x_{1},\ldots,x_{6}}(x_1y_1x_2y_2)=\sum_{1\leq
i_{1}<i_2<i_3<i_{4}\leq 6}f_{i_{1},\ldots,i_{4}}=$. We have $15$
tuples $(i_1 < i_2 < i_3 < i_4)$
$$\{(1234), (1235), (1236), (1245),
(1246), (1256)$$ $$(1345), (1346), (1356), (1456)$$ $$(2345),
(2346), (2356), (2456)$$ $$(3456)\}.$$ This yields the following
polynomial:
$$
6zx_1y_1zx_2y_2 + 4zx_1y_1x_2y_2 + 4x_1y_1zx_2y_2.
$$

\end{remark}
\begin{remark}
We may consider the operation $\delta_{k}^{z}$ in the following
setting. Let $F\langle x_{1},\ldots,x_{n}, \Sigma \rangle$ be an
affine free algebra where $\Sigma$ is a finite set of variables.
Let $z=p(\Sigma)$ be any polynomial which is free of the variables
$x_{1},\ldots,x_{n}$. We define the operator $\delta_{k}^{p}$ on
$F\langle x_{1},\ldots,x_{n}, \Sigma \rangle$ by

\[
\delta_{k}^{p(\Sigma)}|_{x_{1},\ldots,x_{n}}(f)=\sum_{1\leq i_{1}<\cdots<i_{k}\leq n}f_{i_{1},\ldots,i_{k}}
\]
where $f_{i_{1},\ldots,i_{k}}$ is the polynomial obtained from $f$ by
substituting $p(\Sigma)x_{i_{j}}$ in $x_{i_{j}}$ for $j=1,\ldots,k$.
\end{remark}

\begin{remark}
Another way to view $\delta_{k}^{z}|_{x_{1},\ldots,x_{n}}(f)$, in
case $f$ is independent of the variable $z$ and multilinear in
$x_{1},\ldots,x_{n}$, is by taking the homogenous component of
degree $k$ in $z$ in the polynomial
$f((1+z)x_{1},\ldots,(1+z)x_{n},\Lambda)$.

\end{remark}

\begin{lemma}
\label{delta_commutative}Notation as above. Suppose that $z=p(\Sigma)$
is independent of the variables $x_{1},\ldots,x_{n}$.  The
following hold.
\begin{enumerate}
\item If $f$ is multilinear and alternating on $x_{1},\ldots,x_{n}$ then
so does $\delta_{k}^{z}(f)$.
\item
$\delta_{k}^{z}$ and $\delta_{k'}^{z}$ commute.
\end{enumerate}
\end{lemma}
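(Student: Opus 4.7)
The plan is to reason directly from the definition $\delta_k^z(f) = \sum_{|S|=k} f_S$, where $f_S$ denotes the polynomial obtained from $f$ by replacing each occurrence of $x_i$ by $zx_i$ for every $i \in S$. For the case when $f$ is independent of $z$, a conceptually cleaner alternative — useful as a sanity check for part (1) — is that $\delta_k^z(f)$ is the homogeneous component of degree $k$ in $z$ of $f((1+z)x_1,\ldots,(1+z)x_n,\Lambda)$, but the uniform combinatorial argument handles all cases at once.

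For part (1), multilinearity in $x_1,\ldots,x_n$ is immediate: since $z = p(\Sigma)$ is independent of the $x_j$'s, the substitution $x_i \mapsto zx_i$ is homogeneous of degree $1$ in $x_i$, so each $f_S$ (and hence the sum) remains multilinear in $x_1,\ldots,x_n$. For alternation, I would fix a transposition $\pi = (a,b)$ of $x_a, x_b$ and analyze $\pi(\delta_k^z(f))$ summand by summand, partitioning the index set according to $|S \cap \{a,b\}|$. When that number is $0$ or $2$, the substitution commutes with $\pi$, so $\pi(f_S) = (\pi f)_S = -f_S$ by the alternation of $f$. When it equals $1$, the transposition intertwines $f_S$ with $-f_{S'}$, where $S' = S \triangle \{a,b\}$ is again of cardinality $k$; the involutive pairing $S \leftrightarrow S'$ on such subsets then ensures the whole sum still picks up a global sign of $-1$.

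For part (2), I would simply unpack the iterated substitution. For $|S|=k$ and $|T|=k'$, the polynomial $(f_S)_T$ is obtained from $f$ by substituting $x_i \mapsto z^2 x_i$ for $i \in S \cap T$, $x_i \mapsto zx_i$ for $i \in S \triangle T$, and leaving $x_i$ alone for $i \notin S \cup T$. This recipe is manifestly symmetric in $S$ and $T$ since $S \cap T = T \cap S$ and $S \triangle T = T \triangle S$, so $(f_S)_T = (f_T)_S$; summing over all such pairs immediately gives $\delta_{k'}^z \delta_k^z(f) = \delta_k^z \delta_{k'}^z(f)$.

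The only step requiring real care is the ``odd'' case in part (1): the individual polynomials $f_S$ are \emph{not} alternating in general (the substitution destroys the symmetry unless $\{a,b\} \subseteq S$ or $\{a,b\} \cap S = \emptyset$), so the alternation of $\delta_k^z(f)$ has to be recovered globally through the $S \leftrightarrow S'$ pairing rather than term-by-term. Once this bookkeeping is made explicit, both statements reduce to straightforward unpacking of the definition.
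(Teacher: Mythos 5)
Your argument is correct and follows essentially the same route as the paper: part (2) is the identical computation $(f_S)_T=(f_T)_S$ via $S\cap T$ and $S\triangle T$, and in part (1) you partition the size-$k$ subsets by their intersection with $\{a,b\}$ and cancel the ``odd'' subsets in pairs via $S\mapsto S\triangle\{a,b\}$, which is exactly the pairing the paper exploits. The only cosmetic difference is that you certify alternation by the sign change under a transposition, while the paper checks the (equivalent, for multilinear polynomials in characteristic zero) vanishing of $\delta_{k}^{z}(f)$ upon identifying $x_{i}=x_{j}$.
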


\begin{proof}
\textbf{(1)}~Suppose $k>1$ and $1\leq i\neq j\leq n$. Write:
\begin{multline}
\delta_{k}^{z}(f)=\sum_{S\in A_{k}}f|_{\forall t\in S:\, zx_{t}\rightarrow x_{t}}+\sum_{S\in A_{k-1}}f|_{zx_{i}\rightarrow x_{i},\,\,\forall t\in S:\, zx_{t}\rightarrow x_{t}}+\\
+\sum_{S\in A_{k-1}}f|_{zx_{j}\rightarrow x_{j},\,\forall t\in S:\, zx_{t}\rightarrow x_{t}}+\sum_{S\in A_{k-2}}f|_{zx_{i}\rightarrow x_{i},zx_{j}\rightarrow x_{j},\,\,\forall t\in S:\, zx_{t}\rightarrow x_{t}}\label{eq:1-1}
\end{multline}
where $A_{k}$ contains all the subsets of $\{1,\ldots,n\}$ of size
$k$ not containing $i$ nor $j$. We need to show that
$\delta_{k}^{z}(f)|_{x_{i}=x_{j}}=0$. We do so by showing that the
first sum, the last sum and the sum of the two middle sums are zero
separately:
\begin{eqnarray*}
\sum_{S\in A_{k}}\left(f|_{\forall t\in S:\, zx_{t}\rightarrow x_{t}}\right)|_{x_{i}=x_{j}} & = & \left(\sum_{S\in A_{k}}\left(f|_{x_{i}=x_{j}}\right)|_{\forall t\in S:\, zx_{t}\rightarrow x_{t}}\right)|_{x_{i}=x_{j}}=0\\
\sum_{S\in A_{k-2}}\left(f|_{zx_{i}\rightarrow x_{i},zx_{j}\rightarrow x_{j},\,\,\forall t\in S:\, zx_{t}\rightarrow x_{t}}\right)|_{x_{i}=x_{j}} & = & \left(\sum_{S\in A_{k-2}}\left(f|_{zx_{i}\rightarrow x_{i},zx_{i}\rightarrow x_{j}}\right)|_{\forall t\in S:\, zx_{t}\rightarrow x_{t}}\right)|_{x_{i}=x_{j}}=0
\end{eqnarray*}
and
\begin{multline*}
\left(\sum_{S\in A_{k-1}}\left(f|_{zx_{i}\rightarrow x_{i},\,\,\forall t\in S:\, zx_{t}\rightarrow x_{t}}\right)+\sum_{S\in A_{k-1}}\left(f|_{zx_{j}\rightarrow x_{j}\,\,\forall t\in S:\, zx_{t}\rightarrow x_{t}}\right)\right)|_{x_{i}\rightarrow x_{j}}=\\
\sum_{S\in A_{k-1}}f|_{zx_{i}\rightarrow x_{i},x_{i}\rightarrow x_{j}\,\,\forall t\in S:\, zx_{t}\rightarrow x_{t}}-\left(\sum_{S\in A_{k-1}}\left(f|_{x_{i}\leftrightarrow x_{j}}\right)|_{zx_{j}\rightarrow x_{j},\,\,\forall t\in S:\, zx_{t}\rightarrow x_{t}}\right)|_{x_{i}\rightarrow x_{j}}=\\
\sum_{S\in A_{k-1}}f|_{zx_{i}\rightarrow x_{i},x_{i}\rightarrow x_{j}\,\,\forall t\in S:\, zx_{t}\rightarrow x_{t}}-\left(\sum_{S\in A_{k-1}}f|_{x_{i}\rightarrow x_{j},zx_{j}\rightarrow x_{i},\,\,\forall t\in S:\, zx_{t}\rightarrow x_{t}}\right)|_{x_{i}\rightarrow x_{j}}=\\
\sum_{S\in A_{k-1}}f|_{zx_{i}\rightarrow x_{i},x_{i}\rightarrow
x_{j}\,\,\forall t\in S:\, zx_{t}\rightarrow
x_{t}}-\left(\sum_{S\in A_{k-1}}f|_{zx_{i}\rightarrow x_{i},
x_{i}\rightarrow x_{j}, \,\,\forall t\in S:\, zx_{t}\rightarrow
x_{t}}\right)=0
\end{multline*}
All in all we get $0$. We still need to consider the cases $k=0$
and $k=1$. The case $k=0$ is trivial since $\delta_{0}^{z}(f)=f$.
In case $k=1$ we get the same proof as for $k>1$ with the only difference that the
last sum in $($\ref{eq:1-1}$)$ does not appear here.

\textbf{(2)~}Denote by $L_{k}$ the set containing all the subsets
of $\{1,\ldots,n\}$ of size $k$. Then
\[
\delta_{k}^{z}(f)=\sum_{S\in L_{k}}f|_{\forall i\in S:\, zx_{i}\rightarrow x_{i}}
\]
and
\[
\delta_{k'}^{z}(\delta_{k}^{z}(f))=\sum_{S_{2}\in
L_{k'}}\sum_{S_{1}\in L_{k}}\left(f|_{\forall i\in S_{1}:\,
zx_{i}\rightarrow x_{i}}\right)|_{\forall j\in S_{2}:\,
zx_{j}\rightarrow x_{j}}=\sum_{S_{1}\in L_{k},S_{2}\in
L_{k'}}f|_{\forall i\in S_{1}\cap S_{2}:\, z^{2}x_{i}\rightarrow
x_{i},\,\,\forall i\in S_{1}\oplus S_{2}:\, zx_{i}\rightarrow
x_{i}}
\]
where $S_{1}\oplus S_{2} = (S_{1} \setminus S_{2}) \cup (S_{2}
\setminus S_{1})$. This proves the claim. Notice that the second
equality holds because $z$ is independent of
$\{x_{1},\ldots,x_{n}\}$.
\end{proof}

Suppose $W$ is an affine algebra with Kemer index $(n,r)$. We
let $\mu$ be the minimal number of small sets in its Kemer
polynomials.  Let $f=f(X,\Lambda)$ be a Kemer polynomial of $W$
($X$ and $\Lambda$ are sets of variables) and suppose that
(already) $\mu$ small sets and all big sets of variables are
contained in $\Lambda$.

\begin{lemma}
\label{lem:technical}Suppose $f(x_{1},\ldots,x_{n+1},\Lambda)$ is such a Kemer polynomial which in addition
is multilinear on $x_{1},\dots,x_{n+1}$ and alternates on
$x_{1},\ldots,x_{n}$. Then
\[
\sum_{t=0}^{n}(-1)^{t}\delta_{t}^{z}\left(f|_{z^{n-t}x_{n+1}\rightarrow
x_{n+1}}\right)\in \Id(W)
\]
\end{lemma}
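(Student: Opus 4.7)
The plan is to prove $F(z):=\sum_{t=0}^{n}(-1)^{t}\delta_{t}^{z}(f|_{z^{n-t}x_{n+1}\rightarrow x_{n+1}})\in \Id(W)$ by combining a Kemer-index obstruction with a Cayley--Hamilton-type substitution calculus.

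\textbf{Step 1 (the Kemer obstruction).} I will first observe that the ``enlarged'' alternating polynomial
\[
g(x_1,\ldots,x_{n+1},\Lambda):=\sum_{\sigma\in \operatorname{Sym}(n+1)}(-1)^{\sigma}f(x_{\sigma(1)},\ldots,x_{\sigma(n+1)},\Lambda)
\]
lies in $\Id(W)$. Indeed, $f$ is Kemer of index $(n,r)$ with $\mu$ small sets and all $r$ big sets already located inside $\Lambda$, while $\{x_1,\ldots,x_n\}$ is an additional small set of size $d=n$; alternating also on $x_{n+1}$ promotes this small set to a big one, so $g$ would have $\mu$ small sets together with $r+1$ big sets. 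This exceeds the second coordinate of $\Ind(W)=(n,r)$, so $g\in \Id(W)$.

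\textbf{Step 2 (substitution reduction).} Since $\Id(W)$ is a $T$-ideal, every substitution $g|_{x_j\mapsto z^{c_j}x_j}$ is in $\Id(W)$, and hence so is any $F$-linear combination of such substitutions. The key combinatorial claim I would prove is that $F(z)$ is exactly such a linear combination: specifically, summing $g(z^{c_1}x_1,\ldots,z^{c_{n+1}}x_{n+1},\Lambda)$ over tuples with $c_j\in\{0,1\}$ for $j\le n$ and $c_{n+1}=n-\sum_{i\le n}c_i$, weighted by signs $\epsilon(c)=(-1)^{\sum_{i\le n}c_i}$, recovers $F(z)$ up to summands separately belonging to $\Id(W)$.

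\textbf{Step 3 (combinatorial verification).} Each substituted $g$ expands, via its definition, into a signed sum over $\operatorname{Sym}(n+1)$. Using the alternation of $f$ on positions $1,\ldots,n$, the inner permutations of $\operatorname{Sym}(n)$ collapse, and each $g$ becomes a signed sum of $n+1$ ``reinsertion'' terms indexed by $k=\sigma(n+1)\in\{1,\ldots,n+1\}$, of the form $f(x_1,\ldots,\widehat{x_k},\ldots,x_{n+1},x_k,\Lambda)$ carrying $z$-twists determined by $(c_1,\ldots,c_{n+1})$. The signs and tuples in Step~2 are then designed so that, after summing over $c$, all contributions with $k\ne n+1$ cancel pairwise, leaving only the $k=n+1$ contributions; these reassemble into $F(z)$. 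The cancellation is the non-commutative incarnation of the Newton/Cayley--Hamilton identities applied to left multiplication by $z$ on the $n$-dimensional ``alternating space'' spanned by $x_1,\ldots,x_n$; the commutation $\delta_{k}^{z}\delta_{k'}^{z}=\delta_{k'}^{z}\delta_{k}^{z}$ and the preservation of alternation by $\delta_{k}^{z}$ from Lemma~\ref{delta_commutative} are the combinatorial tools that make this bookkeeping possible.

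\textbf{The main obstacle} I expect is the sign-tracking in Step~3: there are $2^n$ many tuples interacting with $(n+1)!$ many permutations, and identifying precisely which pairs cancel requires careful indexing. A cleaner alternative I would also pursue is to pass to a finite-dimensional basic algebra $A$ with $\Id(A)\subseteq \Id(W)$ realizing the same Kemer index (as provided by Section~\ref{Finite-generation-of} and the construction of the trace ring in Section~\ref{The-trace-ring}): on such an $A$, after an evaluation where $x_1,\ldots,x_n$ span $\overline{A}$, the identity $F(z)=0$ reduces to the classical matrix Cayley--Hamilton theorem applied to $L_z$ restricted to $\overline{A}$, and the general statement for $W$ then follows from $\Id(A)\subseteq\Id(W)$.
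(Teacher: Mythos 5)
Your Step 1 is exactly the paper's obstruction (your fully alternated $g$ is $n!$ times the polynomial $\tilde g$ the paper works with), and Step 2's use of the $T$-ideal property is legitimate; the argument collapses at the cancellation claim of Step 3, and the failure is structural, not a matter of sign bookkeeping. Expanding each substituted copy of $g$ through its $n+1$ reinsertion terms and summing over your tuples $c\in\{0,1\}^n$, $c_{n+1}=n-\sum_{i\le n}c_i$, with sign $(-1)^{c_1+\cdots+c_n}$, gives
\[
\sum_{c}(-1)^{c_1+\cdots+c_n}\,g(z^{c_1}x_1,\ldots,z^{c_{n+1}}x_{n+1},\Lambda)
= n!\left(F(z)-\sum_{k=1}^{n}T_k\right),
\]
where $F(z)$ is the polynomial of the lemma and
\[
T_k=\sum_{c}(-1)^{c_1+\cdots+c_n}\,
f\bigl(z^{c_1}x_1,\ldots,z^{c_{k-1}}x_{k-1},\,z^{c_{n+1}}x_{n+1},\,z^{c_{k+1}}x_{k+1},\ldots,z^{c_n}x_n,\,z^{c_k}x_k,\Lambda\bigr).
\]
No pairwise cancellation occurs among the terms with $k\le n$: within a fixed $T_k$ distinct tuples $c$ give distinct twist patterns, terms of $T_k$ and $T_{k'}$ with $k\neq k'$ have different variables in the last slot, and none of them coincides with a summand of $F(z)$. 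So membership of the left-hand side in $\Id(W)$ only yields $F(z)\equiv\sum_{k}T_k\ (\mathrm{mod}\ \Id(W))$, and you still have to prove $\sum_k T_k\in\Id(W)$, which is the original difficulty. Already for $n=1$ the leftover is $T_1=f(zx_2,x_1)-f(x_2,zx_1)$, and applying the identity $\tilde g$ to each summand gives $T_1\equiv f(x_1,zx_2)-f(zx_1,x_2)=F(z)$, so your scheme circles back to the statement being proved. The reason is the order of operations: you twist \emph{after} alternating, so the variable moved into the last slot carries a twist $z^{c_k}$. The paper twists \emph{before} alternating: it applies the Kemer obstruction to $\widetilde{\delta_t^z(f)}$ (alternation is preserved by $\delta_t^z$, Lemma \ref{delta_commutative}) and only afterwards substitutes $z^{n-t}x_{n+1}$ for $x_{n+1}$; then the moved variable is never twisted, swapping with a twisted $x_k$ merely raises the exponent on $x_{n+1}$ by one, and the unwanted terms telescope identically to zero ($g_n=h_0=0$, $h_{t+1}=g_t$). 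That reordering is the missing idea; without it the cancellation you need simply does not exist.

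The fallback you sketch is not a proof either, as stated. The logical direction is fine (it suffices to exhibit $A$ with $\Id(A)\subseteq\Id(W)$ and $F(z)\in\Id(A)$, and Proposition \ref{pre-phoenix} --- rather than Sections \ref{Finite-generation-of} and \ref{The-trace-ring} --- supplies a suitable algebra with the same Kemer index), but the reduction to ``classical Cayley--Hamilton for $L_z$ restricted to $\overline{A}$'' does not cover all evaluations: $z$ may take a value with a radical component, in which case left multiplication does not preserve $\overline{A}$; in a nonzero evaluation of the individual summands of $F(z)$ what is forced to be semisimple and linearly independent are the slot values (products such as $\hat z\,\hat x_i$ for the twisted indices), not the $\hat x_i$ themselves; and one must separately dispose of all mixed and degenerate evaluations. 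Making this precise requires exactly the semisimple-plus-radical decomposition analysis that the paper's purely formal computation is designed to avoid, so as written it remains a heuristic.
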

\begin{proof}
It easy to check that if $g(x_{1},\ldots,x_{n+1},E)$ , $E$ any set of variables, is
multilinear on $x_{1},\ldots,x_{n+1}$ and alternates on
$x_{1},\ldots,x_{n}$, then
\[
\tilde{g}=g(x_{1},\ldots,x_{n+1},E)-\sum_{k=1}^{n}g(x_{1},\ldots,x_{k-1},x_{n+1},x_{k+1},..,x_{n},x_{k},E)
\]
alternates on $x_{1},\ldots,x_{n+1}$. Therefore, if we replace
$g$ by $\delta_{t}^{z}(f)$ we obtain that
\[
\widetilde{\delta_{t}^{z}(f)}=\left(\delta_{t}^{z}(f)\right)-\sum_{k=1}^{n}\left(\delta_{t}^{z}(f)\right)|_{x_{k}\leftrightarrow
x_{n+1}}
\]
has $\mu$ alternating small sets and $r+1$ alternating big sets.
Thus, $\widetilde{\delta_{t}^{z}(f)}$ is an identity of $W$.

Substituting $z^{n-t}x_{n+1}$ in $x_{n+1}$ yields:
\begin{eqnarray*}
\left(\delta_{t}^{z}(f)\right)|_{z^{n-t}x_{n+1}\rightarrow x_{n+1}} & \equiv & \sum_{k=1}^{n}\left(\left(\delta_{t}^{z}(f)\right)|_{x_{k}\leftrightarrow x_{n+1}}\right)
|_{z^{n-t}x_{n+1}\rightarrow x_{n+1}}\,\,\mbox{mod}\, (\Id(W))\\
 & = & \sum_{k=1}^{n}\left(\delta_{t}^{z}(f)\right)|_{x_{k}\rightarrow x_{n+1},z^{n-t}x_{n+1}\rightarrow x_{k}}
\end{eqnarray*}
Since the operator $\delta_{t}^{z}$ ``has no effect'' on
$x_{n+1}$, one can see easily that the operation of
$\delta_{t}^{z}$ commutes with the substitution
${z^{n-t}x_{n+1}\rightarrow x_{n+1}}$, that is
$\left(\delta_{t}^{z}(f)\right)|_{z^{n-t}x_{n+1}\rightarrow
x_{n+1}}=\delta_{t}^{z}\left(f|_{z^{n-t}x_{n+1}\rightarrow
x_{n+1}}\right)$ and so we have
\[
\delta_{t}^{z}\left(f|_{z^{n-t}x_{n+1}\rightarrow
x_{n+1}}\right)\equiv\sum_{k=1}^{n}\left(\delta_{t}^{z}(f)\right)|_{x_{k}\rightarrow
x_{n+1},z^{n-t}x_{n+1}\rightarrow x_{k}}\,\,\mbox{mod}\,
(\Id(W)).
\]
Therefore, the lemma will be proved if we show
\[
\sum_{t=0}^{n}\sum_{k=1}^{n}(-1)^{t}\left(\delta_{t}^{z}(f)\right)|_{x_{k}\rightarrow
x_{n+1},z^{n-t}x_{n+1}\rightarrow x_{k}}\in \Id(W)
\]
and after changing the order of summation, it is clear that
it is enough to show that for every $k$
\[
\sum_{t=0}^{n}(-1)^{t}\left(\delta_{t}^{z}(f)\right)|_{x_{k}\rightarrow
x_{n+1},z^{n-t}x_{n+1}\rightarrow x_{k}}=0.
\]
For simplicity (and in fact, without loss of generality) we show the statement in case $k=1$. The following equality
holds
\begin{eqnarray*}
\left(\delta_{t}^{z}(f)\right)|_{x_{1}\rightarrow x_{n+1},z^{n-t}x_{n+1}\rightarrow x_{1}} & = & \underset{g_{t}}{\underbrace{\sum_{1<i_{1}<\cdots<i_{t}\leq n}
\left(f_{i_{1},\ldots,i_{t}}\right)|_{x_{1}\rightarrow x_{n+1},z^{n-t}x_{n+1}\rightarrow x_{1}}}}\\
+\underset{h_{t}}{\underbrace{\sum_{1=i_{1}<i_{2}<\cdots<i_{t}\leq
n}\left(f_{i_{1},\ldots,i_{t}}\right)|_{x_{1}\rightarrow
x_{n+1},z^{n-t}x_{n+1}\rightarrow x_{1}}}}
\end{eqnarray*}
 Thus,
\begin{eqnarray*}
g_{t} & = & \sum_{1<i_{1}<\cdots<i_{t}\leq n}f|{}_{x_{1}\leftarrow z^{n-t}x_{n+1},x_{i_{1}}\leftarrow zx_{i_{1}},\ldots,x_{i_{t}}\leftarrow zx_{i_{t}},x_{n+1}\leftarrow x_{1}}\\
h_{t} & = & \sum_{1<i_{2}<\cdots<i_{t}\leq n}f|_{x_{1}\leftarrow
z^{n-t+1}x_{n+1},x_{i_{2}}\leftarrow
zx_{i_{2}},\ldots,x_{i_{t}}\leftarrow zx_{i_{t}},x_{n+1}\leftarrow
x_{1}}.
\end{eqnarray*}
Observe that $g_{n}=h_{0}=0$ and $h_{t+1}=g_{t}$ for $t=0,\ldots,
n-1$ and so
\[
\sum_{t=0}^{n}(-1)^{t}\left(\delta_{t}^{z}(f)\right)|_{x_{1}\rightarrow
x_{n+1},z^{n-t}x_{n+1}\rightarrow
x_{1}}=\sum_{t=0}^{n}(-1)^{t}(h_{t}+g_{t})=0.
\]

\end{proof}
We will use the above lemma in the following setting.
\begin{corollary} \label{cor:Cayeley-Hamilton}

Let
$f(x_{1},\ldots,x_{n+1},\Lambda)=\sum_{\sigma}f_{1,\sigma}x_{n+1}f_{2,\sigma}$
be a Kemer polynomial which is multilinear on $x_{1},\ldots,x_{n+1}$ and
alternating on  $x_{1},\ldots,x_{n}$. Suppose the variable $z$ is
not one of the variables $x_{1},\ldots,x_{n}$. Then
\[
\sum_{t=0}^{n}(-1)^{t}\delta_{t}^{z}\left(\sum_{\sigma}f_{1,\sigma}z^{n-t+1}f_{2,\sigma}\right)\in \Id(W).
\]
\end{corollary}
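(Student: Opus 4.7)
The plan is to derive the corollary from Lemma \ref{lem:technical} by a single further substitution. First I apply Lemma \ref{lem:technical} to the polynomial $f(x_1,\ldots,x_{n+1},\Lambda)$, taking $z$ to be a fresh variable distinct from $x_1,\ldots,x_{n+1}$ and from all variables appearing in $\Lambda$. This yields
\[
\sum_{t=0}^{n}(-1)^{t}\delta_{t}^{z}\left(f|_{z^{n-t}x_{n+1}\rightarrow x_{n+1}}\right)\in \Id(W).
\]
Because $f=\sum_{\sigma}f_{1,\sigma}x_{n+1}f_{2,\sigma}$ is multilinear in $x_{n+1}$, the inner substitution simply inserts $z^{n-t}$ in front of each occurrence of $x_{n+1}$, so
\[
f|_{z^{n-t}x_{n+1}\rightarrow x_{n+1}}\;=\;\sum_{\sigma}f_{1,\sigma}\,z^{n-t}x_{n+1}\,f_{2,\sigma}.
\]

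Next, I substitute $x_{n+1}\mapsto z$ on both sides of the identity above. Since $\Id(W)$ is a $T$-ideal, the resulting polynomial remains in $\Id(W)$. The essential observation is that the substitution $x_{n+1}\mapsto z$ commutes with the operator $\delta_{t}^{z}$: by definition $\delta_{t}^{z}$ is a sum of elementary substitutions $x_{i_j}\mapsto zx_{i_j}$ with $i_j\in\{1,\ldots,n\}$, each of which involves a variable disjoint from $\{x_{n+1}\}$, so these elementary substitutions (and hence $\delta_{t}^{z}$ itself) commute with $x_{n+1}\mapsto z$. Pushing this substitution inside each term transforms $\sum_{\sigma}f_{1,\sigma}z^{n-t}x_{n+1}f_{2,\sigma}$ into $\sum_{\sigma}f_{1,\sigma}z^{n-t+1}f_{2,\sigma}$, and summing over $t$ with the signs $(-1)^t$ yields exactly the conclusion claimed.

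The only point that requires any care is the commutativity of $x_{n+1}\mapsto z$ with $\delta_{t}^{z}$, which is immediate from the disjointness of the variable sets involved. All remaining steps are routine bookkeeping, so I do not anticipate any genuine obstacle in the argument.
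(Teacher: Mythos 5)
Your proposal is correct and is essentially identical to the paper's own proof: apply Lemma \ref{lem:technical} and then substitute $x_{n+1}\mapsto z$, using that $\Id(W)$ is a $T$-ideal. The commutation of this substitution with $\delta_{t}^{z}$, which you spell out, is exactly the (implicit) justification in the paper's one-line argument.
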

\begin{proof}
Use the previous lemma to obtain:
\[
\sum_{t=0}^{n}(-1)^{t}\delta_{t}^{z}\left(\sum_{\sigma}f_{1,\sigma}z^{n-t}x_{n+1}f_{2,\sigma}\right)\in \Id(W).
\]
Now substitute $x_{n+1}\leftarrow z$.
\end{proof}

We will be interested in the following special case.
\begin{corollary}\label{final expression for the use of interpretation}

Let
$f(x_{1},\ldots,x_{n+1},\Lambda)=\sum_{\sigma}f_{1,\sigma}x_{n+1}f_{2,\sigma}$ as in the corollary above. Let $F\langle x_{1},\ldots,x_{n},\Sigma \rangle$ be an affine
free algebra where $\Sigma$ is any set of variables
disjoint to the set $\{x_{1},\ldots,x_{n}\}$ and let $p=p(\Sigma) \in F\langle
x_{1},\ldots,x_{n},\Sigma \rangle$. Then

 \[
\sum_{t=0}^{n}(-1)^{t}\delta_{t}^{p}\left(\sum_{\sigma}f_{1,\sigma}p^{n-t+1}f_{2,\sigma}\right)\in \Id(W) \cap F\langle x_{1},\ldots,x_{n},\Sigma \rangle.
\]

\end{corollary}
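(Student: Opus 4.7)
The plan is to deduce this corollary directly from Corollary \ref{cor:Cayeley-Hamilton} by an endomorphism (substitution) argument. Corollary \ref{cor:Cayeley-Hamilton} produces an identity
\[
F(z,x_1,\ldots,x_n,\Lambda)\;:=\;\sum_{t=0}^{n}(-1)^{t}\delta_{t}^{z}\Bigl(\sum_{\sigma}f_{1,\sigma}z^{n-t+1}f_{2,\sigma}\Bigr)\in \Id(W),
\]
viewed inside the free algebra $F\langle z,x_1,\ldots,x_n,\Lambda\rangle$, where we take $z$ to be a fresh variable disjoint from $\{x_1,\ldots,x_n\}\cup\Lambda$ (and we assume, as we may, that $\Lambda\subseteq \Sigma$, which is the role of $\Sigma$ in the conclusion).

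The key observation is that $\Id(W)$ is a $T$-ideal, hence stable under every $F$-algebra endomorphism of the free algebra. First I would consider the endomorphism $\phi$ of $F\langle z,x_1,\ldots,x_n,\Sigma\rangle$ defined by $\phi(z)=p(\Sigma)$, and $\phi$ the identity on $\{x_1,\ldots,x_n\}\cup\Sigma$. Since $\Sigma$ is disjoint from $\{x_1,\ldots,x_n\}$, the polynomial $p=p(\Sigma)$ is independent of the $x_i$'s, so the action of $\phi$ commutes with the operation ``substitute $zx_{i_j}\mapsto x_{i_j}$'' that defines $\delta_t^z$; explicitly,
\[
\phi\bigl(\delta_{t}^{z}(g)\bigr)\;=\;\delta_{t}^{p(\Sigma)}\bigl(\phi(g)\bigr)
\]
for every $g\in F\langle z,x_1,\ldots,x_n,\Sigma\rangle$, which is exactly the definition of $\delta_t^{p(\Sigma)}$ recalled in the Remark preceding Lemma \ref{delta_commutative}. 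Applying $\phi$ term by term to $F(z,x_1,\ldots,x_n,\Lambda)$ therefore yields
\[
\sum_{t=0}^{n}(-1)^{t}\delta_{t}^{p}\Bigl(\sum_{\sigma}f_{1,\sigma}p^{n-t+1}f_{2,\sigma}\Bigr),
\]
which is the polynomial appearing in the statement.

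Since $F\in \Id(W)$ and $\Id(W)$ is invariant under $\phi$, the image also lies in $\Id(W)$. By construction the image involves only the variables $\{x_1,\ldots,x_n\}\cup\Sigma$, so it lies in $F\langle x_1,\ldots,x_n,\Sigma\rangle$. Thus the asserted membership in $\Id(W)\cap F\langle x_1,\ldots,x_n,\Sigma\rangle$ follows.

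There is essentially no technical obstacle: the only point that deserves care is the commutation $\phi\circ \delta_t^{z}=\delta_t^{p(\Sigma)}\circ \phi$, and this rests precisely on the disjointness hypothesis between $\Sigma$ and $\{x_1,\ldots,x_n\}$ (so that $\phi$ does not rename any of the alternating variables, and $p$ is unaffected by the substitutions $zx_{i_j}\mapsto x_{i_j}$ that define $\delta_t^z$). Once this observation is in hand, the corollary is an immediate consequence of $T$-ideal invariance.
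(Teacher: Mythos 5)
Your proof is correct and is exactly the argument the paper intends: the paper states this corollary without proof as an immediate specialization of Corollary \ref{cor:Cayeley-Hamilton}, namely applying the endomorphism $z\mapsto p(\Sigma)$ and using that $\Id(W)$ is a $T$-ideal. Your explicit verification that this substitution commutes with the operators $\delta_t^{z}$ (which rests on $p$ being free of $x_1,\ldots,x_n$) is the only point needing care, and you handle it correctly.
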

\end{subsection}

\subsection{Interpretation Lemma}
\begin{lemma}
\label{lem:interpretation}Let $A$ be an $F$-algebra and let $L$ be
an ideal of $A$. Suppose the polynomial ring
$R=F[t_{1},\ldots,t_{n}]$ acts $F-$linearly on $L$ such that
$f\cdot(ax)=\left(f\cdot a\right)x$ and $f\cdot(xa)=x\left(f\cdot
a\right)$, for all $x\in A$, $a\in L$ and $f\in R$ $($that is the
action of $R$ on $L$ commutes with the $A$-bimodule structure on
$L$$)$. Then the natural map $A\rightarrow
A'=R\otimes_{F}A/(t_{i}\otimes a-1\otimes(t_{i}\cdot a))|a\in L,\,
i=1,\ldots,n)$ is an embedding.
\end{lemma}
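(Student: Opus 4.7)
The plan is to describe the kernel $J$ of the projection $R\otimes_F A \to A'$ concretely, to show that $J\subseteq R\otimes_F L$, and to identify the quotient $(R\otimes_F L)/J$ with $L$ via the action map. Injectivity of $A\to A'$ then drops out from comparing with the obvious splitting $R\otimes A = A\oplus (R_+\otimes A)$, where $R_+$ is the augmentation ideal of $R$.

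First I would verify that $J$ is the $F$-span of the elements $rt_is\otimes l - rs\otimes(t_i\cdot l)$ as $r,s$ range over $R$, $l$ over $L$, and $i$ over $\{1,\ldots,n\}$. The key computation, using that the $t_i$-action commutes with left and right multiplication by elements of $A$, is
\[
(r\otimes b)\bigl(t_i\otimes a - 1\otimes(t_i\cdot a)\bigr)(s\otimes c) = rt_is\otimes(bac) - rs\otimes\bigl(t_i\cdot(bac)\bigr),
\]
and $bac\in L$ since $L$ is an ideal; centrality of $R\otimes 1$ in $R\otimes A$ shows these exhaust the two-sided ideal generated by the relations. Both summands on the right lie in $R\otimes L$, so $J\subseteq R\otimes L$.

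Next, by induction on $\deg p$, I would show that $p\otimes l \equiv 1\otimes(p\cdot l)\pmod{J}$ for every $p\in R$ and $l\in L$; the inductive step for $p = t_i q$ is
\[
t_iq\otimes l - 1\otimes(t_iq\cdot l) = (t_i\otimes 1)\bigl(q\otimes l - 1\otimes(q\cdot l)\bigr) + \bigl(t_i\otimes(q\cdot l) - 1\otimes(t_i\cdot(q\cdot l))\bigr),
\]
both summands already in $J$. Consequently, the surjection $R\otimes L\to L$, $r\otimes l\mapsto r\cdot l$, descends to an isomorphism $(R\otimes L)/J \xrightarrow{\sim} L$: if $\sum r_j\otimes l_j$ maps to $0$ in $L$, then modulo $J$ it equals $1\otimes \sum(r_j\cdot l_j) = 0$.

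Finally, I would feed the inclusion $J\subseteq R\otimes L$ into the short exact sequence $0\to R\otimes L\to R\otimes A\to R\otimes(A/L)\to 0$, obtaining
\[
0\to L\to A'\to R\otimes_F (A/L)\to 0,
\]
where the left map is $l\mapsto \overline{1\otimes l}$. The composition of the natural map $A\to A'$ with $A'\to R\otimes(A/L)$ is $a\mapsto 1\otimes\bar{a}$, which vanishes only when $a\in L$; but on $L\subseteq A$, the natural map factors through the isomorphism $L\xrightarrow{\sim}(R\otimes L)/J$ as the identity, so $a\in L$ maps to $0$ in $A'$ only when $a=0$. This gives the injectivity. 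The only substantive point is the first one, namely that no stray elements outside $R\otimes L$ sneak into $J$; this is controlled precisely by the hypothesis that the $R$-action on $L$ commutes with the $A$-bimodule structure, which is what lets one pull $b(t_i\cdot a)c$ back inside $L$.
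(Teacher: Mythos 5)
Your proof is correct and follows essentially the same route as the paper's: you bound the defining ideal inside $R\otimes_F L$ using the hypothesis that the $R$-action commutes with the $A$-bimodule structure, collapse $(R\otimes_F L)/J$ to $L$ via the action map (the paper phrases this as $\ker\nu$ for $\nu\colon R\otimes_F L\to R\otimes_R L\cong L$), and finish by splitting off $R\otimes_F(A/L)$ (the paper uses a vector-space complement $V$ of $L$ instead of the quotient). Your explicit computations are just a spelled-out version of the paper's assertions that $\ker\nu$ is generated by the relations and is invariant under multiplication by $R\otimes_F A$.
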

\begin{proof}

Consider the exact sequence $0 \rightarrow \ker \nu \rightarrow R
\otimes_{F}L \rightarrow R \otimes_{R} L\cong L \rightarrow 0$
where $\nu: R \otimes_{F}L \rightarrow R \otimes_{R} L$ is the
natural map of $F$-algebras. Clearly $\ker \nu = \langle
t_{i}\otimes a-1\otimes(t_{i}\cdot a)|a\in L,\,
i=1,\ldots,n\rangle_{R \otimes_{F}L}$. However, the ideal $\ker \nu
\subseteq R \otimes_{F}L$ is invariant under the action of $R
\otimes_{F}A$ and hence if we denote by $V$ the vector space that
supplements $L$ in $A$, we have

$$A'=R\otimes_{F}A/\ker \nu \cong R\otimes_{F} (L \oplus V)/\ker \nu
$$
$$
= (R\otimes_{F} L \oplus R\otimes_{F} V)/\ker \nu \cong
(R\otimes_{F} L/\ker \nu) \oplus R\otimes_{F} V \cong (L \oplus
R\otimes_{F} V ).$$ This shows the natural  map $A \rightarrow A'$
is an embedding and the lemma is proved.
\end{proof}

\section{Representable spaces\label{Representable-Spaces}}

Let us summarize what we have and what remains to be done. We are
assuming that $\Gamma$ is the $T$-ideal of an affine algebra $W$
with Kemer index $p=(n,r)$ and $S_{p}$ is the $T$-ideal generated
by its Kemer polynomials (with at least $\mu$ small sets). The
idea is to proceed by induction on the Kemer index of $\Gamma$. We
assume the main theorem holds for all affine algebras with Kemer
index smaller than $p$ and prove it for $\Gamma$ (note that
$p=(0,0)$ if and only if $W=0$). Consider the $T$-ideal
$\Gamma'=\Gamma + S_{p}$. Clearly, its Kemer index $p'$ is smaller
than $p$ (for otherwise, any Kemer polynomial of $\Gamma'$ is a
Kemer polynomial of $\Gamma$ and hence in $S_{p}$) and hence there
is a representable algebra $A'$ (or a finite dimensional algebra
over a field extension) with $\Id(A')=\Gamma'$.

The ingredient we are still missing is
the existence of a representable algebra $B_{p}$ satisfying all
the identities of $W$ and such that any polynomial in $S_{p}$ (which is
not in $\Gamma$) is a nonidentity of $B_{p}$. Then it will be easy to conclude
that the representable algebra $A'\times B_{p}$ is PI equivalent to $W$.

Let $\mathcal{W}_{0}=F\langle \Sigma \rangle/\Id_{F\langle \Sigma
\rangle}(W)$ be an affine relatively free algebra of $W$. Here,
$\Id_{R}(W)$ denotes the ideal of an algebra $R$ generated by all
evaluations of $\Id(W)$ on $R$. Denote by $X_{p}$ a set of
$n(\mu+1)+(n+1)r$ variables (i.e. precisely the number of
variables needed to support the $\mu +1$ small sets and $r$ big
sets in a Kemer polynomial). Suppose the set $X_{p}$ is disjoint
to the variables $\Sigma$ (that generate $\mathcal{W}_{0}$) and
denote by $\mathcal{W}=F\langle X_{p},\Sigma \rangle/\Id_{F\langle
X_{p},\Sigma \rangle}(W)$ the affine relatively free algebra of
$W$ generated by $\Sigma$ and $X_{p}$.

Let us fix (for the rest of this section) a decomposition of
$X_{p}$ into $\mu+1$ sets of variables
$X_{1},\ldots,X_{\mu+1}$, each containing exactly $n$ elements, and
$r$ additional sets $X_{\mu+2},\ldots,X_{\mu+1+r}$, each containing
$n+1$ variables.

Let $f$ be a Kemer polynomial of $W$ with at least $\mu+1$ small sets.

Before getting into the definitions and the precise construction
of the representable algebra $B_{p}$, let us give here a short
outline of the construction. We consider the affine relatively
free algebra $\mathcal{W}=F\langle X_{p},\Sigma
\rangle/\Id_{F\langle X_{p},\Sigma \rangle}(W)$ of $W$ generated by
$\Sigma$ and $X_{p}$. The set $X_{p}$ was already ``fragmented''
into $\mu+1$ sets of variables $X_{1},\ldots,X_{\mu+1}$, each
containing exactly $n$ elements, and $r$ additional sets
$X_{\mu+2},\ldots,X_{\mu+1+r}$, each containing $n+1$ variables,
in particular, sufficiently many small sets and big sets to
support nonvanishing evaluations of any Kemer polynomial $f$ on
$\mathcal{W}=F\langle X_{p},\Sigma \rangle/\Id_{F\langle
X_{p},\Sigma \rangle}(W)$ in such a way that precisely $\mu+1$
small sets of such $f$ take values precisely in the $\mu+1$ small
sets $X_{1},\ldots,X_{\mu+1}$ (modulo $\Id_{F\langle X_{p},\Sigma
\rangle}(W)$) and all big sets of $f$ are evaluated (in one to one
correspondence) on the sets $X_{\mu+2},\ldots,X_{\mu+1+r}$. It is
not difficult to show that any Kemer polynomial $f$ has a nonzero
evaluation of that kind. These are by definition, the
\textit{admissible evaluations}. All such (nonzero evaluations)
evaluations span a vector space $V$ in $\mathcal{W}$. Our goal is
to mod out ideals of $\mathcal{W}$ such that at the end we obtain
a representable algebra $B_{p}$, and yet the space $V$ embeds in
$B_{p}$. This will show that the Kemer polynomials are
nonidentities of $B_{p}$. What are the ideals we mod out by? We
consider a Shirshov base in $\mathcal{W}=F\langle X_{p},\Sigma
\rangle/\Id_{F\langle X_{p},\Sigma \rangle}(W)$ represented by
monomials $z$ which either contain elements of $X_{p}$ or not. If
$z$ is such a monomial (i.e. $z+ \Id_{F\langle X_{p},\Sigma
\rangle}(W)$ is an element of the Shirshov base) that contains a
variable of $X_{p}$, then by modding out with the ideal $I$ of
$\mathcal{W}$ generated (or in fact consisting) of elements of
$F\langle X_{p},\Sigma \rangle$ in which at least one element of
$X_{p}$ appears twice, then the element $z+ \Id_{F\langle
X_{p},\Sigma \rangle}(W)$ is nilpotent modulo $I$ and hence
integral. On the other hand it is easy to see that the space $V$
intersects trivially the ideal $I$. Most of the efforts are
devoted to construct ideals such that by modding out them
successively, each element of the Shirshov base which is free of
elements of $X_{p}$, one at a time, becomes integral and yet the
space $V$ embeds.

\begin{definition}

An evaluation of $f$ on $F\langle X_{p},\Sigma \rangle$ is admissible if the following conditions are satisfied.

\begin{enumerate}

\item

Precisely $\mu+1$ small sets of $f$, say $\dot{X}_{1},\ldots,\dot{X}_{\mu+1}$, are evaluated bijectively on the sets $X_{1},\ldots,X_{\mu+1}$

\item

All big sets of $f$ are evaluated bijectively on the sets $X_{\mu+2},\ldots,X_{\mu+1+r}$

\item

The rest of the variables of $f$ are evaluated on $F\langle \Sigma \rangle$
\end{enumerate}

An evaluation of $f$ on $\mathcal{W}$ is admissible if it is represented by an admissible evaluation on $F\langle X_{p},\Sigma \rangle$.

\end{definition}

We denote by $\mathcal{S}_{p}$ the $F$-span (in $\mathcal{W}$) of all admissible evaluations of all Kemer polynomials of $W$.

Our goal in this section is to prove that $\mathcal{S}_{p}$ is a
representable space of $\mathcal{W}$. Here is the precise
definition.
\begin{definition}
Let $W$ be a PI $F$-algebra and let $S$ be an $F$-subspace of $W$. We say that
$S$ is an representable space of $W$ if there exist a
representable algebra $B$ with $\Id(B) \supseteq \Id(W)$ and a homomorphism
\[
\phi:W\rightarrow B
\]
such that $\phi$ maps $S$ isomorphically into $B$.
\end{definition}

\begin{remark}

Our main difficulty in the construction of $B$ is that on one
hand it should be not ``too big'' so that it is representable and
on the other hand not ``too small'' so that $\mathcal{S}_{p}$
embeds.

The compromise is achieved by ``forcing'' a Shirshov base of $\mathcal{W}$ to be integral over
some commutative Noetherian $F$-algebra.
\end{remark}

Let $Y$ be a Shirshov base of $\mathcal{W}$ consisting of elements
which are represented by monomials on the set $\Sigma \sqcup
X_{p}$. Denote by $Y_{0}=\{b_{1}+\Id_{F\langle X_{p},\Sigma
\rangle}(W),\ldots, b_{t}+\Id_{F\langle X_{p},\Sigma \rangle}(W)\}$
the elements of the Shirshov base where the representing
monomials are independent of $X_{p}$ and let $Y_{1}= Y\setminus
Y_{0}$ be the remaining elements of the Shirshov base.

Consider the algebra $\mathcal{U}=\mathcal{W}/I$ where $I$ is
generated by elements of the form $xwx+\Id_{F\langle X_{p},\Sigma
\rangle}(W)$ and $x^{2}+\Id_{F\langle X_{p},\Sigma \rangle}(W)$
where $x\in X_{p}$ and $w\in F\langle X_{p},\Sigma \rangle$.
Denote by $\phi: \mathcal{W} \rightarrow \mathcal{U}$ the natural
map. It is clear from the definition of $\mathcal{S}_{p}$ that
$\mathcal{S}_{p}\cap I=0$ and so, by abuse of notation, we write
$\mathcal{S}_{p}$ also for the (isomorphic) image
$\phi(\mathcal{S}_{p})$ in $\mathcal{U}$. Note that the elements
of $\overline{Y}_{1}=\phi(Y_{1})$ are nilpotent and hence
integral over $F$. Our goal is then to ``force'' $\phi(Y_{0})$, the
remaining elements of the Shirshov base, to become integral over
a suitable Noetherian ring, yielding a representable algebra and yet an algebra where the space $\mathcal{S}_{p}$ embeds.

\begin{remark}
As defined above, the elements $b_{i}$, $i=1,\ldots,t$, are
monomials in the affine free algebra $F\langle X_{p}, \Sigma
\rangle$. In the sequel, these elements will be considered in
different quotients of $F\langle X_{p}, \Sigma \rangle$. By abuse
of notations we denote them by $\bar{b}_{i}$, $i=1,\ldots,t$. This
should not confuse the reader.

\end{remark}

Let $\overline{Y}_{0}=\phi(Y_{0})=\{\bar{b}_{1},\ldots,\bar{b}_{t}\}$ and define by
induction for $i=1,\ldots,t$
\[
B_{p}^{(i)}=\frac{F\left[\theta_{1}^{(i)},\ldots,\theta_{n}^{(i)}\right]\otimes
B_{p}^{(i-1)}}{J_{i}},\,\, B_{p}^{(0)}=\mathcal{U}
\]
where $F\left[\theta_{1}^{(i)},\ldots,\theta_{n}^{(i)}\right]$ is a polynomial $F$-algebra and $J_{i}$ is the ideal generated by the
element
\[
1\otimes \bar{b}_{i}^{n+1}+\sum_{k=1}^{n}(-1)^{k}\theta_{k}^{(i)}\otimes
\bar{b}_{i}^{n-k+1}.
\]

(We remind the reader that $n$ is the size of a small set in Kemer polynomials)

In other words,
\[
B_{p}^{(i)}=\frac{R_{i}\otimes F\left\langle
X_{p},\Sigma\right\rangle }{J^{(i)}+I+\Id_{R_{i}\left\langle
X_{p},\Sigma\right\rangle}(W)}
\]
where $R_{i}=F\left[\theta_{1}^{(1)},\ldots,\theta_{n}^{(1)},\ldots,\theta_{1}^{(i)},\ldots,\theta_{n}^{(i)}\right]$
and $J^{(i)}$ is generated by
\[
1\otimes b_{j}^{n+1}+\sum_{k=1}^{n}(-1)^{k}\theta_{k}^{(j)}\otimes b_{j}^{n-k+1},\, j=1,\ldots,i.
\]

Denote by $\mathcal{S}_{p}^{(i)}$ the ideal generated by the
projection of $\mathcal{S}_{p}$ into $B_{p}^{(i)}$, and by
$\mathcal{J}^{(i)}$ the ideal $J^{(i)}+I+\Id_{R_{i}\left\langle
X_{p},\Sigma\right\rangle}(W)$.

\begin{lemma}\label{well definition of operators}
Let $X$ be any subset of $X_{p}$ of cardinality $m$ and $z$ be any
monomial consisting of variables of $\Sigma$. Then the following
actions on $R_{i}\left\langle X_{p},\Sigma\right\rangle $ preserve
the ideal $\mathcal{J}^{(i)}$ (note that here we do not insist in the decomposition of $X_{p}$ into the subsets $X_{i}$ we fixed above).
\begin{enumerate}
\item The Zubrilin-Razmyslov traces $\delta_{k}^{z}=\delta_{k}^{z}|_{X}$, where $k=0,\ldots,m$.
\item The alternation of the set $X$ which is defined by
\[
Alt_{X}(f)=\sum_{\sigma\in S_{X}}(-1)^{\sigma}f|_{\forall x\in X:\,\sigma(x)\rightarrow x}.
\]

\item Zero substitution on elements of $X$. That is
\[
F_{X}(f)=f|_{\forall x\in X:\,0\rightarrow x}.
\]

\end{enumerate}
\end{lemma}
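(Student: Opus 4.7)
The strategy is to realize each of the three operators as an $F$-linear combination of $R_i$-algebra endomorphisms of $R_i\langle X_p,\Sigma\rangle$ and then to verify separately that each such endomorphism preserves the three constituent ideals of $\mathcal{J}^{(i)}=J^{(i)}+I+\Id_{R_i\langle X_p,\Sigma\rangle}(W)$. Concretely, $F_X$ is itself the endomorphism $\phi_0$ that sends every $x\in X$ to $0$ and fixes all remaining generators; $Alt_X=\sum_{\sigma\in S_X}(-1)^{\sigma}\phi_\sigma$ where $\phi_\sigma$ is the permutation endomorphism of $X$ extended by the identity elsewhere; and $\delta_k^z|_X=\sum_{S\subseteq X,\,|S|=k}\phi_{S,z}$ where $\phi_{S,z}$ sends $x\mapsto zx$ for $x\in S$ and fixes every other generator. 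The endomorphism $\phi_{S,z}$ is well defined precisely because $z$ is a monomial in $\Sigma$, hence disjoint from $X_p$. Since a subspace is preserved by a linear combination of operators whenever each summand preserves it, it is enough to check that each of these endomorphisms carries every generator of $\mathcal{J}^{(i)}$ back into $\mathcal{J}^{(i)}$.

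For $\Id_{R_i\langle X_p,\Sigma\rangle}(W)$ the point is general: any $R_i$-algebra endomorphism $\phi$ of $R_i\langle X_p,\Sigma\rangle$ sends an evaluation $f(v_1,\dots,v_k)$ with $f\in\Id(W)$ to the evaluation $f(\phi(v_1),\dots,\phi(v_k))$, which again lies in the same T-ideal. For $I$, whose generators are $x^2$ and $xwx$ with $x\in X_p$, a case check suffices: if $x\in X$ then $\phi_0$ kills the generator outright; otherwise the image still contains some variable of $X_p$ at least twice and therefore remains in $I$. For instance $\phi_{S,z}(xwx)$ equals $zx\,\phi_{S,z}(w)\,zx$, which rewrites as $z\cdot(xw'x)$ with $w'=\phi_{S,z}(w)z$ and so lies in $I$ when $x\in S$, and equals $x\,\phi_{S,z}(w)\,x\in I$ otherwise. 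The permutation endomorphisms $\phi_\sigma$ are handled analogously by replacing $x$ with $\sigma(x)$.

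The case of $J^{(i)}$ is the easiest. Each generator $g_j=1\otimes b_j^{n+1}+\sum_{k=1}^{n}(-1)^k\theta_k^{(j)}\otimes b_j^{n-k+1}$ lies in the subalgebra $R_i\langle\Sigma\rangle$, since $b_j\in Y_0$ is represented by a monomial in $\Sigma$ alone. Because every endomorphism occurring in the three decompositions fixes $R_i$ and $\Sigma$ pointwise and touches only variables in $X\subseteq X_p$, one gets $\phi(g_j)=g_j$, whence $\phi(a\,g_j\,b)=\phi(a)\,g_j\,\phi(b)\in J^{(i)}$ for every $a,b\in R_i\langle X_p,\Sigma\rangle$.

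The main subtlety is essentially bookkeeping: one must confirm that in the decomposition of $\delta_k^z|_X$ the auxiliary variable $z$ really is disjoint from $X$ (so that $\phi_{S,z}$ is genuinely an $R_i$-algebra endomorphism), and that the elements $b_j$ generating $J^{(i)}$ are free of $X_p$. Both facts are built into the hypotheses of the lemma, so no real obstacle arises, and the proof reduces to the three elementary verifications above.
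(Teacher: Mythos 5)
Your proof is correct and follows essentially the same route as the paper: reduce by linearity to the three constituent ideals, use that $\Id_{R_i\langle X_p,\Sigma\rangle}(W)$ is stable under substitutions, that the generators of $J^{(i)}$ involve only $\Sigma$-variables and scalars $\theta_k^{(j)}$ and are therefore untouched, and that the images of the generators $xwx$, $x^2$ of $I$ still have a repeated $X_p$-variable (or vanish). Your explicit packaging of $F_X$, $Alt_X$ and $\delta_k^z|_X$ as $F$-linear combinations of $R_i$-algebra endomorphisms is just a cleaner formulation of the substitution-by-substitution check carried out in the paper, so no further comment is needed.
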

\begin{proof}
Since the actions above are linear, it is sufficient to check that
they preserve the ideals $\Id_{R_{i}\left\langle
X_{p},\Sigma\right\rangle}(W),I$ and $J^{(i)}$.

Since $\Id_{R_{i}\left\langle X_{p},\Sigma\right\rangle}(W)$ is a
$T$-ideal it is clearly preserved by each of the actions defined
in $(1)-(3)$. Next we turn to the ideal $J^{(i)}$, starting with (3).
Suppose $g,h\in R_{i}\left\langle X_{p},\Sigma\right\rangle $ are
monomials. Then for any $j=1,\dots,i$ and any nonnegative integer
$k$, $F_{X}\left(gb_{j}^{k}h\right)=0$ if some variable of $X$ is
in $gh$ whereas $F_{X}\left(gb_{j}^{k}h\right)=gb_{j}^{k}h$ if no
variable of $X_{p}$ is in $gh$. Hence
\[
F_{X}\left(1\otimes g\left(1\otimes b_{j}^{n+1}+\sum_{k=1}^{n}(-1)^{k}\theta_{k}^{(j)}\otimes b_{j}^{n-k+1}\right)1\otimes h\right)
\]
is zero or
$1\otimes g\left(1\otimes b_{j}^{n+1}+\sum_{k=1}^{n}(-1)^{k}\theta_{k}^{(j)}\otimes b_{j}^{n-k+1}\right)1\otimes h$
which are elements of $J^{(i)}$.

Let us show $Alt|_{X}$ preserves $J^{(i)}$. Since the element
$1\otimes b_{j}^{n+1}+\sum_{k=1}^{n}(-1)^{k}\theta_{k}^{(j)}\otimes b_{j}^{n-k+1}$
does not contain any variable of $X$, we have for $j=1,\ldots,i$
\[
Alt|_{X}\left(1\otimes g\left(1\otimes b_{j}^{n+1}+\sum_{k=1}^{n}(-1)^{k}\theta_{k}^{(j)}\otimes b_{j}^{n-k+1}\right)1\otimes h\right)
\]
is a sum of polynomials of the type
\[
1\otimes g_{\sigma}\left(1\otimes b_{j}^{n+1}+\sum_{k=1}^{n}(-1)^{k}\theta_{k}^{(j)}\otimes b_{j}^{n-k+1}\right)1\otimes h_{\sigma}\in J^{(i)}.
\]
Finally, a similar reasoning shows that $\delta_{k}^{z}$ also preserves
$J^{(i)}$.

We now turn to show the invariance of the ideal $I$. Consider
elements $1\otimes g,1\otimes h,1\otimes w\in R_{i}\otimes
F\left\langle X_{p},\Sigma\right\rangle$ where $g,h,w$ are
monomials. Let $x_{0} \in X$. As we mentioned above the action of
$F_{X}$ on $1\otimes g$ is either zero or $1\otimes g$. Hence,
since $I$ is generated by elements in $\mathcal{W}$ represented by
monomials in $R_{i}\langle X_{p}, \Sigma \rangle$, it is evident
that $I$ is preserved by $F_{X}$. The result of acting with
$Alt_{X}$ or $\delta_{k}^{z}$ on an element of the form $1\otimes
gx_{0}^{2}h$ or $1\otimes gx_{0}wx_{0}h$, is a sum of monomials each
having some element of $X_{p}$ appearing at least twice.
Therefore, $I$ is preserved by $\delta_{k}^{z}$ and $Alt_{X}$.
\end{proof}
For future reference we record the conclusion here.

\begin{corollary}{\label{Well definition of actions}}

The operations considered above $($on $R_{i}\otimes F\langle X_{p}, \Sigma \rangle$$)$, namely $\delta_{k,
X_{l}}^{b_{i}}, Alt_{X_{l}}$, $l=1,\ldots,\mu+r+1$ and $F_{X}$,
where $X=\{x\}$ and $x \in X_{p}$, determine well defined actions on $B_{p}^{(i)}$,
$($i=1,\ldots,t$)$.

\end{corollary}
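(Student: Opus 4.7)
The plan is to deduce this corollary essentially as a reformulation of the preceding Lemma \ref{well definition of operators}. Each operation under consideration, namely $\delta_{k,X_l}^{b_i}$, $Alt_{X_l}$, and $F_X$, is an $F$-linear endomorphism of $R_i\otimes F\langle X_p,\Sigma\rangle$ that acts only on the tensor factor $F\langle X_p,\Sigma\rangle$ and leaves the polynomial coefficients $\theta_j^{(m)}\in R_i$ untouched. Hence each extends uniquely to an $R_i$-linear operator on $R_i\otimes F\langle X_p,\Sigma\rangle$. To produce well defined induced actions on the quotient $B_{p}^{(i)} = R_i\otimes F\langle X_p,\Sigma\rangle/\mathcal{J}^{(i)}$, it therefore suffices to check that each operation maps the ideal $\mathcal{J}^{(i)} = J^{(i)} + I + \Id_{R_{i}\langle X_{p},\Sigma\rangle}(W)$ into itself.

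First I would invoke the elementary fact that the sum of invariant subspaces is invariant, which reduces the problem to verifying stability of each summand. But this is exactly the content of Lemma \ref{well definition of operators}: stability of $\Id_{R_{i}\langle X_{p},\Sigma\rangle}(W)$ is the $T$-ideal property, stability of $J^{(i)}$ is the observation that its distinguished generators $1\otimes b_{j}^{n+1} + \sum_{k=1}^{n}(-1)^{k}\theta_{k}^{(j)}\otimes b_{j}^{n-k+1}$ are free of the variables in $X_{p}$ (so $Alt_{X_l}$ and $F_X$ act either trivially or by preserved pre/post-multiplication, and $\delta_{k}^{z}$ threads through cleanly), and stability of $I$ follows because $I$ is generated by elements in which some variable of $X_{p}$ appears at least twice, a feature clearly preserved by the three families of operations (alternation keeps repetitions, zero-substitution either kills the element or fixes it, and the $\delta$-operator only inserts extra copies of $b_i$, never deleting $X_p$-occurrences).

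Combining these invariance statements with the universal property of the quotient, we conclude that each of $\delta_{k,X_l}^{b_i}$, $Alt_{X_l}$, and $F_X$ descends to a well defined operator on $B_{p}^{(i)}$, which is precisely the claim of the corollary. I do not expect any genuine obstacle here, since all the substantive checking was already absorbed into the preceding lemma; the one point worth flagging in the write-up is that the argument applies uniformly for every $i=1,\ldots,t$, because $\mathcal{J}^{(i)}$ is built inductively and at each stage the newly adjoined Cayley-Hamilton style generator is again free of $X_{p}$, so the invariance verification carries over verbatim from the lemma.
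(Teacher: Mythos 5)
Your argument is correct and is essentially the paper's own: the corollary is recorded as an immediate consequence of Lemma \ref{well definition of operators}, since invariance of the ideal $\mathcal{J}^{(i)}=J^{(i)}+I+\Id_{R_{i}\langle X_{p},\Sigma\rangle}(W)$ under the linear operators $\delta_{k,X_l}^{b_i}$, $Alt_{X_l}$ and $F_X$ is exactly what lets them descend to the quotient $B_{p}^{(i)}$. Your added remark that the newly adjoined generators at each stage $i$ are free of $X_{p}$, so the verification is uniform in $i$, matches the paper's treatment.
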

\begin{lemma}\label{The-following-holds(1)}The following holds:
\begin{enumerate}
\item $B_{p}=B_{p}^{(t)}$ is representable.
\item $\mathcal{S}_{p}$ is mapped isomorphically into $B_{p}^{(i)}$ for
$i=1,\ldots,t$.
\end{enumerate}
\end{lemma}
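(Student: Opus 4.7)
For part (1), the plan is to verify the hypotheses of Beidar's theorem (Theorem \ref{representable-over-commutative}). The coefficient ring $R_t = F[\theta^{(j)}_k : 1 \le j \le t,\ 1 \le k \le n]$ is a polynomial algebra, hence a unital commutative Noetherian $F$-algebra, so it suffices to exhibit $B_p = B_p^{(t)}$ as a finite module over $R_t$. By Shirshov's theorem the affine algebra $\mathcal{W}$ has a Shirshov base $Y = Y_0 \sqcup Y_1$. In $\mathcal{U}$, and hence in every $B_p^{(i)}$, each element of $\overline{Y}_1$ is nilpotent (any monomial involving the same $x \in X_p$ twice lies in $I$), so such elements are integral over $F$. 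Each $\bar b_i \in \overline{Y}_0$ is by construction a root of the monic polynomial $\lambda^{n+1} + \sum_{k=1}^n (-1)^k\theta^{(i)}_k \lambda^{n-k+1}$ over $R_t$, hence integral over $R_t$. Lemma \ref{finite-module} then gives that $B_p$ is finite as an $R_t$-module, and Beidar's theorem yields representability.

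For part (2), I would argue by induction on $i$. The base step $i = 0$ asks for $\mathcal{S}_p \cap I = 0$ in $\mathcal{W}$. Since every admissible evaluation is multilinear in each variable of $X_p$ (each such variable appears exactly once), while every generator $xwx$ or $x^2$ of $I$ is multihomogeneous of degree $\ge 2$ in some $x \in X_p$, the multigrading by $X_p$ separates $\mathcal{S}_p$ from $I$: the multilinear-in-$X_p$ component of any element of $\tilde I$ already lies in $\Id_{F\langle X_p,\Sigma\rangle}(W)$, so it vanishes in $\mathcal{W}$.

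For the inductive step, assuming $\mathcal{S}_p \hookrightarrow A := B_p^{(i-1)}$, the main tool is the Interpretation Lemma \ref{lem:interpretation}. I would take $L$ to be the two-sided ideal of $A$ generated by $\mathcal{S}_p$ and define an $F$-linear action of $R = F[\theta^{(i)}_1,\dots,\theta^{(i)}_n]$ on $L$ by $\theta^{(i)}_k \cdot u = \delta^{b_i}_k|_{X_{\mu+1}}(u)$, where the distinguished small set $X_{\mu+1}$ plays the role of the alternating set. The well-definedness and $A$-bimodule compatibility of this action follow from Corollary \ref{Well definition of actions} (the Zubrilin operators preserve $\mathcal{J}^{(i-1)}$) together with the fact that the variables of $X_{\mu+1}$ can be arranged to occur only inside the admissible evaluations, so $\delta^{b_i}_k|_{X_{\mu+1}}$ commutes with left and right multiplication by elements of $A$. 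By the Cayley-Hamilton-type identity of Corollary \ref{final expression for the use of interpretation} applied with $p = b_i$, every admissible evaluation $u$ satisfies
\[
\bar b_i^{n+1} u + \sum_{k=1}^n (-1)^k \theta^{(i)}_k \cdot (\bar b_i^{n-k+1} u) \equiv 0 \pmod{\mathcal{J}^{(i-1)}},
\]
and analogously on the other side, so the generators of $J_i$ all lie in the kernel of the projection $R \otimes A \to A' := R \otimes A / (\theta^{(i)}_k \otimes a - 1 \otimes (\theta^{(i)}_k\cdot a) : a \in L)$. This gives a factorization $A \to B_p^{(i)} \to A'$ whose composition is the embedding provided by the Interpretation Lemma; hence the first map $A \to B_p^{(i)}$ is injective on $A$, and a fortiori on $\mathcal{S}_p$.

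The main obstacle will be verifying bimodule compatibility of the $R$-action on $L$: this requires making precise that the alternating set $X_{\mu+1}$ is truly ``internal'' to every element of $L$, and that extending the action from $\mathcal{S}_p$ (where the Zubrilin operator is visibly meaningful) to the ideal $L$ it generates does not depend on the representation $u = \sum x_j v_j y_j$ of an element as a sum of products with admissible $v_j$. This is exactly the content of Lemma \ref{well definition of operators}/Corollary \ref{Well definition of actions}, invoked at the level of the relatively free algebra before passing to $B_p^{(i-1)}$.
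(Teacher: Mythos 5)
Part (1) and your base case $\mathcal{S}_p\cap I=0$ are fine and agree with the paper's argument (integral Shirshov base over $R_t$, then Lemma \ref{finite-module} and Theorem \ref{representable-over-commutative}; multihomogeneity in the $X_p$-variables for the base case). The inductive step, however, has a genuine gap. You claim that the generators of $J_i$ die under the projection $R\otimes A\to A'=R\otimes A/(\theta^{(i)}_k\otimes a-1\otimes(\theta^{(i)}_k\cdot a):a\in L)$, so that the quotient factors as $A\to B_p^{(i)}\to A'$ and injectivity of $A\to A'$ forces injectivity of $A\to B_p^{(i)}$. But $J_i$ is the two-sided ideal generated by the single element $\bar B=1\otimes\bar b_i^{\,n+1}+\sum_{k=1}^n(-1)^k\theta^{(i)}_k\otimes\bar b_i^{\,n-k+1}$, and the interpretation relations are imposed only for $a\in L$, the ideal generated by $\mathcal{S}_p$; since $\bar b_i^{\,n-k+1}$ is a power of a Shirshov-base monomial free of $X_p$-variables, it does not lie in $L$, so there is no reason for $\bar B$ itself (hence for $J_i$) to map to zero in $A'$. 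What you actually verify is only that elements of the special form $\bar B\cdot(1\otimes u)$ (or $(1\otimes u)\cdot\bar B$) with $u$ an admissible evaluation are killed; a general element of $J_i$ is $\sum_j c_j\bar B d_j$ with arbitrary $c_j,d_j\in R\otimes A$, and your relations cannot touch it. Consequently the asserted factorization, and a fortiori the (much too strong) conclusion that all of $A=B_p^{(i-1)}$ embeds in $B_p^{(i)}$, are unjustified; if that were true the whole delicate construction would be unnecessary.

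The paper's proof is correspondingly more roundabout and contains two steps your proposal omits, and they are exactly what makes the interpretation usable. One takes a specific $\bar f\in\mathcal{S}_p$ with $1\otimes\bar f\in J_i$, writes $1\otimes\bar f=\sum_j(v_j\otimes\bar g_j)\bar B(1\otimes\bar h_j)$, and first (i) applies the zero-substitution operators $F_{\{x\}}$, $x\in X_p$, to arrange that every variable of $X_p$ occurs exactly once in each $g_jh_j$, and then (ii) applies the alternations $Alt_{X_1},\ldots,Alt_{X_{\mu+r+1}}$ to both sides (these operators are well defined on $B_p^{(i-1)}$ by Corollary \ref{Well definition of actions}). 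The left side becomes $1\otimes\alpha\bar f$ with $\alpha\neq 0$, while on the right each term becomes, for each $k$, an expression $\sum_\sigma\bar g_{j,\sigma}\bar b^{\,n-k+1}\bar h_{j,\sigma}$ which now \emph{does} lie in $\mathcal{S}_p^{(i-1)}$; only then can the relations $\theta_k\otimes q=1\otimes\delta_k^{\bar b}(q)$ be applied, and Corollary \ref{final expression for the use of interpretation} (with the powers of $\bar b$ inserted in the middle position $x_{n+1}$, not multiplied on the outside of $u$ as in your displayed identity) kills each term, whence $\bar f=0$ in $B_p^{(i-1)}$ by the Interpretation Lemma. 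Without the $F_{\{x\}}$ and alternation preprocessing, and without restricting attention to elements of $(1\otimes\mathcal{S}_p)\cap J_i$ rather than all of $J_i$, the argument does not go through.
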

\begin{proof}
By construction, the algebra $B_{p}$ has an integral Shirshov base
over the polynomial algebra $R_{t}$ and hence is representable by
Lemma \ref{finite-module} and Theorem \ref{representable-over-commutative}.
This proves the first part of the lemma. The second part however requires
more work.

We proceed by induction on $i$. Since $\mathcal{S}_{p}$ embeds in
$\mathcal{U}$ we have the result for $i=0$. Let $i>0$ and suppose
$(2)$ holds for $i-1$, namely $\mathcal{S}_{p}$ embeds in $B_{p}^{(i-1)}$.
We denote the image in $B_{p}^{(i-1)}$ again by $\mathcal{S}_{p}$.

Let $f=f(X_{p},\Sigma)$ be an admissible evaluation of a Kemer
polynomial of $W$. Suppose $\overline{f}\in J_{i}$. We need to
show $\overline{f}$ is zero in $B_{p}^{(i-1)}$.

By the definition of $J_{i}$, $1\otimes \overline{f}$ has the form

\begin{equation}
1\otimes \overline{f}=\sum_{j}v_{j}\otimes
\bar{g}_{j}\cdot\underset{\bar{B}}{\underbrace{\left(1\otimes
\bar{b}^{n+1}+\sum_{k=1}^{n}(-1)^{k}\theta_{k}\otimes\bar{b}^{n-k+1}\right)}}1\otimes\bar{h}_{j}\label{equate1}
\end{equation}
where $g_{j},h_{j}\in R_{i-1}\otimes F\left\langle
X_{p},\Sigma\right\rangle $ are monomials in the variables of
$\Sigma \sqcup X_{p}$ (with coefficients in $R_{i-1}$) and
$v_{j}\in F\left[\theta_{1},\ldots,\theta_{n}\right]$. Here,
$b=b_{i},\theta_{1}=\theta_{1}^{(i)},\ldots,\theta_{n}=\theta_{n}^{(i)}$
and $B= \left(1\otimes
b^{n+1}+\sum_{k=1}^{n}(-1)^{k}\theta_{k}\otimes b^{n-k+1}\right)$.
Note that with this notation the equality in (2) takes place in
$F\left[\theta_{1},\ldots,\theta_{n}\right]\otimes B_{p}^{(i-1)}$.

We claim that we may assume that for every $j$ in the summation
above the elements $g_{j}$ and $h_{j}$ are such that all variables
of $X_{p}$ appear exactly once in either $g_{j}$ or $h_{j}$ (but
not in both). Indeed, suppose $A_{x}$ is the set of all indexes
$j$ such that $g_{j}h_{j}$ contains the variable $x\in X$.
Applying $F_{\{x\}}$ to $\bar{f}-\sum_{j\in
A_{x}}(v_{j}\otimes\bar{g}_{j})\cdot\bar{B}\cdot
(1\otimes\bar{h}_{j})$ yields zero whereas the action of
$F_{\{x\}}$ on the remaining terms is trivial and so can be
ignored in the sum above. Repeating this process to all the
variables of $X_{p}$ we obtain that (we may assume) each variable
of $X_{p}$ appears in every $g_{j}h_{j}$. The claim is now proved
since modding by $I$ garanties that if there is a variable of
$X_{p}$ that appears more than once in some $(1\otimes g_{j})\cdot
B\cdot (1\otimes h_{j})$, then
$(1\otimes\bar{g}_{j})\bar{B}(1\otimes \bar{h}_{j})=0$.

Now we wish to apply $Alt_{X}$ for $X=X_{1},\ldots,X_{\mu+r+1}$ on
the two sides of (\ref{equate1}). To this end consider for each
$j$ the polynomial

$$M_{j}=(v_{j}\otimes g_{j})B(1\otimes h_{j})\in F[\theta_{1},\ldots,\theta_{n}]\otimes (R_{i-1}\otimes F\left\langle X_{p},\Sigma\right\rangle)$$
representing

$$\overline{M}_{j}=(v_{j}\otimes \bar{g}_{j})\bar{B}(1\otimes \bar{h}_{j})\in F[\theta_{1},\ldots,\theta_{n}]\otimes B_{p}^{(i-1)}.$$

Alternating all variables of $X_{p}$ (with respect to the fixed decomposition into small and big sets) in $M_{j}$ we obtain

$$
L_{j}=\sum_{\sigma}M_{j,\sigma}
$$
and clearly $\overline{L}_{j}=\sum_{\sigma}\overline{M}_{j,\sigma}
\in F[\theta_{1},\ldots,\theta_{n}]\otimes B_{p}^{(i-1)}$ (to
simplify notation we include the sign arising from alternation in
$M_{j,\sigma}$).

Now, by Corollary \ref{Well definition of actions} we know that alternation of elements of $X_{p}$ is well defined on $B_{p}^{(i-1)}$, so we obtain

$$
\overline{L}_{j}=\sum_{\sigma}\overline{M}_{j,\sigma}=1\otimes \alpha \overline{f}
$$
where $\alpha = (n!)^{\mu + 1}
((n+1)!)^{r}$.

Now, we wish to apply the interpretation lemma (Lemma
\ref{lem:interpretation}) on the algebra
$F[\theta_{1},\ldots,\theta_{n}]\otimes B_{p}^{(i-1)}$ with the
ideal $\mathcal{S}_{p}^{(i-1)} \subseteq B_{p}^{(i-1)}$ (see
notation above). Indeed, we will provide an interpretation of
$\theta_{1},\ldots,\theta_{n}$ via a suitable action on the ideal
$\mathcal{S}_{p}^{(i-1)}$ such that the image of
$\overline{L}_{j}$ (modulo the interpretation) vanishes and hence
also the image of $\sum_{j}\overline{L}_{j}$, whereas the elements
of $B_{p}^{(i-1)}$ are mapped isomorphically. This will imply that
$\bar{f}=0$ in $B_{p}^{(i-1)}$.

Recall once again (Corollary \ref{Well definition of actions}) that the operation
$\delta_{k}^{b}$ is well defined on $B_{p}^{(i-1)}$ ($b$ is free
of variables of $X_{p}$) so we may set
$$\theta_{k}(q)=\delta_{k}^{b}(q), \space q \in B_{p}^{(i-1)}.$$
However as shown in the lemma below, only the restriction of this
action on the ideal $\mathcal{S}_{p}^{(i-1)}$ commutes with the
$B_{p}^{(i-1)}$-bimodule structure.

\begin{lemma} \label{check conditions of the interpretation lemma}

The following hold:
\begin{enumerate}
\item For every $h\in\mathcal{S}_{p}$, any $k$ and any $x,y\in B_{p}^{(i-1)}$

\[
\delta_{k}^{\bar{b}}(xh)=x\delta_{k}^{\bar{b}}(h),\delta_{k}^{\bar{b}}(hy)=\delta_{k}^{\bar{b}}(h)y
\]

\item for any $k,s=1,\ldots,n$ and $h\in\mathcal{S}_{p}$

\[
\delta_{k}^{\bar{b}}\delta_{s}^{\bar{b}}(h)=\delta_{s}^{\bar{b}}\delta_{k}^{\bar{b}}(h)
\]

\end{enumerate}

\end{lemma}

\begin{proof}
Since $b$ is a monomial having its variables in $\Sigma$, Lemma \ref{delta_commutative}
implies that $\delta_{k}^{b}$ and $\delta_{s}^{b}$ commute, so the
same holds for $\delta_{k}^{\bar{b}}$ and $\delta_{s}^{\bar{b}}$.

To complete the proof of the lemma it suffices to show that if
$h\in\mathcal{S}_{p}$ and $g$ is a monomial in $B_{p}^{(i-1)}$
then $\delta_{k}^{\bar{b}}(gh)=g\delta_{k}^{\bar{b}}(h)$ and
$\delta_{k}^{\bar{b}}(hg)=\delta_{k}^{\bar{b}}(h)g$. Indeed, if
$g$ contains an element of $X_{p}$, then $hg=0=gh$ and also
$g\delta_{k}^{\bar{b}}(h)=0=\delta_{k}^{\bar{b}}(h)g$ verifying
the condition. If $g$ is a monomial in $B_{p}^{(i-1)}$ free of
variables of $X_{p}$, then clearly
$\delta_{k}^{\bar{b}}(hg)=\delta_{k}^{\bar{b}}(h)g$ and
$\delta_{k}^{\bar{b}}(gh)=g\delta_{k}^{\bar{b}}(h)$. This
completes the proof of Lemma \ref{check conditions of the
interpretation lemma}.

\end{proof}

We apply the interpretation lemma on the element $\overline{L}_{j}=\sum_{\sigma}\overline{M}_{j,\sigma} \in F[\theta_{1},\ldots,\theta_{n}]\otimes B_{p}^{(i-1)}$.

We obtain

$$
\sum_{\sigma}\overline{M}_{j,\sigma}= \sum_{\sigma}(v_{j}\otimes \bar{g}_{j,\sigma})\bar{B}(1\otimes \bar{h}_{j,\sigma})=
$$

$$
\sum_{\sigma}(v_{j}\otimes \bar{g}_{j,\sigma})\left(1\otimes \bar{b}^{n+1}+\sum_{k=1}^{n}(-1)^{k}\theta_{k}\otimes\bar{b}^{n-k+1}\right)(1\otimes \bar{h}_{j,\sigma})=
$$
putting $\theta_{0}=1$ we have
$$
\sum_{\sigma}(v_{j}\otimes \bar{g}_{j,\sigma})\left(\sum_{k=0}^{n}(-1)^{k}\theta_{k}\otimes\bar{b}^{n-k+1}\right)(1\otimes \bar{h}_{j,\sigma})=
$$

$$
\sum_{k=0}^{n}(-1)^{k}v_{j}\theta_{k} \otimes \sum_{\sigma}\bar{g}_{j,\sigma}\bar{b}^{n-k+1}\bar{h}_{j,\sigma}
$$

Now, since $\sum_{\sigma}\bar{g}_{j,\sigma}\bar{b}^{n-k+1}\bar{h}_{j,\sigma}\in \mathcal{S}_{p}^{(i-1)}$, we may apply the interpretation

$$\theta_{k}\otimes q = 1 \otimes \delta_{k}^{\bar{b}}(q), k=0,\ldots,n$$ (note that $\delta_{0}$ is the identity map)
and obtain

$$
v_{j} \otimes \sum_{k=0}^{n}(-1)^{k}\delta_{k}^{\bar{b}}\left(\sum_{\sigma}\bar{g}_{j,\sigma}\bar{b}^{n-k+1}\bar{h}_{j,\sigma}\right)
$$
which vanishes by Corollary \ref{final expression for the use of interpretation}.

As mentioned above, by the interpretation lemma the map

$$
B_{p}^{(i-1)} \rightarrow F[\theta_{1},\dots,\theta_{n}]\otimes B_{p}^{(i-1)}/(\theta_{k}\otimes q-1\otimes \delta_{k}^{\bar{b}}(q)|q\in \mathcal{S}_{p}^{(i-1)},\, i=1,\ldots,n)
$$
is an embedding and hence $\bar{f}=0$ in $B_{p}^{(i-1)}$. This completes the proof of Lemma \ref{The-following-holds(1)}.
\end{proof}

Thus we have a map $\phi: \mathcal{W} \rightarrow B_{p}$ where
$B_{p}$ is a representable algebra, $\Id(B_{p}) \supseteq \Id(W)$
and such that the space $\mathcal{S}_{p}$ is mapped
isomorphically. Consequently we have the following corollary.

\begin{corollary}\label{Kemer polynomials on the representable algebra}
Let $f$ be any Kemer polynomial of the algebra $W$ (at least $\mu+1$ small sets). Then $f \notin \Id(B_{p})$.

\end{corollary}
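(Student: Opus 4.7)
The plan is, for any Kemer polynomial $f$ of $W$ with at least $\mu+1$ small sets, to exhibit a nonzero admissible evaluation $\tilde f$ of $f$ inside $\mathcal{W}$; since $\tilde f\in\mathcal{S}_p$ by construction, Lemma \ref{The-following-holds(1)}(2) will then force its image in $B_p$ to be nonzero, which realizes $f$ as a nonidentity of $B_p$.

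The candidate evaluation is the natural one. Choose $\mu+1$ of the small alternating sets of $f$ and its $r$ big alternating sets, and define a substitution that sends these sets bijectively onto $X_1,\ldots,X_{\mu+1}$ and $X_{\mu+2},\ldots,X_{\mu+1+r}$ respectively (the cardinalities match by the very choice of $X_p$), while sending each remaining variable of $f$ to a distinct, previously unused generator from $\Sigma$. Let $\tilde f\in \mathcal{W}$ be the element obtained from $f$ by this substitution. By the definition of admissibility, $\tilde f$ represents an admissible evaluation of $f$ and hence lies in $\mathcal{S}_p$.

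The only step requiring genuine argument is to check that $\tilde f\neq 0$ in $\mathcal{W}$. The substitution is an injective map from the variables of $f$ into $X_p\sqcup\Sigma$, so it extends to an injective algebra homomorphism $F\langle V_f\rangle\hookrightarrow F\langle X_p,\Sigma\rangle$ on the free algebras, where $V_f$ denotes the variable set of $f$. Because $\Id(W)$ is a $T$-ideal (in particular invariant under renaming of variables), this embedding descends to an injection of the relatively free algebras $F\langle V_f\rangle/\Id_{F\langle V_f\rangle}(W)\hookrightarrow\mathcal{W}$. Since $f\notin \Id(W)$, its class is nonzero in the source, and consequently $\tilde f\neq 0$ in $\mathcal{W}$. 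This verification is the only subtle point in the argument; once a nonzero element of $\mathcal{S}_p$ is in hand, the embedding $\mathcal{S}_p\hookrightarrow B_p$ provided by Lemma \ref{The-following-holds(1)}(2) immediately yields $f\notin \Id(B_p)$.
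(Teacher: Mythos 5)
Your overall strategy is the paper's: produce a nonzero admissible evaluation $\bar f\in\mathcal{S}_p$ and then invoke Lemma \ref{The-following-holds(1)}(2) to conclude $\bar f\notin\ker\phi$, hence $f\notin\Id(B_p)$. The gap is in how you produce it. Your evaluation sends ``each remaining variable of $f$ to a distinct, previously unused generator from $\Sigma$,'' and your nonvanishing argument depends essentially on this injectivity (you need the variable map to extend to an embedding of free algebras, hence of relatively free algebras). But $\Sigma$ is a \emph{fixed finite} set, chosen once at the start of the section, whereas the Kemer polynomials covered by the corollary are arbitrary Kemer polynomials with \emph{at least} $\mu+1$ small sets: they may have arbitrarily many small sets and arbitrarily many extra variables in $Q$, so in general there are not enough unused generators in $\Sigma$ to rename the remaining variables injectively. (Enlarging $\Sigma$ is not an option here, since $\mathcal{W}$ must stay affine for the Shirshov-base/representability machinery, and admissibility forces the non-designated variables to take values in $F\langle\Sigma\rangle$.) So as written the construction of $\tilde f$ need not exist, and with a non-injective renaming your nonvanishing argument collapses.

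The paper avoids this by using that $\mathcal{W}_0=F\langle\Sigma\rangle/\Id_{F\langle\Sigma\rangle}(W)$ is an affine \emph{relatively free} algebra of $W$, i.e.\ $\Id(\mathcal{W}_0)=\Id(W)$ (this is exactly what the Capelli-based finite-generation theorem of Section \ref{Finite-generation-of} buys, and it is the reason $\Sigma$ was chosen with enough variables). Since $f\notin\Id(W)=\Id(\mathcal{W}_0)$, $f$ has a nonzero evaluation on $\mathcal{W}_0$, with the remaining variables taking values that are arbitrary elements of $F\langle\Sigma\rangle$ rather than distinct variables. One then replaces the values of the designated $\mu+1$ small sets and the $r$ big sets by the fresh free variables of $X_p$ (bijectively); specializing $X_p$ back to the original values is an endomorphism of $F\langle X_p,\Sigma\rangle$ preserving $\Id_{F\langle X_p,\Sigma\rangle}(W)$, so the resulting admissible evaluation is still nonzero in $\mathcal{W}$ and lies in $\mathcal{S}_p$. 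From that point your concluding step via the embedding of $\mathcal{S}_p$ into $B_p$ is correct. In short: replace the injective renaming of the non-designated variables by an arbitrary nonzero evaluation of them in $F\langle\Sigma\rangle$, justified by $\Id(\mathcal{W}_0)=\Id(W)$; only the designated small and big sets get sent to the variables $X_p$.
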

\begin{proof}
Since $\mathcal{W}_{0}$ is an affine relatively free algebra of
$W$, there exits an evaluation of $f$ on $\mathcal{W}_{0}$ which
is not zero. It follows that $f$ has a nonzero \textit{admissible}
evaluation $\bar{f}$ on $\mathcal{W}$ with $\bar{f} \in S_{p}$ and
hence $\bar{f} \notin ker(\phi)$. This proves the Corollary.

\end{proof}

\section{Representability - the proof}

We have all ingredients needed to prove the main theorem.
\begin{proof}
The proof is by induction on the Kemer index $p$ associated to a
$T$-ideal $\Gamma$ (containing a Capelli polynomial). If $p=(0,0)$
then $\Gamma=F\left\langle X\right\rangle $ and so $W=0$. Suppose
the theorem is true for any affine algebra with Kemer index
smaller than $p$. Denote by $S_{p}$ the $T$-ideal generated by all
Kemer polynomials corresponding to $\Gamma$, and let
$\Gamma'=\Gamma+S_{p}$. It is clear that the Kemer index of
$\Gamma'$ is strictly smaller than $p$. Hence, by the inductive
hypothesis there is a representable algebra $A'$ having $\Gamma'$
as its $T$-ideal of identities.

Let $B_{p}$ be the representable algebra constructed in
the previous section. We'll show $\Gamma=\Id(A'\times
B_{p})$.

It is clear that $\Gamma\subset \Id(A'\times B_{p})$ since $\Gamma$
is contained in $\Gamma'$ and by construction $\Gamma \subseteq
\Id(B_{p})$. Suppose there is $f\notin\Gamma$ with $f \in
\Id(A'\times B_{p})=\Id(A')\cap \Id(B_{p})$. Since $f\in \Id(A')=\Gamma'$, we may assume $f\in
S_{p}$. Using the Phoenix property, see Theorem \ref{Phoenix-property}, we
obtain a Kemer polynomial $f'$ (with at least $\mu+1$ small sets) such that $f'\in(f)$.
But by Corollary \ref{Kemer polynomials on the representable algebra}, $f \notin \Id(B_{p})$ and this contradicts our previous assumption on $f$.
This completes the proof.

\end{proof}

\end{document}